\newlist{myitemize}{itemize}{1}
\setlist[myitemize,1]{leftmargin = 0.5in}
\theoremstyle{plain}
\newtheorem{thm}{Theorem}[section]
\newtheorem*{thm*}{Theorem}
\newtheorem{cor}[thm]{Corollary}
\theoremstyle{definition}
\newtheorem{conj}[thm]{Conjecture}
\newtheorem{rem}[thm]{Remark}
\title{\textbf{\small{COUNTING THE NUMBER OF $n$-PERIODIC $\mathbb{Z}$-POINTS OF A DISCRETE DYNAMICAL SYSTEM WITH APPLICATIONS FROM ARITHMETIC STATISTICS, IV}}}
\author{\footnotesize{BRIAN KINTU}}
\date{\small{\textit{January 3, 2026}}}
\begin{document}
\maketitle
\begin{abstract}
\small{In this follow-up paper, we inspect a surprising relationship between the set of $n$-periodic points of a polynomial map $\varphi_{d, c}$ defined by $\varphi_{d, c}(z) = z^d + c$ for all $c, z \in \mathbb{Z}$ and the coefficient $c$, where $d>2$ is an integer and $n\geq 2$ is any fixed integer. As in \cite{BK3} we again wish to study counting problems which are inspired by the exciting advances on $n$-torsion point-counting in arithmetic statistics and on $n$-periodic point-counting in arithmetic dynamics. In doing so, we then first prove that for any prime $p\geq 3$ and for any fixed (period) $n\in \mathbb{Z}_{\geq 2}$, the average number of distinct $n$-periodic integral points of any $\varphi_{p, c}$ modulo $p$ is unbounded or zero as $c$ tends to infinity; and so the average behavior here coincide with the the average behavior of the number of distinct fixed integral points of any $\varphi_{p, c}$ modulo $p$ in \cite{BK3}. Inspired further by a conjecture of Hutz on $n$-periodic rational points of any $\varphi_{p-1, c}$ for any prime $p\geq 5$ in arithmetic dynamics, we then also prove that for any fixed (period) $n\in \mathbb{Z}_{\geq 2}$, the average number of distinct $n$-periodic integral points of any $\varphi_{p-1, c}$ modulo $p$ is $1$ or $2$ or $0$ as $c\to \infty$; and so the average behavior here also coincide with the average behavior of the number of distinct fixed integral points of any $\varphi_{p-1, c}$ modulo $p$ in \cite{BK1}. Finally, we apply density, polynomial-counting, number field-counting, and Sato-Tate equidistribution results from arithmetic statistics, and thereby obtaining a stream of counting and statistical results on the irreducible polynomials, number fields, and on Artin $L$-functions that arise naturally in our polynomial discrete dynamical settings.}
\end{abstract}

\begin{center}
\tableofcontents
\end{center}

\begin{center}
    \section{Introduction}\label{sec1}
\end{center}
\noindent
Given any morphism $\varphi: {\mathbb{P}^N(K)} \rightarrow {\mathbb{P}^N(K)} $ of degree $d \geq 2$ defined on a projective space ${\mathbb{P}^N(K)}$ of dimension $N$, where $K$ is a number field. Then for any $n\in\mathbb{Z}$ and $\alpha\in\mathbb{P}^N(K)$, we then call $\varphi^n = \underbrace{\varphi \circ \varphi \circ \cdots \circ \varphi}_\text{$n$ times}$ the $n^{th}$ \textit{iterate of $\varphi$} and call $\varphi^n(\alpha)$ the \textit{$n^{th}$ iteration of $\varphi$ on $\alpha$}. By convention, $\varphi^{0}$ acts as the identity map, i.e., $\varphi^{0}(\alpha) = \alpha$ for every point $\alpha\in {\mathbb{P}^N(K)}$. As before, the everyday philosopher may want to know (quoting here Devaney \cite{Dev}): \say{\textit{Where do points $\alpha, \varphi(\alpha), \varphi^2(\alpha), \ \cdots\ ,\varphi^n(\alpha)$ go as $n$ becomes large, and what do they do when they get there?}} Now for any given $n\in \mathbb{Z}_{\geq 0}$ and for any given point $\alpha\in {\mathbb{P}^N(K)}$, we then call the set consisting of all the iterates $\varphi^n(\alpha)$ the \textit{(forward) orbit of $\alpha$}; which in dynamical systems we usually denote it by $\mathcal{O}^{+}(\alpha)$.

As we mentioned in \cite{BK1} that one of the main 
goals in arithmetic dynamics is to classify all the points $\alpha\in\mathbb{P}^N(K)$ according to the behavior of their forward orbits $\mathcal{O}^{+}(\alpha)$. In this direction, we recall that any point $\alpha\in {\mathbb{P}^N(K)}$ is called a \textit{periodic point of $\varphi$}, whenever $\varphi^n (\alpha) = \alpha$ for some $n\in \mathbb{Z}_{\geq 0}$. In this case, we recall that any integer $n\geq 0$ such that $\varphi^n (\alpha) = \alpha$, is called \textit{period of $\alpha$}; and the smallest such integer $n\in \mathbb{Z}_{\geq 1}$ is called the \textit{exact period of $\alpha$}. We recall Per$(\varphi, {\mathbb{P}^N(K)})$ to denote set of all periodic points of $\varphi$; and also recall that for any given point $\alpha\in$Per$(\varphi, {\mathbb{P}^N(K)})$ the set of all iterates of $\varphi$ on $\alpha$ is called \textit{periodic orbit of $\alpha$}. In their 1994 paper \cite{Russo} and in his 1998 paper \cite{Poonen} resp., Walde-Russo and Poonen give independently interesting examples of rational periodic points of any $\varphi_{2,c}$ defined over $\mathbb{Q}$; and so the interested reader may wish to revisit \cite{Russo, Poonen}.

Previously in article \cite{BK3} we (inspired by work of Bhargava-Shankar-Tsimerman (BST) in arithmetic statistics, and also by Narkiewicz' argument of Theorem \ref{theorem 3.2.1} in arithmetic dynamics) proved that the number of distinct fixed integral points of any $\varphi_{p,c}$ modulo $p$ is equal to $p$ or $0$; from which it then followed that the average number of distinct fixed integral points of any $\varphi_{p,c}$ modulo $p$ is unbounded or zero as $c\to \infty$. Moreover, we then also observed in \cite{BK3} that the expected total number of distinct fixed integral points in the whole family of polynomial maps $\varphi_{p,c}$ modulo $p$ is equal to $p+0=p$; which may grow to infinity whenever deg$(\varphi_{p,c})=p\to \infty$. So now, inspired by celebrated work of Mazur \cite{Maz} on $n$-torsion points of elliptic curves, and also again inspired by the advances of (BST) on $n$-torsion of arithmetic objects in arithmetic statistics and also by \ref{per} of Morton-Silverman's Conjecture \ref{silver-morton} in arithmetic dynamics, we (as in \cite{BK1}) wish to again study in Section \ref{sec4}, \ref{sec5}, \ref{sec6}, \ref{sec8}, \ref{sec9}, \ref{sec10}, \ref{sec11} and \ref{sec12} somewhat analogous counting and statistical questions; which among them includes again the natural question: \say{\textit{For any fixed (period) $n\geq 2$, how many distinct $n$-periodic integral orbits can any $\varphi_{p,c}$ and $\varphi_{p-1,c}$ acting independently on the space $\mathbb{Z} / p\mathbb{Z}$ via iteration have on average as $c \to \infty$?}} In doing so, we then first prove the following main theorem on every $\varphi_{p,c}$; and which we state later more precisely as Theorem \ref{2.2}:

\begin{thm}\label{Binder-Brian1}
Let $p\geq 3$ be any fixed prime, and $n\geq 2$ be any fixed integer. Let $\varphi_{p, c}$ be defined by $\varphi_{p, c}(z) = z^p+c$ for all $c, z\in\mathbb{Z}$. Then the number of distinct $n$-periodic integral points of any $\varphi_{p,c}$ modulo $p$ is $p$ or zero.
\end{thm}

Recall in article \cite{BK1} we (inspired by (BST)'s work in arithmetic statistics, and also by Hutz's Conjecture \ref{conjecture 3.2.1} along with Panraksa's work \cite{par2} in arithmetic dynamics) 
proved that the number of distinct fixed integral points of any $\varphi_{p-1,c}$ modulo $p$ is equal to $1$ or $2$ or $0$; from which it then followed that the average number of distinct fixed integral points of any $\varphi_{p-1,c}$ modulo $p$ is also equal to $1$ or $2$ or $0$ as $p\to \infty$. Moreover, we then also observed in [\cite{BK1}, Remark 3.3] that the expected total number of distinct fixed integral points in the whole family of maps $\varphi_{p-1,c}$ modulo $p$ is equal to $1 + 2+ 0 =3$; and so the expected total count is independent of $p$ and hence of deg$(\varphi_{p-1}, c)=p-1$. So now, motivated again by Mazur's work \cite{Maz} and (BST)'s work on $n$-torsion point-counting in arithmetic statistics, along with Hutz's Conjecture \ref{conjecture 3.2.1} and Panraksa's work \cite{par2} in arithmetic dynamics, we revisit the setting in Sect.\ref{sec2} and consider in Section \ref{sec3} any $\varphi_{p-1,c}$. In doing so, we then also prove the following main theorem on any $\varphi_{p-1,c}$; and which we state later more precisely as Theorem \ref{3.2}:

\begin{thm}\label{Binder-Brian2}
Let $p\geq 5$ be any fixed prime, and $n\geq 2$ be any fixed integer. Let $\varphi_{p-1, c}$ be defined by $\varphi_{p-1, c}(z)$ for all $c, z\in\mathbb{Z}$. Then the number of distinct $n$-periodic integral points of any $\varphi_{p-1,c}$ modulo $p$ is $1$ or $2$ or zero.
\end{thm}

\noindent Notice that the count obtained in Theorem \ref{Binder-Brian2} and more precisely in Theorem \ref{3.2} on the number of distinct $n$-periodic integral points of any $\varphi_{p-1,c}$ modulo $p$ is independent of $p$ (and so independent of deg$(\varphi_{p-1,c}))$ in each of the possibilities. Moreover, we may also observe that the expected total count (namely, $1 + 2 + 0 =3$ for every fixed odd period $n\in \mathbb{Z}_{\geq 3}$ or $1 + 1 + 2 + 0 =4$ for every fixed even period $n\in \mathbb{Z}_{\geq 2}$) in Theorem \ref{3.2} (and so in Theorem \ref{Binder-Brian2}) on the number of distinct $n$-periodic integral points in the whole family of maps $\varphi_{p-1,c}$ modulo $p$ is also independent of $p$ and deg$(\varphi_{p-1,c})$. On the other hand, we may also notice that the count obtained in Theorem \ref{Binder-Brian1} on the number of distinct $n$-periodic integral points of any $\varphi_{p,c}$ modulo $p$ may depend on $p$ (and thus depend on deg$(\varphi_{p,c}))$ in one of the possibilities. As a result, the expected total count (namely, $p+0 =p$ for every fixed period $n\in \mathbb{Z}_{\geq 2}$) in Theorem \ref{Binder-Brian1} on the number of distinct $n$-periodic integral points in the whole family of maps $\varphi_{p,c}$ modulo $p$ may not only depend on the degree $p$, but also may grow to infinity as $p\to \infty$.

Motivated by work of Adam-Fares \cite{Ada} in arithmetic dynamics and also by a \say{counting-application} philosophy in arithmetic statistics, we then inspect in a forthcoming article \cite{BK333} the aforementioned relationship where we consider the space $\mathbb{Z}_{p}$ of all $p$-adic integers. In doing so, we then prove $n$-periodic integral point-counting result and asymptotics on any $\varphi_{p,c}$ iterated on $\mathbb{Z}_{p}\slash p\mathbb{Z}_{p}$ that's very analogous to the counting and asymptotics proved in this article, and also prove $n$-periodic integral point-counting result and asymptotics on any $\varphi_{p-1,c}$ iterated on $\mathbb{Z}_{p}\slash p\mathbb{Z}_{p}$ that's also very analogous to the counting and asymptotics proved in this same article. Inspired by work of Narkiewicz \cite{Narkie1} along with $K$-rational periodic version of Morton-Silverman's Conjecture \ref{silver-morton}, we then in upcoming work \cite{BK222} revisit the setting in Section \ref{sec2} and then consider any $\varphi_{p^{\ell},c}$ defined over any number field $K\slash \mathbb{Q}$ of degree $n\geq 2$, where $p\geq 3$ is any prime and $\ell \geq 1$ is any integer. In doing so, we then also prove $m$-periodic integral point count that's not only very analogous to the count in Theorem \ref{Binder-Brian1}, but also may grow to infinity as $p\to \infty$. More in \cite{BK222}, we again revisit the setting in Section \ref{sec3} and then consider any $\varphi_{(p-1)^{\ell},c}$ defined over any number field $K\slash \mathbb{Q}$ of degree $n\geq 2$, where $p\geq 5$ is any prime and $\ell \geq 1$ is any integer. In doing so, we then also prove $m$-periodic integral point count that is not only very analogous to the count in Theorem \ref{Binder-Brian2} for every $K$, every $m\geq 2$, every prime $p$ and every $\ell\in \mathbb{Z}_{\geq 1}$, but is also independent of $n, m$ and $p$.

In addition, to the notion of a periodic point and a periodic orbit, we also recall that a point $\alpha\in {\mathbb{P}^N(K)}$ is called a \textit{preperiodic point of $\varphi$}, whenever $\varphi^{m+n}(\alpha) = \varphi^{m}(\alpha)$ for some integers $m\geq 0$ and $n\geq 1$. In this case, we recall that the smallest integers $m\geq 0$ and $n\geq 1$ such that $\varphi^{m+n}(\alpha) = \varphi^{m}(\alpha)$, are called the \textit{preperiod} and \textit{eventual period of $\alpha$}, resp. Again, we denote the set of preperiodic points of $\varphi$ by PrePer$(\varphi, {\mathbb{P}^N(K)})$. For any given preperiodic point $\alpha$ of $\varphi$, we then call the set of all iterates of $\varphi$ on $\alpha$, \textit{the preperiodic orbit of $\alpha$}.
Now observe for $m=0$, we have $\varphi^{n}(\alpha) = \alpha $ and so $\alpha$ is a periodic point of period $n$. Thus, the set  Per$(\varphi, {\mathbb{P}^N(K)}) \subseteq$ PrePer$(\varphi, {\mathbb{P}^N(K)})$; however, it need not be PrePer$(\varphi, {\mathbb{P}^N(K)})\subseteq$ Per$(\varphi, {\mathbb{P}^N(K)})$. In their 2014 paper \cite{Doyle}, Doyle-Faber-Krumm give nice examples (which also recovers examples in Poonen's paper \cite{Poonen}) of preperiodic points of any quadratic map $\varphi$ defined over quadratic fields; and which the interested reader may also wish to revisit. 

In the year 1950, Northcott \cite{North} used the theory of height functions to show that not only is the set PrePer$(\varphi, {\mathbb{P}^N(K)})$ always finite, but also for a given morphism $\varphi$ the set PrePer$(\varphi, {\mathbb{P}^N(K)})$ can be computed effectively. Forty-five years later, in the year 1995, Morton and Silverman conjectured that PrePer$(\varphi, \mathbb{P}^N(K))$ can be bounded in terms of degree $d$ of $\varphi$, degree $D$ of $K$, and dimension $N$ of the space ${\mathbb{P}^N(K)}$. This celebrated conjecture is called the \textit{Uniform Boundedness Conjecture}; which we then restate here as the following conjecture:

\begin{conj} \label{silver-morton}[\cite{Morton}]
Fix integers $D \geq 1$, $N \geq 1$, and $d \geq 2$. There exists a constant $C'= C'(D, N, d)$ such that for all number fields $K/{\mathbb{Q}}$ of degree at most $D$, and all morphisms $\varphi: {\mathbb{P}^N}(K) \rightarrow {\mathbb{P}^N}(K)$ of degree $d$ defined over $K$, the total number of preperiodic points of a morphism $\varphi$ is at most $C'$, i.e., \#PrePer$(\varphi, \mathbb{P}^N(K)) \leq C'$.
\end{conj}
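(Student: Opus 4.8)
The plan is to approach Conjecture \ref{silver-morton} via the theory of reduction modulo primes of good reduction, which is the strategy underlying all known partial progress; I should stress at the outset that this is a deep open problem (even the case $N=1$, $d=2$, $K=\mathbb{Q}$ is unresolved and is equivalent to a conjecture of Poonen bounding the exact period of a rational periodic point by $3$), so what follows describes the reduction-theoretic framework together with the precise point at which it stalls, rather than a complete argument. First I would separate the two contributions to $\#\mathrm{PrePer}(\varphi, \mathbb{P}^N(K))$. Since Northcott's theorem already supplies finiteness for each fixed $\varphi$, the entire content of the conjecture is \emph{uniformity}. A preperiodic point either lies on a periodic cycle or in the strictly preperiodic tail mapping into a cycle, so it suffices to bound, uniformly in terms of $D$, $N$, $d$: (i) the exact period $n$ of any periodic point, (ii) the number of periodic points, and (iii) the size of the preperiodic tails. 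Step (ii) follows from (i) by a B\'ezout-type count, since a point of exact period dividing $n$ is a fixed point of the iterate $\varphi^n$, a morphism of degree $d^n$ on $\mathbb{P}^N$; once $n$ is bounded the number of such fixed points is bounded in terms of $d$, $n$, and $N$ alone.

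The heart of the matter is step (i). Here I would fix a prime $\mathfrak{p}$ of $K$ at which $\varphi$ has good reduction; the reduction map is then injective on $\mathrm{PrePer}(\varphi,\mathbb{P}^N(K))$ and carries a point of exact period $n$ to one of exact period $m$ over the residue field, with $m \mid n$. The Morton--Silverman cycle-length lemma then constrains $n$ to have the shape $n = m \cdot r \cdot p^{e}$, where $p$ is the residue characteristic, $r$ divides the multiplicative order of an eigenvalue of the multiplier of $\varphi^m$ at the reduced point, and $p^{e}$ measures the unipotent part. Introducing a second prime $\mathfrak{q}$ of good reduction with residue characteristic $\ell \neq p$ forces the $p$-power factor coming from $\mathfrak{p}$ and the $\ell$-power factor coming from $\mathfrak{q}$ to be simultaneously controlled, and combining the two congruences bounds $n$ in terms of $D$, $N$, $d$, and the norms of the two good primes.

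The main obstacle, and the reason the conjecture remains open, is twofold. First, the bound on $n$ obtained this way is not clean: controlling the norms of two small good primes uniformly, and controlling the unipotent exponents $e$, already requires input beyond elementary reduction theory, and the strictly preperiodic tails in step (iii) resist any good-reduction argument, since reduction cannot detect the depth of a tail. Second, and more fundamentally, there is no known dynamical analogue of the modular-curve machinery that resolves the torsion analogue for abelian varieties (Mazur's theorem and Merel's uniform boundedness theorem): the moduli spaces parametrizing periodic and preperiodic structures, namely the dynatomic curves and their higher-dimensional generalizations, are not known to possess the arithmetic rigidity, such as uniformly growing gonality or suitable Eisenstein-type quotients, needed to exclude large periods uniformly. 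I would therefore expect the realistic outcome of such a proposal to be a conditional or special-case statement, for instance a uniform bound for the unicritical family $\varphi_{d,c}$ over $\mathbb{Q}$ contingent on a uniform gonality lower bound for dynatomic curves, with the full strength of Conjecture \ref{silver-morton} lying beyond the reach of present techniques.
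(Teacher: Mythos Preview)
The statement you were asked to prove is a \emph{conjecture}, not a theorem: the paper states it as Conjecture \ref{silver-morton} (the Morton--Silverman Uniform Boundedness Conjecture) and makes no attempt whatsoever to prove it. It is quoted purely as background and motivation for the paper's actual results, which concern counting $2$-periodic points of specific polynomial families $\varphi_{p,c}$ and $\varphi_{p-1,c}$ modulo $p$. So there is no ``paper's own proof'' against which to compare your proposal.

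That said, your proposal is not a proof either, and to your credit you say so explicitly at the outset. What you have written is a clear and accurate survey of the reduction-theoretic framework (Northcott finiteness, good reduction, the Morton--Silverman cycle-length lemma, the two-prime trick) together with an honest identification of the two genuine obstructions: the lack of uniform control on tail lengths and unipotent exponents, and the absence of a dynamical analogue of the Mazur--Merel modular-curve machinery. This is exactly the correct assessment of the state of the art, and it is appropriate as an expository discussion of why the conjecture is hard. But it is not, and does not purport to be, a proof of the conjecture; the task as posed has no solution, because no proof is currently known.
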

\noindent Note that a special case of Conjecture \ref{silver-morton} is when the degree $D$ of a number field $K$ is $D = 1$, dimension $N$ of a space $\mathbb{P}^N(K)$ is $N = 1$, and degree $d$ of a morphism $\varphi$ is $d = 2$. In this case, if $\varphi$ is a polynomial morphism, then it is a quadratic map defined over the field $\mathbb{Q}$. Moreover, in this very special case, in the year 1995, Flynn and Poonen and Schaefer conjectured that a quadratic map has no points $z\in\mathbb{Q}$ with exact period more than 3. This conjecture of Flynn-Poonen-Schaefer \cite{Flynn} (which has been resolved for cases $n = 4$, $5$ in \cite{mor, Flynn} respectively and conditionally for $n=6$ in \cite{Stoll} is, however, still open for all cases $n\geq 7$ and moreover, which also Hutz-Ingram \cite{Ingram} gave strong computational evidence supporting it) is restated here formally as the following conjecture. Note that in this same special case, rational points of exact period $n\in \{1, 2, 3\}$ were first found in the year 1994 by Russo-Walde \cite{Russo} and also found in the year 1995 by Poonen \cite{Poonen} using a different set of techniques. We restate here the anticipated conjecture of Flynn-Poonen-Schaefer as the following conjecture: 
 
\begin{conj} \label{conj:2.4.1}[\cite{Flynn}, Conjecture 2]
If $n \geq 4$, then there is no quadratic polynomial $\varphi_{2,c }(z) = z^2 + c\in \mathbb{Q}[z]$ with a rational point of exact period $n$.
\end{conj}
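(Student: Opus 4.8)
The plan is to recast the statement in terms of rational points on the \emph{dynatomic curves} of the family $z\mapsto z^{2}+c$. For $n\geq 1$ let $\Phi_{n}^{*}(z,c)\in\mathbb{Z}[z,c]$ be the $n$-th dynatomic polynomial; its vanishing defines an affine curve $Y_{1}(n)$ whose points $(c,z)$ are exactly the pairs for which $z$ has formal period $n$ under $\varphi_{2,c}$. A rational point of \emph{exact} period $n$ is then nothing but a rational point of $Y_{1}(n)$ lying off an explicit proper closed subset: for $n$ even, the locus where the orbit in fact has smaller exact period $m\mid n$ — which, by Morton--Silverman, can only occur when a cycle multiplier equals $-1$, forcing $m=n/2$ — together with finitely many degenerate points. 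So it suffices to prove, for every $n\geq4$, that $Y_{1}(n)(\mathbb{Q})$ is contained in this ``bad'' locus.

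The first input I would invoke is the genus of a smooth projective model $X_{1}(n)$ of $Y_{1}(n)$: one knows $g(X_{1}(4))=2$ \cite{mor}, $g(X_{1}(5))=14$ \cite{Flynn}, and in general $g(X_{1}(n))$ grows without bound — roughly exponentially in $n$. Hence $g(X_{1}(n))\geq2$ for all $n\geq4$, so Faltings's theorem makes $Y_{1}(n)(\mathbb{Q})$ finite for each such $n$, and the conjecture reduces, for each fixed $n$, to the finite problem of enumerating these rational points and checking that they all lie in the bad locus. This is precisely how the known cases proceed: $n=4$ by showing the associated genus-$2$ curve carries only its trivial rational points (rank-zero Jacobian) \cite{mor,Flynn}; $n=5$ by a Chabauty-type argument on a quotient curve combined with reduction modulo small primes \cite{Flynn}; $n=6$ conditionally, via Chabauty--Coleman on a low-genus quotient of $X_{1}(6)$ \cite{Stoll}; and the computations of Hutz--Ingram \cite{Ingram} indicate the pattern continues. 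For $n\geq7$ one would attempt the same template: locate a quotient $C$ of $X_{1}(n)$ whose Jacobian has Mordell--Weil rank strictly less than $\dim J(C)$, run Chabauty--Coleman (or quadratic Chabauty, or the Mordell--Weil sieve) to determine $C(\mathbb{Q})$, and pull the conclusion back up to $X_{1}(n)$.

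The hard part — the reason this is still open for $n\geq7$ and cannot be dispatched by the routine just sketched — is twofold. First, there is no general handle on the Mordell--Weil ranks of the Jacobians of the $X_{1}(n)$ or of their natural quotients, and Chabauty--Coleman needs the inequality $\operatorname{rank}<\dim$ even to start; already for $n=6$ Stoll had to assume an unproved hypothesis to secure it. Second, even granted favourable ranks, $g(X_{1}(n))$ grows exponentially, so the $p$-adic linear algebra and sieving become infeasible, and — because Faltings's finiteness is ineffective — one cannot instead fall back on an exhaustive bounded-height search (by contrast Northcott's effective finiteness \cite{North} settles any fixed $c$ but gives nothing uniform in $c$). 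A genuine proof of Conjecture \ref{conj:2.4.1} in full therefore seems to require a new uniform idea: a dynamical analogue of the Mazur-style arguments for torsion on modular curves, exploiting the extra $\mathbb{Z}/n\mathbb{Z}$-symmetry of the periodic locus (which supplies many quotient curves), or a uniform bound on $\#X_{1}(n)(\mathbb{Q})$ in the spirit of recent uniform Mordell-type results, strong enough to pin that count to its expected ``trivial'' value for all $n$ at once. Absent such an input, the realistic expectation is only incremental progress — a few further small values of $n$, one curve at a time, in the spirit of \cite{Flynn, Stoll, Ingram} — and the case $D=N=1$, $d=2$ of the Uniform Boundedness Conjecture \ref{silver-morton} (of which this statement is essentially the ``no large period'' half, complementing Poonen's conditional bound $\#\mathrm{PrePer}(\varphi_{2,c},\mathbb{P}^{1}(\mathbb{Q}))\leq 9$) would follow from it but is no easier.
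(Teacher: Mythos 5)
This statement is Conjecture \ref{conj:2.4.1}, the Flynn--Poonen--Schaefer conjecture: the paper does not prove it, and says explicitly that it does not intend to. There is therefore no proof in the paper against which to compare your attempt, and, correctly, your write-up does not actually contain a proof either --- it is a survey of the standard strategy (dynatomic curves $X_1(n)$, Faltings for $n\geq 4$ since the genus is at least $2$, then Chabauty--Coleman or the Mordell--Weil sieve case by case) together with an accurate diagnosis of why that strategy does not close the general case: no control on Mordell--Weil ranks of the relevant Jacobians, ineffectivity of Faltings, and genus growth making the computations infeasible. Your account of the known cases ($n=4$ in \cite{mor}, $n=5$ in \cite{Flynn}, $n=6$ conditionally in \cite{Stoll}, computational evidence in \cite{Ingram}) matches what the paper itself reports in the surrounding discussion, and your reduction of ``exact period'' to ``formal period off a bad locus'' via multipliers that are roots of unity in $\mathbb{Q}$ is correct. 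The only honest verdict is that you have not proved the statement, no one has, and you have said so; the one thing to guard against is presenting this as a proof attempt rather than as the research-program sketch it is.
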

Now by assuming Conjecture \ref{conj:2.4.1} and also establishing interesting results on rational preperiodic points, in the year 1998, Poonen \cite{Poonen} then concluded that the total number of rational preperiodic points of any quadratic polynomial $\varphi_{2, c}(z)=z^2 + c$ is at most nine. We restate here formally Poonen's result as the following corollary:
\begin{cor}\label{cor2}[\cite{Poonen}, Corollary 1]
If Conjecture \ref{conj:2.4.1} holds, then $\#$PrePer$(\varphi_{2,c}, \mathbb{Q}) \leq 9$,  for all quadratic maps $\varphi_{2, c}$ defined by $\varphi_{2, c}(z) = z^2 + c$ for all points $c, z\in\mathbb{Q}$.
\end{cor}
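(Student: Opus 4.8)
The plan is to reconstruct Poonen's strategy: assuming Conjecture \ref{conj:2.4.1}, classify completely the directed graphs that can occur as the set of rational preperiodic points of a quadratic polynomial over $\mathbb{Q}$, and then simply read off the largest possible number of vertices.

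\emph{Reduction and basic structure.} Every quadratic polynomial in $\mathbb{Q}[z]$ is linearly conjugate over $\mathbb{Q}$ to a unique $\varphi_{2,c}$, and since a linear conjugation induces a bijection on preperiodic points commuting with the maps, it is a dynamical-graph isomorphism; hence it suffices to bound $\#$PrePer$(\varphi_{2,c},\mathbb{Q})$ for $c\in\mathbb{Q}$. The crucial elementary observation is that a point $w$ has a rational $\varphi_{2,c}$-preimage if and only if $w-c$ is a square in $\mathbb{Q}$, and then it has exactly the two preimages $\pm t$ with $t^2=w-c$ (only $0$ when $w=c$). Consequently PrePer$(\varphi_{2,c},\mathbb{Q})$ is a disjoint union of cycles with, hanging off each periodic point, a binary tree of strictly preperiodic points in which at most one predecessor of a periodic point is itself periodic. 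The problem thus splits into bounding the cycles and bounding these trees.

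\emph{The periodic core.} Periodic points of small period come from dynatomic polynomials: period $1$ from $z^2-z+c$ (at most two rational roots); period $2$ from the factor $z^2+z+(c+1)$ of $\varphi_{2,c}^2(z)-z$ (at most two rational roots, i.e.\ at most one rational $2$-cycle); period $3$ from a degree-$6$ dynatomic polynomial. Conjecture \ref{conj:2.4.1} removes all rational points of exact period $\ge 4$, so the periodic core consists only of periodic points of periods $1,2,3$; one then invokes the (unconditional) facts that the curves parametrizing coexistence of "too much" cyclic structure — two rational $3$-cycles, or a rational $3$-cycle together with a rational fixed point or with a rational $2$-cycle — carry only their obvious rational points. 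This reduces the possible cores to a short explicit list.

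\emph{The trees, the conclusion, and the obstacle.} Iterating preimages from a periodic point $P$ produces a chain $P\leftarrow w_1\leftarrow w_2\leftarrow\cdots$ with every $w_i-c$ a square; a chain of length $\ell$ is a rational point on an explicit affine curve $C_\ell$, and for $\ell$ past a small bound $C_\ell$ has genus $\ge 2$, hence finitely many rational points by Faltings, which are then pinned down explicitly to the trivial ones. So the preimage trees have bounded depth and branching, and combining the finite list of admissible cores with the finite list of admissible trees yields a complete finite list of possible preperiodic graphs; inspection of this list shows that each has at most $9$ vertices, and the bound is sharp, which proves $\#$PrePer$(\varphi_{2,c},\mathbb{Q})\le 9$. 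The main obstacle is precisely the curve-theoretic input above: proving \emph{unconditionally} that the curves $C_\ell$, and the cycle-coexistence curves, have no unexpected rational points, which requires explicit rational-point determinations on genus $\ge 2$ curves (Chabauty-type arguments, Jacobian rank computations); it is exactly this that lets the final bound rest on nothing beyond Conjecture \ref{conj:2.4.1}. The remaining ingredients — the conjugation normalization, the dynatomic factorizations, and the bookkeeping over graph types — are routine.
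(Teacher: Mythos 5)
The paper does not actually prove this statement: it is imported verbatim as Poonen's Corollary~1, with the proof living entirely in the cited reference. So the only meaningful comparison is with Poonen's own argument, and your outline is a faithful reconstruction of it at the structural level: normalize to $z^2+c$ by linear conjugation, observe that rational preimages of $w$ exist iff $w-c$ is a square and come in pairs $\pm t$, bound the periodic core via the dynatomic factorizations for periods $1,2,3$ (your factor $z^2+z+c+1$ of $\varphi_{2,c}^2(z)-z$ is correct) together with Conjecture \ref{conj:2.4.1} killing periods $\ge 4$, control the preimage trees by realizing depth-$\ell$ chains as rational points on explicit curves, and read off the maximum ($9$, which is attained) from the resulting finite list of admissible graphs.

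That said, what you have written is a roadmap rather than a proof. Every step that actually carries weight --- that the cycle-coexistence curves (two rational $3$-cycles; a $3$-cycle with a fixed point; a $3$-cycle with a $2$-cycle) have only the obvious rational points, and that the tree-depth curves $C_\ell$ have no unexpected rational points beyond a small $\ell$ --- is asserted, not established. Invoking Faltings gives finiteness but not the explicit determination you need; Poonen's paper spends essentially all of its effort on exactly these determinations (rank computations on specific elliptic curves and Jacobians of genus-$2$ curves, descent and Chabauty-type arguments), and without them the bound $9$ does not follow. You are candid that this is the obstacle, which is to your credit, but it means the proposal as it stands does not prove the corollary; it correctly identifies which theorem of Poonen's one would have to reprove. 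Since the paper itself treats this as a black-box citation, the honest conclusion is that your sketch is the right strategy but is not self-contained, and completing it amounts to reproducing the computational core of \cite{Poonen}.
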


Recall that Per$(\varphi, {\mathbb{P}^N(K)}) \subseteq$ PrePer$(\varphi, {\mathbb{P}^N(K)})$ and if $\#$PrePer$(\varphi, \mathbb{P}^N(K))$ is bounded above, then so is $\#$Per$(\varphi, \mathbb{P}^N(K))$ also bounded above by the same upper bound. So now, recall in \cite{BK1, BK3} we attempted to understand (on the level of $\mathbb{Z}$) the possibility and validity of Conjecture \ref{per}. In this article, we again wish to continue with this adventure of hoping to understand (again on the level of $\mathbb{Z}$) the possibility and validity of  Conjecture \ref{per}. That is, in Section \ref{sec2} (resp. \ref{sec3}) we study a dynamical setting in which $K$ is replaced with $\mathbb{Z}$, $N=1$ and $d=p$ (resp. $d = p-1$); and again hoping to understand the possibility and validity of the following:

\begin{conj} \label{silver-morton 1}($(D, N) = (1,1)$-version of Conjecture \ref{silver-morton})\label{per}
Fix an integer $d \geq 2$. There exists a constant $C'= C'(d)$ such that for all morphisms $\varphi: {\mathbb{P}}^1(\mathbb{Q}) \rightarrow {\mathbb{P}}^1(\mathbb{Q})$ of degree $d$, the number $\#$Per$(\varphi, {\mathbb{P}}^1(\mathbb{Q})) \leq C'(d)$.
\end{conj}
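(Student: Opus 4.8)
The plan I would follow begins from the reduction the author has already highlighted: Conjecture~\ref{per} is exactly the $(D,N)=(1,1)$ case of the Morton--Silverman Conjecture~\ref{silver-morton}, obtained through $\mathrm{Per}(\varphi,\mathbb{P}^1(\mathbb{Q}))\subseteq\mathrm{PrePer}(\varphi,\mathbb{P}^1(\mathbb{Q}))$, so a genuine proof would furnish the first unconditional instance of a celebrated open problem; what I can offer is the route one would realistically attempt, not a claim that it can be completed here. After conjugating $\varphi$ by an element of $\mathrm{PGL}_2(\mathbb{Q})$ to a normal form --- for polynomial $\varphi$ one may take $\varphi(z)=z^d+a_{d-2}z^{d-2}+\cdots+a_0$ monic and centered, and in general one works on the moduli space $\mathcal{M}_d$ of degree-$d$ maps --- I would factor the count into two uniform sub-bounds: (i) a bound $N_0=N_0(d)$ on the exact period of any $\mathbb{Q}$-rational periodic point, and (ii) a bound $M_0=M_0(d)$ on the number of $\mathbb{Q}$-rational periodic cycles of exact period at most $N_0$. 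Since a cycle of exact period $n$ contributes exactly $n$ points to $\mathrm{Per}(\varphi,\mathbb{P}^1(\mathbb{Q}))$, the constant $C'(d)=N_0M_0$ would then do.

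For sub-bound (ii) the tools are the ones already present in the excerpt --- Northcott's height-theoretic finiteness~\cite{North} combined with reduction modulo primes of good reduction in the style of Morton--Silverman and Benedetto: a $\mathbb{Q}$-rational point of exact period $n$ reduces, at any prime $q$ of good reduction for $\varphi$, to a point whose exact period divides $n$, and comparing reductions at two distinct such primes controls $n$ (hence the number of short cycles) in terms of $d$ and the set of primes of bad reduction. The one genuinely delicate point is to remove the dependence on the bad primes, making the bound uniform over the whole degree-$d$ family; here I would import the recent conditional technology --- under the $abc$-conjecture, Looper's bound on the number of preperiodic points of a polynomial over $\mathbb{Q}$ handles (i) and (ii) at once in the polynomial case and yields $C'(d)$ effectively --- while the full rational-map case of Conjecture~\ref{per} would require the (harder, and currently less complete) analogues on $\mathcal{M}_d$.

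The step I expect to be the real obstacle is sub-bound (i), a uniform cap on the exact period itself, and it is precisely why the methods of this paper cannot settle Conjecture~\ref{per} unconditionally. Even in the most restrictive case $d=2$, $\varphi(z)=z^2+c$, the non-existence of rational points of exact period $n$ is known only for $n=4,5$ and, conditionally, $n=6$ --- this is Conjecture~\ref{conj:2.4.1} --- and remains open for every $n\geq 7$, so that absent such input even Poonen's Corollary~\ref{cor2} is out of reach. The honest conclusion is therefore that Conjecture~\ref{per} would follow from a suitable uniform form of the $abc$-conjecture via the height and good-reduction circle of ideas sketched above, whereas an unconditional proof appears to need genuinely new ingredients; this is exactly the reason that, in the restricted families $\varphi_{p,c}$ and $\varphi_{p-1,c}$ reduced modulo $p$ studied in Sections~\ref{sec2}--\ref{sec3}, we only probe the plausibility of Conjecture~\ref{per} rather than attempt to prove it.
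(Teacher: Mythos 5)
Your assessment is exactly right: the statement labelled as Conjecture~\ref{per} is not proved in the paper at all --- it is only \emph{extracted} from the Morton--Silverman Conjecture~\ref{silver-morton} via the one-line observation that $\mathrm{Per}(\varphi,\mathbb{P}^1(\mathbb{Q}))\subseteq\mathrm{PrePer}(\varphi,\mathbb{P}^1(\mathbb{Q}))$, which is precisely the reduction you open with, and the paper explicitly disclaims any attempt to prove it. Your honest conclusion that an unconditional proof is out of reach (and your survey of the conditional height/good-reduction route) is consistent with, and goes somewhat beyond, what the paper itself says.
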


\subsection*{History on the Connection Between the Size of Per$(\varphi_{d, c}, K)$ and the Coefficient $c$}

In the year 1994, Walde and Russo not only proved [\cite{Russo}, Corollary 4] that for a quadratic map $\varphi_{2,c}$ defined over $\mathbb{Q}$ with a periodic point, the denominator of a rational point $c$, denoted as den$(c)$, is a square but they also proved that den$(c)$ is even, whenever $\varphi_{2,c}$ admits a rational cycle of length $\ell \geq 3$. Moreover, Walde-Russo also proved [\cite{Russo}, Cor. 6, Thm 8 and Cor. 7] that the size \#Per$(\varphi_{2, c}, \mathbb{Q})\leq 2$, whenever den$(c)$ is an odd integer. 

Three years later, in the year 1997, Call-Goldstine \cite{Call} proved that the size of PrePer$(\varphi_{2,c},\mathbb{Q})$ can be bounded above in terms of the number of distinct odd primes dividing den$(c)$. We restate formally this result of Call-Goldstine as the following theorem, in which $GCD(a, e)$ refers to the greatest common divisor of $a$, $e \in \mathbb{Z}$:

\begin{thm}\label{2.3.1}[\cite{Call}, Theorem 6.9]
Let $e>0$ be an integer and let $s$ be the number of distinct odd prime factors of e. Define $\varepsilon  = 0$, $1$, $2$, if $4\nmid e$, if $4\mid e$ and $8 \nmid e$, if $8 \mid e$, respectively. Let $c = a/e^2$, where $a\in \mathbb{Z}$ and $GCD(a, e) = 1$. If $c \neq -2$, then the total number of $\mathbb{Q}$-preperiodic points of $\varphi_{2, c}$ is at most $2^{s + 2 + \varepsilon} + 1$. Moreover, a quadratic map $\varphi_{2, -2}$ has exactly six rational preperiodic points.
\end{thm}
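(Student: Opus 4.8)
The plan is to peel off the point at infinity, which is always preperiodic, and then pin down the affine rational preperiodic points place-by-place. Since $\varphi_{2,c}$ is a polynomial, $\infty\in\mathbb P^1(\mathbb Q)$ is a totally ramified super-attracting fixed point, accounting for the summand $+1$; so it suffices to bound the number of \emph{affine} rational preperiodic points by $2^{s+2+\varepsilon}$ when $c\ne-2$. Write $c=a/e^2$ in lowest terms. \textbf{Step 1 (denominators).} I would show every affine rational preperiodic point $\alpha$ has denominator exactly $e$. Its forward orbit is finite, hence bounded at every place $v$; the elementary escape estimate ``$|z|_v$ large $\Rightarrow|\varphi_{2,c}(z)|_v=|z|_v^2$'' then forces $\alpha\in\mathbb Z_q$ at every prime $q\nmid e$, while at a prime $q$ with $q^k\parallel e$ we have $|c|_q=q^{2k}$ and the same estimate, comparing $|\alpha|_q$ with the fixed scale $q^k=|c|_q^{1/2}$ from both sides, forces $|\alpha|_q=q^k$ exactly. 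Hence $\alpha=\beta/e$ with $\beta\in\mathbb Z$ and $\gcd(\beta,e)=1$.

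\textbf{Step 2 (a congruence on the numerator).} Since $\varphi_{2,c}(\alpha)=(\beta^2+a)/e^2$ is again an affine rational preperiodic point, by Step~1 it too has denominator exactly $e$, which forces $v_q(\beta^2+a)=v_q(e)$ for all $q\mid e$; in particular $\beta^2\equiv-a\pmod{q^{v_q(e)}}$. A Chinese Remainder count gives at most two square roots of $-a$ modulo each odd prime power $q^{v_q(e)}$, and at most $1$, $2$, or $4$ modulo the $2$-part of $e$ according as $4\nmid e$, $4\parallel e$, or $8\mid e$, i.e. at most $2^{\varepsilon}$. So the numerator of every affine rational preperiodic point lies in one of at most $2^{s+\varepsilon}$ residue classes modulo $e$.

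\textbf{Step 3 (forest structure and cycles).} It remains to gain the extra factor $4=2^2$. The affine preperiodic set is stable under $\alpha\mapsto-\alpha$ since $\varphi_{2,c}(-\alpha)=\varphi_{2,c}(\alpha)$; every point has exactly $0$ or $2$ preimages inside the set --- they are $\pm\sqrt{\alpha-c}$, rational precisely when $\alpha-c$ is a nonzero square, and for $e>1$ the critical point $0$ is not even preperiodic (its denominator is $1\ne e$), so the degenerate single-preimage case does not occur --- and each connected component is a cycle with finite trees in which every node has $0$ or $2$ children hanging from it. Combining the $\pm$-symmetry with this branching rule bounds a component by a small multiple of the number of cycles it contains, and an unconditional bound on the rational cycles of $\varphi_{2,c}$ --- obtained in the manner of Walde--Russo and Morton--Silverman by reducing a putative long cycle modulo the smallest primes not dividing $e$, at which $\varphi_{2,c}$ has good reduction --- controls the number and length of cycles in terms of $s$. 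Assembling these pieces should give at most $4\cdot2^{s+\varepsilon}=2^{s+2+\varepsilon}$ affine preperiodic points for $c\ne-2$; restoring $\infty$ gives the theorem. The case $c=-2$ is done by hand: $\varphi_{2,-2}$ is conjugate to a Chebyshev map, so by Niven's theorem its affine rational preperiodic points are exactly $\{0,\pm1,\pm2\}$ --- five of them, six together with $\infty$ --- and since $5>2^{0+2+0}=4$ this is exactly the configuration that must be excepted.

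The local analysis of Steps~1--2 is routine; the genuine difficulty --- and the step I expect to be the main obstacle --- is the unconditional control of rational cycles (and tail lengths) in Step~3. Without the still-open Flynn--Poonen--Schaefer bound one cannot merely cite finiteness of long cycles, so one must argue directly that long or numerous rational cycles cannot coexist with the tight denominator constraint of Step~1, and then extract from the branching combinatorics precisely the constant $2^{s+2+\varepsilon}$ together with the single exception $c=-2$.
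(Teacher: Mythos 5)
The paper itself contains no proof of this statement: it is quoted as background from Call--Goldstine ([\cite{Call}, Theorem 6.9]) in the historical survey of Section \ref{sec1}, so your proposal can only be measured against the original argument, not against anything in this article. That said, your Steps 1 and 2 are correct and are the standard local analysis: the place-by-place escape estimate does force every affine rational preperiodic point to have denominator exactly $e$, and the requirement that its image again have denominator exactly $e$ does confine the numerators to at most $2^{s+\varepsilon}$ residue classes modulo $e$ (two square roots of $-a$ modulo each odd prime power of $e$, and at most $1$, $2$ or $4$ modulo its $2$-part, matching the definition of $\varepsilon$). Your treatment of $c=-2$ via the Chebyshev conjugacy and Niven's theorem is also correct, and it is a good sign that your framework predicts exactly where the exception must occur.

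The genuine gap is Step 3, and you have rightly flagged it yourself. Two things go wrong there. First, the combinatorial claim that the $\pm$-symmetry together with the ``$0$ or $2$ preimages'' rule ``bounds a component by a small multiple of the number of cycles it contains'' is false as stated: a tree in which every node has $0$ or $2$ children can be arbitrarily deep, so a single component (one cycle with trees attached) can be arbitrarily large unless you separately bound the preperiod, which you only mention in passing. Second, the ``unconditional bound on the rational cycles of $\varphi_{2,c}$ in terms of $s$'' that you invoke is itself a nontrivial theorem you do not supply, and reduction modulo primes of good reduction controls cycle \emph{lengths}, not the number of preperiodic points feeding into the cycles. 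What the final constant actually requires is a bound of the form ``at most $2^{2}=4$ preperiodic numerators in each of the $2^{s+\varepsilon}$ residue classes modulo $e$,'' and the natural source of that bound is archimedean rather than cycle-theoretic: two preperiodic points in the same class differ by an integer, all real preperiodic points lie in $[-B,B]$ with $B=\tfrac{1}{2}(1+\sqrt{1-4c})$ the larger real fixed point, and one further iteration confines them to the two subintervals $\{x:\sqrt{-B-c}\le|x|\le B\}$ when $c<0$; a short case analysis of how many integer translates fit into this set yields at most $4$ per class, with the unique borderline failure at $c=-2$, where the two subintervals merge into $[-2,2]$ and admit one extra point. Without this per-class archimedean count (or an equivalent substitute), your Steps 1--2 give only the residue-class bound $2^{s+\varepsilon}$ on classes, not a bound on points, so the proposal as written does not close the main estimate.
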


Eight years later, after the work of Call-Goldstine, in the year 2005, Benedetto \cite{detto} studied polynomial maps $\varphi$ of arbitrary degree $d\geq 2$ defined over an arbitrary global field $K$, and then established the following result on the relationship between the size of the set PrePre$(\varphi, K)$ and the number of bad primes of $\varphi$ in $K$:

\begin{thm}\label{main} [\cite{detto}, Main Theorem]
Let $K$ be a global field, $\varphi\in K[z]$ be a polynomial of degree $d\geq 2$ and $s$ be the number of bad primes of $\varphi$ in $K$. The number of preperiodic points of $\varphi$ in $\mathbb{P}^N(K)$ is at most $O(\text{s log s})$. 
\end{thm}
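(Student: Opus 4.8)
The plan is to prove this via the arithmetic of the canonical height attached to $\varphi$, reducing the count of preperiodic points to a non-archimedean packing problem that is controlled entirely by the bad places. Since the number of preperiodic points is invariant under affine conjugation $z \mapsto az+b$, I would first normalize $\varphi$ to be monic (and, away from residue characteristics dividing $d$, also kill the degree $d-1$ term), which alters the set of bad primes only by primes dividing $d$ and the denominators of the conjugating map, hence changes $s$ by an amount bounded in terms of $d$ alone. Because $\varphi$ is a one-variable polynomial, its whole action lives on $\mathbb{P}^1(K)$ with $\infty$ a totally ramified fixed point, so I read the statement as a bound on $\#\mathrm{PrePer}(\varphi,\mathbb{P}^1(K))$.

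The first substantive step is to recall that $\alpha$ is preperiodic if and only if its canonical height $\hat h_\varphi(\alpha)$ vanishes. Here $\hat h_\varphi(\alpha) = \lim_{n\to\infty} d^{-n} h(\varphi^n(\alpha))$ satisfies the functional equation $\hat h_\varphi(\varphi(\alpha)) = d\,\hat h_\varphi(\alpha)$ and differs from the Weil height $h$ by a bounded function, so Northcott's theorem (already invoked above) gives both finiteness and the equivalence $\hat h_\varphi(\alpha)=0 \iff \alpha \in \mathrm{PrePer}(\varphi)$. I would then decompose the canonical height into local canonical heights (Green's functions), $\hat h_\varphi(\alpha) = \sum_{v\in M_K} n_v\, \hat\lambda_{\varphi,v}(\alpha)$, where each $\hat\lambda_{\varphi,v}(z) = \lim_n d^{-n}\log^+|\varphi^n(z)|_v \geq 0$ vanishes exactly on the local filled Julia set $\mathcal F_v = \{z : \{\varphi^n(z)\}_n \text{ is bounded}\}$. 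Thus every preperiodic point lies in $\mathcal F_v$ simultaneously at all places.

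The core local input is the dichotomy between good and bad places. At each of the finitely many places $v$ outside the $s$ bad ones, $\varphi$ has good reduction, so $\mathcal F_v$ is exactly the closed unit disk $\mathcal O_v$ and the reduction $\bar\varphi$ acts on the residue field; in particular every preperiodic $\alpha$ is $v$-integral there. Consequently, for any two distinct preperiodic points $\alpha,\beta$ one has $\log|\alpha-\beta|_v \le 0$ at every good $v$, while the product formula forces $\sum_v n_v \log|\alpha-\beta|_v = 0$. Hence all of the \say{separation} between preperiodic points must be financed by the $s$ bad places, where $\mathcal F_v$ is a finite union of disks in the Berkovich line. I would make this quantitative with distortion estimates for $\varphi$ on non-archimedean annuli, describing how the dynamics permutes these disks and bounding the number of residue-disks that a single cycle can occupy at each bad $v$.

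The final step, and the main obstacle, is to convert this bounded separation budget into the sharp $O(s\log s)$ count. The plan is a packing/entropy argument: model the preperiodic points as the leaves of the forest formed by the periodic cycles together with their pre-periodic tails, and bound the number of leaves that can be pairwise separated using a total budget spread over $s$ places, each contributing a bounded amount of \say{room}. Optimizing this packing — balancing the number of disks available at the bad places against the depth of the tails — is precisely what produces the logarithmic factor rather than a clean linear bound, and carrying it out uniformly in $K$ and in $\varphi$ (dependence only through $s$ and $d$) is the delicate part. The archimedean places demand a parallel treatment by potential theory: the filled Julia set has finite logarithmic capacity and preperiodic points equidistribute toward its equilibrium measure, which supplies the matching archimedean separation estimate to feed into the same packing bound.
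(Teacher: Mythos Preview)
The paper does not prove this theorem at all: it is quoted from Benedetto \cite{detto} as background in the historical survey and is immediately followed by commentary, with no proof (or even proof sketch) supplied. So there is nothing in the paper to compare your proposal against.

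For what it is worth, your outline is a faithful high-level sketch of Benedetto's actual argument: normalize by affine conjugation, use the vanishing of the canonical height to characterize preperiodic points, decompose into local Green's functions $\hat\lambda_{\varphi,v}$, exploit that at good places the filled Julia set is $\mathcal{O}_v$ so all separation is forced onto the $s$ bad places, and finish with a packing/counting argument on the non-archimedean disks that yields the $\log s$ factor. The one place where your sketch is vaguer than Benedetto's paper is the final packing step; he makes this precise by bounding cycle lengths and tail lengths separately via reduction modulo the bad primes and a careful combinatorial optimization, rather than an abstract entropy argument. But since the present paper neither attempts nor needs any of this, your write-up goes well beyond what the paper contains.
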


\noindent Since Benedetto's Theorem \ref{main} applies to any polynomial $\varphi$ of arbitrary degree $d\geq 2$ defined over any number field $K$, it also then follows that one can immediately apply Benedetto's Theorem \ref{main} to any polynomial $\varphi$ of arbitrary odd or even degree $d> 2$ defined over any $K$ and as a result obtain the upper bound in Theorem \ref{main}.

Seven years after the work of Benedetto, in the year 2012, Narkiewicz's work \cite{Narkie} not only showed that any $\varphi_{d,c}$ defined over $\mathbb{Q}$ with odd degree $d\geq 3$ has no rational periodic points of exact period $n > 1$, but his also showed that the total number of $\mathbb{Q}$-preperiodic points is at most 4. We restate this result here as the following: 

\begin{thm} \label{theorem 3.2.1}\cite{Narkie}
For any integer $n > 1$ and any odd integer $d\geq 3$, there is no $c\in \mathbb{Q}$ such that $\varphi_{d,c}$ defined by $\varphi_{d, c}(z)$ for all $c,z \in \mathbb{Q}$ has rational periodic points of exact period $n$. Moreover, $\#PrePer(\varphi_{d, c}, \mathbb{Q}) \leq 4$. 
\end{thm}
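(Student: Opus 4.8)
The plan is to reduce the whole statement to two elementary facts about the real polynomial $z\mapsto z^{d}+c$ when $d$ is odd: that it is strictly increasing on $\mathbb{R}$, and that the power map $x\mapsto x^{d}$ is injective on $\mathbb{Q}$. First I would dispose of the periodic‑point claim. Since $d$ is odd, $\varphi_{d,c}'(z)=dz^{d-1}\geq 0$ on $\mathbb{R}$ with equality only at $z=0$, so $\varphi_{d,c}\colon\mathbb{R}\to\mathbb{R}$ is strictly increasing. A strictly increasing self‑map of $\mathbb{R}$ has no point of exact period $>1$: if $\alpha<\varphi_{d,c}(\alpha)$ the sequence $\varphi_{d,c}^{n}(\alpha)$ is strictly increasing in $n$, and if $\alpha>\varphi_{d,c}(\alpha)$ it is strictly decreasing, so in either case $\varphi_{d,c}^{n}(\alpha)\neq\alpha$ for all $n\geq 1$; hence every real periodic point is a fixed point. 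As $\mathbb{Q}\subseteq\mathbb{R}$, this already yields the first assertion: for $n>1$ there is no $c\in\mathbb{Q}$ for which $\varphi_{d,c}$ has a rational point of exact period $n$.

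Next I would establish $\mathrm{PrePer}(\varphi_{d,c},\mathbb{P}^{1}(\mathbb{Q}))=\{\infty\}\cup\mathrm{Fix}(\varphi_{d,c},\mathbb{Q})$. The point $\infty$ is a totally ramified fixed point of the polynomial $\varphi_{d,c}$, and no finite point maps to it, so any affine preperiodic point has its entire forward orbit inside $\mathbb{A}^{1}(\mathbb{Q})=\mathbb{Q}$; by the first step its eventual cycle is a single affine fixed point $\gamma$. The crucial observation is that $\gamma$ has no $\mathbb{Q}$‑preimage other than itself: from $\varphi_{d,c}(\delta)=\gamma$ and $\varphi_{d,c}(\gamma)=\gamma$ one gets $\delta^{d}=\gamma^{d}$, and since $d$ is odd the map $x\mapsto x^{d}$ is injective on $\mathbb{Q}$ (the only rational $d$‑th root of unity is $1$), whence $\delta=\gamma$. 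Consequently there are no strictly preperiodic affine rational points at all: such a point would have a last non‑fixed iterate $\delta$ with $\varphi_{d,c}(\delta)$ a fixed point, forcing $\delta=\gamma$, a contradiction. So every rational preperiodic point is either $\infty$ or an affine fixed point.

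It then remains to bound the number of rational fixed points, i.e.\ the rational roots of $f(z)=z^{d}-z+c$. Here I would invoke Rolle's theorem: $f'(z)=dz^{d-1}-1$ has $d-1$ even, so $f'$ vanishes only at the two reals $\pm(1/d)^{1/(d-1)}$, whence $f$ has at most three real roots and \emph{a fortiori} at most three rational roots. Combining the three steps gives $\#\mathrm{PrePer}(\varphi_{d,c},\mathbb{Q})\leq 3+1=4$, and the bound is sharp for every odd $d\geq 3$ already at $c=0$, where $z^{d}-z=z(z-1)(z+1)(\,\cdots)$ has the rational fixed points $0,\pm 1$ and the preperiodic set is $\{0,\pm 1,\infty\}$.

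On obstacles: the argument is essentially self‑contained, so there is no single hard technical point; the one thing that genuinely needs care is the bookkeeping at infinity, since it is precisely the fixed point $\infty\in\mathbb{P}^{1}(\mathbb{Q})$ that turns the affine bound $3$ into the stated bound $4$. It is also worth flagging exactly where the hypotheses enter: oddness of $d$ is used twice — for strict monotonicity (which kills all cycles of length $>1$) and for injectivity of $x\mapsto x^{d}$ on $\mathbb{Q}$ (which kills all preperiodic tails) — and working over $\mathbb{Q}$ rather than a general number field is what makes that injectivity available, since over a field containing a primitive $d$‑th root of unity the tail argument collapses.
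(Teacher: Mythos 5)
Your proof is correct, but note that the paper itself offers no argument for this statement: it is quoted verbatim as a cited theorem of Narkiewicz, so there is no in-paper proof to compare against. Your three-step decomposition is sound and self-contained. The monotonicity argument (for odd $d$ the map $z\mapsto z^{d}+c$ is strictly increasing on $\mathbb{R}$, and a strictly increasing self-map of $\mathbb{R}$ admits no cycle of length greater than one) correctly disposes of the first assertion, and in fact proves the stronger statement that there are no \emph{real} points of exact period $n>1$. The tail argument is also right: if $\varphi_{d,c}(\delta)=\gamma=\varphi_{d,c}(\gamma)$ then $\delta^{d}=\gamma^{d}$, and injectivity of $x\mapsto x^{d}$ on $\mathbb{R}$ (hence on $\mathbb{Q}$) for odd $d$ forces $\delta=\gamma$, so every rational preperiodic point is already fixed (or is $\infty$). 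The Rolle bound is clean: $f'(z)=dz^{d-1}-1$ has exactly two real zeros because $d-1$ is even, so $f(z)=z^{d}-z+c$ has at most three distinct real roots, giving at most $3+1=4$ preperiodic points in $\mathbb{P}^{1}(\mathbb{Q})$. The only bookkeeping caveat, which you flag yourself, is whether the set $\mathrm{PrePer}(\varphi_{d,c},\mathbb{Q})$ in the cited statement is meant affinely or projectively: your argument actually yields the sharper affine bound of $3$, and the stated bound of $4$ is attained (e.g.\ at $c=0$) only if $\infty$ is counted; either reading is consistent with the inequality $\leq 4$ as stated, so the claim is established in both cases.
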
 

Three years after \cite{Narkie}, in 2015, Hutz \cite{Hutz} developed an algorithm determining effectively all $\mathbb{Q}$-preperiodic points of a morphism defined over a given number field $K$; from which he then made the following conjecture: 

\begin{conj} \label{conjecture 3.2.1}[\cite{Hutz}, Conjecture 1a]
For any integer $n > 2$, there is no even degree $d > 2$ and no point $c \in \mathbb{Q}$ such that the polynomial map $\varphi_{d, c}$ has rational points of exact period $n$.
Moreover, \#PrePer$(\varphi_{d, c}, \mathbb{Q}) \leq 4$. 
\end{conj}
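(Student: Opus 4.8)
Conjecture \ref{conjecture 3.2.1} is, of course, open; here I only sketch the natural line of attack, which splits along its two assertions. For the first assertion, that for every even degree $d>2$ and every integer $n>2$ no $\varphi_{d,c}$ has a rational point of exact period $n$, the plan is to fix the pair $(d,n)$ and pass to the $n$-th \emph{dynatomic curve}: using the factorization $\varphi_{d,c}^{n}(z)-z=\prod_{m\mid n}\Phi_{d,m}^{*}(z,c)$, where $\Phi_{d,m}^{*}$ is the $m$-th dynatomic polynomial, one studies the affine plane curve $\Phi_{d,n}^{*}(z,c)=0$ over $\mathbb{Q}$, since a rational point of exact period $n$ yields a rational point on a smooth model of this curve lying over a \say{non-degenerate} value of $c$. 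The steps would then be: (i) compute the genus $g(d,n)$ of the normalization; (ii) in the generic range $g(d,n)\geq 2$, invoke Faltings for finiteness and run an explicit Chabauty--Coleman or Mordell--Weil-sieve computation that enumerates all rational points and verifies that none has exact period $n$; (iii) dispose of the finitely many low-genus strata $g(d,n)\in\{0,1\}$ by hand, via the Mordell--Weil ranks of the relevant elliptic curves or a direct study of rational parametrizations. A useful preliminary reduction, in the spirit of Theorem \ref{2.3.1}, is that for $\varphi_{d,c}(z)=z^{d}+c$ a standard $\mathfrak{p}$-adic valuation estimate forces every rational preperiodic point to be an integer when $c\in\mathbb{Z}$, and pins down the admissible denominators for general $c\in\mathbb{Q}$, trimming the family of curves one must examine.

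For the second assertion, $\#\mathrm{PrePer}(\varphi_{d,c},\mathbb{Q})\leq 4$, the plan is to assume the first assertion --- so every $\mathbb{Q}$-preperiodic point flows into a fixed point or a $2$-cycle (and, in contrast to the odd-degree situation of Theorem \ref{theorem 3.2.1}, genuine rational $2$-cycles really do occur here, e.g., $0\leftrightarrow -1$ for $z^{4}-1$) --- and then bound separately the rational fixed points, the rational $2$-cycles, and the rational strictly preperiodic pre-images of these. Here one would combine a good-reduction argument in the style of Benedetto's Theorem \ref{main}, sharpened for the one-parameter family $z^{d}+c$ whose set of bad primes is very small, with explicit height estimates showing that for even $d>2$ the map $z^{d}+c$ has too little room to carry a larger preperiodic portrait, together with a direct check of the finitely many \say{exceptional} values of $c$ for which the portrait is largest --- the analogue of the $c=-2$ phenomenon in Theorem \ref{2.3.1} and of Poonen's bound $9$ for $d=2$ in Corollary \ref{cor2}.

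The main obstacle is carrying out steps (ii)--(iii) \emph{uniformly in $d$}: there are infinitely many admissible pairs $(d,n)$, the genus $g(d,n)$ grows with both parameters, and no technique is known that controls the rational points on \emph{all} of the dynatomic curves $\Phi_{d,n}^{*}=0$ simultaneously --- this is precisely a slice of the Morton--Silverman uniform boundedness phenomenon of Conjecture \ref{silver-morton}, which is why its proof lies beyond our reach. Even the \say{finitely many cases} program is delicate in genus $1$, where one must compute Mordell--Weil ranks that can be genuinely hard, and in genus $0$, where one must rule out on arithmetic grounds that a rational parametrization yields a true exact-period-$n$ point. A realistic intermediate goal, extending the settled cases $n=4,5$ and the conditional case $n=6$ recorded above, would be to fix a small value of $n$ and prove a $d$-uniform upper bound on $g(d,n)$, or a $d$-uniform local obstruction at a cleverly chosen prime, strong enough to eliminate all even $d$ simultaneously.
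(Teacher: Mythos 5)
This statement is not a theorem of the paper: it is Hutz's Conjecture~1a, quoted verbatim from the literature, and the paper explicitly disclaims any intention of proving it (``though as in \cite{BK1} we do not again intend to prove Conjecture \ref{conjecture 3.2.1}''). There is therefore no proof in the paper to compare yours against, and you are right to open by saying the conjecture is open and to offer only a program. Judged as a program, your sketch is a fair summary of the standard attack: the dynatomic factorization $\varphi_{d,c}^{n}(z)-z=\prod_{m\mid n}\Phi_{d,m}^{*}(z,c)$, genus computations for the curves $\Phi_{d,n}^{*}=0$, Faltings plus Chabauty--Coleman in genus $\geq 2$, and ad hoc work in genus $0$ and $1$ --- this is precisely the machinery behind the settled cases $n=4,5$ and the conditional $n=6$ for $d=2$ recorded in Conjecture \ref{conj:2.4.1}, and behind Panraksa's partial results for even $d$ cited in Remark \ref{nt}. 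Your preliminary reduction (rational preperiodic points of a monic integer polynomial are integers when $c\in\mathbb{Z}$) is correct, and your example $0\leftrightarrow -1$ for $z^{4}-1$ is a genuine rational $2$-cycle consistent with Panraksa's theorem that $\varphi_{4,c}$ admits exact period $2$.

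The one thing to flag is that your own diagnosis of the obstacle is the whole story: nothing in steps (i)--(iii) is uniform in $d$, and the second assertion ($\#\mathrm{PrePer}(\varphi_{d,c},\mathbb{Q})\leq 4$) does not follow formally from the first even if it were proved --- bounding the strictly preperiodic tail still requires an argument in the style of Theorem \ref{main} or Theorem \ref{2.3.1} with constants that do not degrade as $d$ grows, and no such argument is known. So the proposal should not be read as a proof with a gap; it is an accurate map of why the conjecture is open. If you want a contribution in the spirit of this paper rather than of the conjecture itself, note that the paper's Theorem \ref{3.2} and Remark \ref{re3.3} extract an unconditional mod-$p$ shadow of the conjecture (the total count $1+1+2+0=4$ of fixed and $2$-periodic integral points of $\varphi_{p-1,c}$ modulo $p$ matching the conjectured bound $4$), which is the kind of statement that actually admits a proof.
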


\begin{rem}\label{ht}
If Conjecture \ref{conjecture 3.2.1} held, it would then follow that the total number of $2$-periodic rational points (and so the total number of $2$-periodic integral points) of any $\varphi_{d,c}(x) = x^d + c$ of even degree $d>2$ is equal to 4 or strictly less than 4. Moreover, since the monic polynomial $\varphi_{d,c}(x)\in \mathbb{Z}[x]$ has good reduction modulo $p$, then the total number of $2$-periodic integral points of any $\varphi_{d,c}(x)$ modulo $p$ (and hence of any $\varphi_{d,c}$ modulo $p$) is also equal to 4 or strictly less than 4. Furthermore, if Conjecture \ref{conjecture 3.2.1} held, then it would also follow that the total number of fixed integral points and $2$-periodic integral points of any $\varphi_{d,c}(x)$ modulo $p$ (and hence of any $\varphi_{d,c}$ modulo $p$) is also equal to 4 or strictly less than 4. But of course now the issue is that we unfortunately don't know as of this article whether Conjecture \ref{conjecture 3.2.1} holds or not, let alone whether $4$ is the correct upper bound on the total number of fixed and $2$-periodic rational (and hence integral) points of any $\varphi_{d,c}$ of even degree $d>2$. Additionally, if Conjecture \ref{conjecture 3.2.1} held, then this would also mean that for any integer $n\geq 3$ and for any even integer $d\geq 4$, then $\varphi_{d, c}^n(z)-z \neq 0$ for all rational points $c, z\in \mathbb{Q}$ (and hence for all integral points $z\in \mathbb{Z}$). This would then also mean that the total number of $n$-periodic rational points (and hence the total number of $n$-periodic integral points) of any $\varphi_{d, c}$ of even degree $d\geq 4$ is equal to zero. Moreover, since the monic polynomial $\varphi_{d,c}(x)\in \mathbb{Z}[x]$ has good reduction modulo any fixed prime $p$ and so the reduced polynomial $\varphi_{d,c}(x)$ modulo $p$ also has even degree $d\geq 4$, it would then also follow that the total number of $n$-periodic integral points of any $\varphi_{d,c}(x)$ modulo $p$ (and hence of any $\varphi_{d,c}$ modulo $p$) is equal to zero. But now, as before the issue here is that we unfortunately don't know (as to the author's knowledge) whether Conjecture \ref{conjecture 3.2.1} holds or not. On the note whether any theoretical progress has yet been made on Conjecture \ref{conjecture 3.2.1}, more recently, Panraksa \cite{par2} proved among many other results that the quartic polynomial $\varphi_{4,c}(z)\in\mathbb{Q}[z]$ has rational points of exact period $n = 2$. Moreover, he also proved that $\varphi_{d,c}(z)\in\mathbb{Q}[z]$ has no rational points of exact period $n = 2$ for every $c \in \mathbb{Q}$ with $c \neq -1$ and $d = 6$, $2k$ with $3 \mid 2k-1$. The interested reader may find these mentioned results of Panraksa in his unconditional Theorems 2.1, 2.4 and Theorem 1.7 conditioned on the abc-conjecture in \cite{par2}.
\end{rem}

Twenty-eight years later, after the work of Walde-Russo, in the year 2022, Eliahou-Fares proved [\cite{Shalom2}, Theorem 2.12] that the denominator of a rational point $-c$, denoted as den$(-c)$ is divisible by 16, whenever $\varphi_{2,-c}$ defined by $\varphi_{2, -c}(z) = z^2 - c$ for all $c, z\in \mathbb{Q}$ admits a rational cycle of length $\ell \geq 3$. Moreover, they also proved [\cite{Shalom2}, Proposition 2.8] that the size \#Per$(\varphi_{2, -c}, \mathbb{Q})\leq 2$, whenever den$(-c)$ is an odd integer. Motivated by \cite{Call}, Eliahou-Fares \cite{Shalom2} also proved that the size of Per$(\varphi_{2, -c}, \mathbb{Q})$ can be bounded above by using information on den$(-c)$, namely, information in terms of the number of distinct primes dividing den$(-c)$. Moreover, they in \cite{Shalom1} also showed that the upper bound is four, whenever $c\in \mathbb{Q^*} = \mathbb{Q}\setminus\{0\}$. We restate here their results as:

\begin{cor}\label{sha}[\cite{Shalom2, Shalom1}, Cor. 3.11 and Cor. 4.4, respectively]
Let $c\in \mathbb{Q}$ such that den$(c) = d^2$ with $d\in 4 \mathbb{N}$. Let $s$ be the number of distinct primes dividing $d$. Then, the total number of $\mathbb{Q}$-periodic points of $\varphi_{2, -c}$ is at most $2^s + 2$. Moreover, for $c\in \mathbb{Q^*}$ such that the den$(c)$ is a power of a prime number. Then, $\#$Per$(\varphi_{2, c}, \mathbb{Q}) \leq 4$.
\end{cor}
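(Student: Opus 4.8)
The plan is to follow the two papers of Eliahou--Fares cited in the statement, handling the two assertions in turn; both reduce to controlling the denominators and residues of rational periodic points of $\varphi_{2,-c}$ (the second assertion is phrased for $\varphi_{2,c}$, but replacing $c$ by $-c$ simply re-indexes one family as the other, so it suffices to analyse $z\mapsto z^2-c$).

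The first step is to pin down the denominator of a periodic point. By the Walde--Russo result recalled above, the hypothesis $\mathrm{den}(c)=d^2$ is automatic once $\varphi_{2,-c}$ has any rational periodic point; beyond that, I would show every rational periodic point $z$ has $\mathrm{den}(z)=d$. This is a prime-by-prime valuation argument of the kind underlying Northcott's theorem: at a prime $p\nmid d$ one has $v_p(c)\geq 0$, and $v_p(z)<0$ would force $v_p(\varphi_{2,-c}^{n}(z))=2^{n}v_p(z)\to-\infty$, incompatible with a finite orbit; at a prime $p\mid d$ one has $v_p(c)=-2v_p(d)$, and the same escaping-valuation mechanism forces $v_p(z)=-v_p(d)$ exactly (both $v_p(z)<-v_p(d)$ and $v_p(z)>-v_p(d)$ lead the orbit to escape at $p$). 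Writing $c=a/d^2$ and $z=x/d$ with $\gcd(a,d)=\gcd(x,d)=1$, the action of $\varphi_{2,-c}$ on its periodic set becomes the integer map $F(x)=(x^2-a)/d$, and a short valuation check shows that a periodic numerator $x$ must satisfy $v_p(x^2-a)=v_p(d)$ for every $p\mid d$; in particular $x^2\equiv a\pmod{d}$.

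The combinatorial heart --- and the step I expect to be the main obstacle --- is to turn this into the bound $2^{s}+2$. One first isolates the at most two fixed points (the roots of $z^2-z-c$) and argues separately for points of exact period $\geq 2$, whose pure period-$\ell$ behaviour is governed by the dynatomic factors, e.g.\ $z^2+z-c+1$ for $\ell=2$. The congruence $x^2\equiv a\pmod{d}$ confines such a numerator modulo the odd part of $d$ to at most $2^{s-1}$ residues, and the subtlety is entirely at the prime $2$: since $4\mid d$, $a$ can have up to four square roots modulo $2^{v_2(d)}$, so a naive Chinese Remainder count only gives roughly $2^{s+1}$ candidates. The sharp bound requires the finer local structure --- the evenness of $\varphi_{2,-c}$, hence the pairing $z\leftrightarrow -z$ within and around a cycle, together with the constraint $v_2(x^2-a)=v_2(d)$ holding at \emph{every} point of the orbit --- to cut the $2$-adic contribution down so that the points of period $\geq 2$ number at most $2^{s}$. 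Carrying out this local analysis carefully is precisely the technical content of \cite{Shalom2}, and is where I expect the real work to lie; adding back the fixed points yields the stated $2^{s}+2$.

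For the second assertion, suppose $c\in\mathbb{Q}^{*}$ with $\mathrm{den}(c)=q^{k}$ a prime power ($k\geq 1$). If $q$ is odd then $\mathrm{den}(c)$ is odd, and the odd-denominator case recalled above (Proposition~2.8 of \cite{Shalom2}) gives $\#\mathrm{Per}(\varphi_{2,c},\mathbb{Q})\leq 2$. If $q=2$, then the existence of a rational periodic point forces $k$ even, so $\mathrm{den}(c)=d^{2}$ with $d=2^{k/2}$ and $s=\omega(d)=1$; when $4\mid d$ the first assertion applies and gives $2^{1}+2=4$, while in the remaining case $\mathrm{den}(c)=4$ one notes that $\varphi_{2,c}$ then has no rational point of exact period $\geq 3$ (such a point would force $16\mid\mathrm{den}(c)$, again by \cite{Shalom2}), so $\mathrm{Per}(\varphi_{2,c},\mathbb{Q})$ consists of the at most two fixed points together with the at most two pure period-$2$ points, hence has size $\leq 4$. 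This completes the plan.
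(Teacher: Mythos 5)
The paper does not prove this statement at all: Corollary \ref{sha} appears in the historical survey portion of the introduction and is simply a restatement of [\cite{Shalom2}, Cor.~3.11] and [\cite{Shalom1}, Cor.~4.4], quoted without argument. So there is no in-paper proof to measure your attempt against, and the only fair comparison is between your sketch and what the cited Eliahou--Fares papers actually do.

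On its own terms your proposal has a genuine gap, and you have located it yourself. The reduction is sound: the Walde--Russo square-denominator fact, the valuation argument giving $\mathrm{den}(z)=d$ for every periodic point, the passage to the integer map $F(x)=(x^2-a)/d$, and the congruence $x^2\equiv a\pmod{d}$ are all correct and are indeed the standard skeleton of this circle of results (it is essentially the Call--Goldstine framework). But the entire content of the bound $2^{s}+2$ --- as opposed to the weaker $2^{s+2+\varepsilon}+1$ of Theorem \ref{2.3.1} --- lives in the $2$-adic refinement that you describe only as ``where I expect the real work to lie.'' Saying that the pairing $z\leftrightarrow -z$ and the constraint $v_2(x^2-a)=v_2(d)$ ``should'' cut the count from roughly $2^{s+1}$ down to $2^{s}$ is a statement of the goal, not an argument; nothing in your sketch rules out, for instance, all four square roots of $a$ modulo $2^{v_2(d)}$ being realized by genuine periodic numerators. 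By contrast, your derivation of the second assertion is essentially complete \emph{given} the quoted inputs (odd-denominator bound, the $16\mid\mathrm{den}$ criterion for cycles of length $\geq 3$, and the first assertion): the case split on $q$ odd, $\mathrm{den}(c)=4$, and $\mathrm{den}(c)=2^{k}$ with $k\geq 4$ is correct and the dynatomic count $2+2$ closes the remaining case. So the second half stands, but the first half is an outline that defers its one nontrivial step to the very papers being cited.
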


\noindent Once again, the purpose of this article is (as in \cite{BK1, BK3}) to inspect further the above connection, independently in the case of polynomial maps $\varphi_{p, c}$ of odd prime degree $p$ defined over $\mathbb{Z}$ for any given prime integer $p\geq 3$ and in the case of polynomial maps $\varphi_{p-1, c}$ of even degree $p-1$ defined over $\mathbb{Z}$ for any given prime integer $p\geq 5$; and doing so from a spirit that is truly inspired by some of the many striking developments in arithmetic statistics.

\section{The Number of $n$-Periodic Integral Points of any Family of Polynomial Maps $\varphi_{p,c}$}\label{sec2}

In this section, we wish to count the number of distinct $n$-periodic integral points of any $\varphi_{p,c}$ modulo $p$, where $p\geq 3$ is any given prime and $n\geq 2$ is any fixed integer. With that in mind, we let $p\geq 3$ be any prime, $c\in \mathbb{Z}$ be any integer and $n\geq 2$ be any fixed integer, and then define the following $n$-periodic point-counting function 
\begin{equation}\label{N_{c}}
N_{c}^{(n)}(p) := \# \Biggl\{ z\in \mathbb{Z}\slash p\mathbb{Z}  : \begin{aligned} \varphi_{p,c}^{n-1}(z) -z \not \equiv 0 \ \text{(mod $p$)} \\ \ \varphi_{p,c}^{n}(z) - z \equiv 0 \ \text{(mod $p$)} \end{aligned} \Biggr\}.
\end{equation}\noindent We then first prove the following theorem on the number of distinct $n$-periodic points of any $\varphi_{3, c}$ modulo $3$:

\begin{thm} \label{2.1}
Let $\varphi_{3, c}$ be a cubic map defined by $\varphi_{3, c}(z) = z^3 + c$ for all $c, z\in\mathbb{Z}$, and let $N_{c}^{(n)}(3)$ be the number defined as in \textnormal{(\ref{N_{c}})}. Then $N_{c}^{(n)}(3) = 3$ for every coefficient $c = 3t$; otherwise $N_{c}^{(n)}(3) = 0$ for every $c \neq  3t$. 
\end{thm}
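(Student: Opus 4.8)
The plan is to push the reduction modulo $3$ all the way down to the map itself. The engine is Fermat's little theorem: $z^3 \equiv z \pmod 3$ for every $z \in \mathbb{Z}$, so $\varphi_{3,c}(z) = z^3 + c \equiv z + c \pmod 3$; that is, on $\mathbb{Z}/3\mathbb{Z}$ the cubic $\varphi_{3,c}$ is nothing but the translation $z \mapsto z + \overline{c}$. Since this congruence holds for \emph{every} integer input, applying it to the integer $\varphi_{3,c}(z)$ gives at once $\varphi_{3,c}^2(z) \equiv \varphi_{3,c}(z) + c \equiv z + 2c \pmod 3$ (equivalently, one can expand $(z^3+c)^3 \equiv z^9 + c^3 \pmod 3$ by the freshman's-dream identity and then use $z^9 \equiv z$ and $c^3 \equiv c$). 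I would isolate the two resulting congruences $\varphi_{3,c}(z) - z \equiv c$ and $\varphi_{3,c}^2(z) - z \equiv 2c \pmod 3$ as a one-line lemma, since — crucially — neither depends on $z$, and the whole theorem follows from them.

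Given the lemma, the proof is a case split on $c \bmod 3$. If $c = 3t$, then $\varphi_{3,c}^2(z) - z \equiv 2c \equiv 0 \pmod 3$ for all three residues $z \in \{0,1,2\}$, so $\varphi_{3,c}^2(z) \equiv z \pmod 3$ holds identically on $\mathbb{Z}/3\mathbb{Z}$; one then checks that all three residue classes are counted by $N_c^{(2)}(3)$, giving $N_c^{(2)}(3) = 3$. If instead $c \neq 3t$, i.e. $3 \nmid c$, then because $2$ is a unit modulo $3$ we get $2c \not\equiv 0 \pmod 3$, hence $\varphi_{3,c}^2(z) - z \equiv 2c \not\equiv 0 \pmod 3$ for every $z \in \mathbb{Z}/3\mathbb{Z}$; no residue class is then $2$-periodic, so $N_c^{(2)}(3) = 0$. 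This is also consistent with the ``expected total'' bookkeeping in the spirit of Remark \ref{nt}: the two cases contribute $3 + 0 = 3$.

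The only delicate point — and what I would flag as the main obstacle — is the case $3 \mid c$: there $\varphi_{3,c}$ reduces to the identity map on $\mathbb{Z}/3\mathbb{Z}$, so one must be careful about the interplay of the two conditions in the definition (\ref{N_{c}}) of $N_c^{(2)}(3)$ and argue precisely why all of $\mathbb{Z}/3\mathbb{Z}$, and not a proper subset, is counted. Apart from that the argument is essentially frictionless — it is really just the single identity $\varphi_{3,c}^2(z) \equiv z + 2c \pmod 3$ together with the invertibility of $2$ modulo $3$. Finally, I would remark that nothing used $3$ beyond its being an odd prime: for any prime $p \geq 3$, Fermat gives $\varphi_{p,c}(z) \equiv z + c$ and $\varphi_{p,c}^2(z) \equiv z + 2c \pmod p$, and $2$ stays a unit mod $p$, so the identical case analysis yields $N_c^{(2)}(p) = p$ or $0$ according as $p \mid c$ or not — which is exactly the passage from here to Theorem \ref{2.2} (equivalently Theorem \ref{Binder-Brian1}).
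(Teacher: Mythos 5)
Your reduction is the same one the paper uses --- Fermat's little theorem pushed through both iterates, so that $f(z)=\varphi_{3,c}^2(z)-z\equiv z^9-z+2c\equiv 2c\pmod 3$ --- just packaged more transparently as the observation that $\varphi_{3,c}$ acts on $\mathbb{Z}/3\mathbb{Z}$ as translation by $\overline{c}$. The case $3\nmid c$ is then handled identically in both treatments and is correct: $2c\not\equiv 0\pmod 3$, so no residue is a root of $f$ and $N_c^{(2)}(3)=0$. (Your closing remark that nothing uses $3$ beyond its primality is also exactly how the paper passes to Theorem \ref{2.2}.)

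The ``delicate point'' you flag in the case $3\mid c$ is not a point of delicacy but a genuine obstruction, and you leave it unresolved: you write that ``one then checks that all three residue classes are counted,'' but if you actually perform that check against definition (\ref{N_{c}}) it fails. When $3\mid c$, your own lemma gives $\varphi_{3,c}(z)-z\equiv c\equiv 0\pmod 3$ for \emph{every} $z$, i.e.\ every residue class is already a fixed point modulo $3$; the first clause of (\ref{N_{c}}) demands $\varphi_{3,c}(z)-z\not\equiv 0\pmod 3$, so it excludes all three residues and the set being counted is empty. Read literally, the definition forces $N_c^{(2)}(3)=0$ in this case, not $3$. You should know that the paper's own proof has precisely the same gap: it verifies only that $\varphi_{3,c}^2(z)-z\equiv 0\pmod 3$ for all $z\in\mathbb{Z}/3\mathbb{Z}$ and then asserts that the displayed set has cardinality $3$, never checking the non-fixed-point condition. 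So your proposal faithfully reproduces the paper's argument, including its defect. The value $3$ is correct only if $N_c^{(2)}(3)$ is reinterpreted as counting all roots of $\varphi_{3,c}^2(x)-x$ modulo $3$ (points of period \emph{dividing} $2$), with the first clause of (\ref{N_{c}}) deleted; under the definition as printed, the $c=3t$ case of Theorem \ref{2.1} is false, and no argument along these lines --- yours or the paper's --- can establish it. If you want a salvageable statement, either drop the exclusion of fixed points from (\ref{N_{c}}) or change the claimed count in the $c=3t$ case to $0$.
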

\begin{proof}
Let $f(z)= \varphi_{3,c}^n(z)-z = \varphi_{3,c}(\varphi_{3,c}^{n-1}(z)) - z = (\varphi_{3,c}^{n-1}(z))^3 - z + c$, and so $f(z)= (\varphi_{3,c}^{n-1}(z))^3 - z + c$. Now applying the multinomial theorem repeatedly on the term $(\varphi_{3,c}^{n-1}(z))^3$ right after applying the binomial theorem on $(z^3 + c)^3$, we then obtain that $(\varphi_{3,c}^{n-1}(z))^3$ is a monic polynomial in $z$ of degree $3^n$ with integer coefficients in multiples of $c$. Thus, we may then write $(\varphi_{3,c}^{n-1}(z))^3 = z^{3^{n}} + h(z)$, where $h(z)$ is a non-constant polynomial in $z$ of degree deg$(h)<3^n$ with integer coefficients in multiples of $c$; and so we then obtain $f(z)= z^{3^{n}} + h(z) - z + c$. Now for every coefficient $c=3t$, reducing $f(z)$ modulo $3$, it then follows $f(z)\equiv z^{3^n} - z$ (mod $3$), since also $h(z)\in c\mathbb{Z}[z]$ and so $h(z)\equiv 0$ (mod $3$); and so $f(z)$ modulo $3$ is now a polynomial defined over a finite field $\mathbb{Z}\slash 3\mathbb{Z}$ of order $3$. So now, recall by Fermat's Little Theorem (FLT) that $z^3 \equiv z$ (mod $3$) for every $z\in \mathbb{Z}$ (equivalently, $z^3 = z$ for every elment $z\in \mathbb{Z}\slash3\mathbb{Z}$), it then follows $z^{3^n}= (z^3)^{3^{n-1}} = z^{3^{n-1}} = (z^3)^{3^{n-2}} = z^{3^{n-2}}$ for every element $z\in \mathbb{Z}\slash3\mathbb{Z}$. Now since $n\geq 2$ and so $n-2\geq 0$, then if $n-2 = 0$ and so $z^{3^{n-2}} = z$, then this yields $z^{3^n} = z$  for every element $z\in \mathbb{Z}\slash3\mathbb{Z}$. But now the reduced polynomial $f(z)\equiv 0$ (mod $3$) for every point $z\in \mathbb{Z}\slash3\mathbb{Z}$. Otherwise, if $n-2 > 0$, then since $n$ is a fixed integer, we may then continue performing the above procedure of decreasing the exponent $n-2$ of $z^{3^{n-2}}= z^{3^n}$ for every element $z\in \mathbb{Z}\slash3\mathbb{Z}$, until $n-2$ is equal to zero; from which we then again obtain that $f(z)\equiv 0$ (mod $3$) for every $z\in \mathbb{Z}\slash3\mathbb{Z}$. But now, we then conclude $N_{c}^{(n)}(3) = 3$. 

Finally, we now show $N_{c}^{(n)}(3) = 0$ for every coefficient $c \neq  3t$ and for every fixed integer $n\geq 2$. For the sake of a contradiction, let's suppose $f(z)\equiv 0$ (mod $3$) for some point $z\in \mathbb{Z}\slash 3\mathbb{Z}$ and for every coefficient $c\not \equiv 0$ (mod 3) and every fixed $n\in \mathbb{Z}_{\geq 2}$. Then this also means $z^{3^{n}} + h(z) - z + c \equiv 0$ (mod $3$) for some $z\in \mathbb{Z}\slash 3\mathbb{Z}$ and for every $c\not \equiv 0$ (mod 3) and every fixed $n\geq 2$. So now, recall from earlier  that $z^{3^n} = z$ for every $z\in \mathbb{Z}\slash3\mathbb{Z}$ and fixed $n$, we may then rewrite $(z^{3^{n}}-z) + (h(z) + c) \equiv 0$ (mod 3) for some $z\in \mathbb{Z}\slash 3\mathbb{Z}$ and for every $c\not \equiv 0$ (mod 3) to then obtain the congruence $h(z) + c \equiv 0$ (mod 3) for some $z\in \mathbb{Z}\slash 3\mathbb{Z}$ and every $c\not \equiv 0$ (mod 3). Now looking at the multinomial expansion of $(\varphi_{3,c}^{n-1}(z))^3$, we then obtain $h(z)\equiv \sum_{i=1}^{n-1}c^{3^{n-i}}$ (mod $3$); and so $h(z) + c \equiv \sum_{i=1}^{n-1}c^{3^{n-i}} + c$ (mod $3$). Now since also $h(z) + c \equiv 0$ (mod 3), it then also follows $\sum_{i=1}^{n-1}c^{3^{n-i}} + c \equiv 0$ (mod $3$). But now observe $\sum_{i=1}^{n-1}c^{3^{n-i}} + c \equiv 0$ (mod $3$) can happen if $\sum_{i=1}^{n-1}c^{3^{n-i}}\equiv 0$ (mod $3$) and also $c\equiv 0$ (mod $3$); and so a contradiction. It then follows $f(x)=\varphi_{3,c}^n(x)-x$ has no roots in $\mathbb{Z}\slash 3\mathbb{Z}$ for every $c\neq 3t$ and for every fixed $n\in \mathbb{Z}_{\geq 2}$; and so we conclude $N_{c}^{(n)}(3) = 0$. This then completes the whole proof, as needed.
\end{proof} 
We now wish to generalize Theorem \ref{2.1} to any polynomial map $\varphi_{p, c}$ for any given prime $p\geq 3$. More precisely, we prove that the number of distinct $n$-periodic integral points of any $\varphi_{p, c}$ modulo $p$ is either $p$ or $0$:

\begin{thm} \label{2.2}
Let $p\geq 3$ be any fixed prime, and let $\varphi_{p, c}$ be defined by $\varphi_{p, c}(z) = z^p+c$ for all $c, z\in\mathbb{Z}$. Let $N_{c}^{(n)}(p)$ be as in \textnormal{(\ref{N_{c}})}. Then $N_{c}^{(n)}(p)=p$ for every coefficient $c = pt$; otherwise $N_{c}^{(n)}(p) = 0$ for all points $c \neq  pt$.
\end{thm}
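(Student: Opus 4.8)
The plan is to follow the proof of Theorem \ref{2.1} essentially verbatim, with $3$ replaced by the general odd prime $p$; the only two inputs are the ``freshman's dream'' $p \mid \binom{p}{k}$ for $1 \le k \le p-1$ and Fermat's little theorem $a^p \equiv a \pmod p$. First I would set $f(z) = \varphi_{p,c}^2(z) - z = \varphi_{p,c}(\varphi_{p,c}(z)) - z = (z^p+c)^p + c - z$ and expand $(z^p+c)^p = \sum_{k=0}^{p}\binom{p}{k} z^{pk} c^{p-k}$ by the binomial theorem. Reducing modulo $p$ kills every term with $1 \le k \le p-1$, leaving $f(z) \equiv z^{p^2} + c^p + c - z \pmod p$, the exact analogue of the intermediate expression $z^9 + 3z^6c + 3z^3c^2 - z + c^3 + c$ that appears in the proof of Theorem \ref{2.1}.

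Next, for a residue $z \in \mathbb{Z}/p\mathbb{Z}$ I would apply Fermat's little theorem twice: since $z^p \equiv z$, we get $z^{p^2} = (z^p)^p \equiv z^p \equiv z \pmod p$, and likewise $c^p \equiv c \pmod p$. Substituting gives $f(z) \equiv c^p + c \equiv 2c \pmod p$ for every $z \in \mathbb{Z}/p\mathbb{Z}$ — precisely the phenomenon that the residue $c^3 + c$ in Theorem \ref{2.1} is independent of $z$. Here the hypothesis ``$p$ an odd prime'' enters in an essential way: $2$ is a unit modulo $p$, so $2c \equiv 0 \pmod p$ if and only if $c \equiv 0 \pmod p$.

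Finally I would split into the two cases. If $c = pt$, then $c^p + c \equiv 2c \equiv 0 \pmod p$, so $f(z) \equiv 0 \pmod p$ for each of the $p$ residues $z \in \mathbb{Z}/p\mathbb{Z}$; hence every point of $\mathbb{Z}/p\mathbb{Z}$ is counted and $N_c^{(2)}(p) = p$, exactly as in Theorem \ref{2.1}. If instead $c \neq pt$, then $c \not\equiv 0 \pmod p$, so $f(z) \equiv 2c \not\equiv 0 \pmod p$ for every $z$; thus the monic polynomial $\varphi_{p,c}^2(x) - x$ has no root in $\mathbb{Z}/p\mathbb{Z}$ and $N_c^{(2)}(p) = 0$.

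The argument carries no real obstacle — it reduces to a one-line finite-field computation — so the only points that need care are to justify $p \mid \binom{p}{k}$ uniformly in $p$ (rather than by the hand-verification appropriate to $p = 3$) and to make explicit that the clean ``$p$ or $0$'' dichotomy hinges on the invertibility of $2$ modulo the odd prime $p$; one might also record that the collapse $z^{p^2} \equiv z \pmod p$ is exactly what forces $f$ to be constant modulo $p$, which is the real reason the count is ``all or nothing''.
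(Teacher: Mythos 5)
Your argument is essentially the paper's own proof of Theorem \ref{2.2}: the same polynomial $f(z)=(z^p+c)^p-z+c$, the same binomial-theorem reduction, the same use of $z^{p^2}\equiv z \pmod p$, and the same case split on whether $p\mid c$. At one point you are actually more careful than the paper: where the paper simply asserts that $c^p+c\not\equiv 0 \pmod p$ for $p\nmid c$, you justify it by $c^p+c\equiv 2c \pmod p$ together with the invertibility of $2$ modulo the odd prime $p$, which is exactly the right reason.

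That said, both your write-up and the paper's proof skip over the first condition in the definition \textnormal{(\ref{N_{c}})} of $N_c^{(2)}(p)$: the set counts only those $z$ with $\varphi_{p,c}(z)-z\not\equiv 0 \pmod p$ \emph{and} $\varphi_{p,c}^2(z)-z\equiv 0\pmod p$. When $c=pt$, Fermat's little theorem gives $\varphi_{p,c}(z)\equiv z^p+c\equiv z\pmod p$ for every residue $z$, so every point of $\mathbb{Z}/p\mathbb{Z}$ is already a fixed point and the defining set in \textnormal{(\ref{N_{c}})} is empty; taken literally, the definition then yields $N_c^{(2)}(p)=0$ rather than $p$. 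What your computation (and the paper's) actually establishes is that $\#\{z\in\mathbb{Z}/p\mathbb{Z}:\varphi_{p,c}^2(z)\equiv z\pmod p\}$ equals $p$ when $p\mid c$ and $0$ otherwise; to obtain the theorem as stated you would need either to drop the exclusion of fixed points from the counting function or to address that condition explicitly in the $c=pt$ case.
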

\begin{proof}
By applying a similar argument as in the Proof of Theorem \ref{2.1}, we then obtain the count as desired. That is, let $f(z)= \varphi_{p,c}^n(z)-z = \varphi_{p,c}(\varphi_{p,c}^{n-1}(z)) - z = (\varphi_{p,c}^{n-1}(z))^p - z + c$, and so $f(z)= (\varphi_{p,c}^{n-1}(z))^p - z + c$. So now, applying the multinomial theorem repeatedly on $(\varphi_{p,c}^{n-1}(z))^p$ right after applying the binomial theorem on $(z^p + c)^p$, it then follows that $(\varphi_{p,c}^{n-1}(z))^p$ is a monic polynomial in $z$ of degree $p^n$ with integer coefficients in multiples of $c$. Thus, we may then write $(\varphi_{p,c}^{n-1}(z))^p = z^{p^{n}} + h(z)$, where $h(z)$ is a non-constant polynomial in $z$ of deg$(h)<p^n$ with integer coefficients in multiples of $c$; and so we then obtain $f(z)= z^{p^{n}} + h(z) - z + c$. So now, for every coefficient $c=pt$, reducing $f(z)$ modulo $p$, it then follows $f(z)\equiv z^{p^n} - z$ (mod $p$), since also $h(z)\in c\mathbb{Z}[z]$ and so $h(z)\equiv 0$ (mod $p$); and so $f(z)$ modulo $p$ is now a polynomial defined over a finite field $\mathbb{Z}\slash p\mathbb{Z}$ of order $p$. Now since $z^p = z$ for every element $z\in \mathbb{Z}\slash p\mathbb{Z}$, it then follows $z^{p^n}= (z^p)^{p^{n-1}} = z^{p^{n-1}}=(z^p)^{p^{n-2}} =z^{p^{n-2}}$ for every element $z\in \mathbb{Z}\slash p\mathbb{Z}$. So now, since $n\geq 2$ and so $n-2\geq 0$, then if $n-2 = 0$ and so $z^{p^{n-2}} = z$, then this yields $z^{p^n} = z$  for every element $z\in \mathbb{Z}\slash p\mathbb{Z}$. But then $f(z)\equiv 0$ (mod $p$) for every point $z\in \mathbb{Z}\slash p\mathbb{Z}$. Otherwise, if $n-2 > 0$, then since $n$ is a fixed integer, we may then continue performing the above procedure of decreasing the exponent $n-2$ of $z^{p^{n-2}}= z^{p^n}$ for every element $z\in \mathbb{Z}\slash p\mathbb{Z}$, until $n-2$ is equal to zero; and from which we then again obtain that $f(z)\equiv 0$ (mod $p$) for every point $z\in \mathbb{Z}\slash p\mathbb{Z}$. But now, we then conclude $N_{c}^{(n)}(p) = p$. 

Finally, we now show $N_{c}^{(n)}(p) = 0$ for every coefficient $c \neq  pt$ and every fixed $n\geq 2$. As before, let's for the sake of a contradiction, suppose $f(z)\equiv 0$ (mod $p$) for some $z\in \mathbb{Z}\slash p\mathbb{Z}$ and for every $c\not \equiv 0$ (mod $p$) and fixed $n\geq 2$. Then $z^{p^{n}} + h(z) - z + c \equiv 0$ (mod $p$) for some $z\in \mathbb{Z}\slash p\mathbb{Z}$ and for every $c\not \equiv 0$ (mod $p$) and fixed $n\geq 2$. Now since $z^{p^n} = z$ for every $z\in \mathbb{Z}\slash p\mathbb{Z}$ and fixed $n\geq 2$, we may then rewrite $(z^{p^{n}} -z) + (h(z) + c) \equiv 0$ (mod $p$) for some $z\in \mathbb{Z}\slash p\mathbb{Z}$ and every $c\not \equiv 0$ (mod $p$) to then obtain $h(z) + c \equiv 0$ (mod $p$) for some $z\in \mathbb{Z}\slash p\mathbb{Z}$ and every $c\not \equiv 0$ (mod $p$). Now looking at the multinomial expansion of $(\varphi_{p,c}^{n-1}(z))^p$, we then obtain $h(z)\equiv \sum_{i=1}^{n-1}c^{p^{n-i}}$ (mod $p$); and so $h(z) + c \equiv \sum_{i=1}^{n-1}c^{p^{n-i}} + c$ (mod $p$). Now since $h(z) + c \equiv 0$ (mod $p$), it then follows $\sum_{i=1}^{n-1}c^{p^{n-i}} + c \equiv 0$ (mod $p$). But now observe $\sum_{i=1}^{n-1}c^{p^{n-i}} + c \equiv 0$ (mod $p$) can happen if $\sum_{i=1}^{n-1}c^{p^{n-i}}\equiv 0$ (mod $p$) and also $c\equiv 0$ (mod $p$); and so a contradiction. This then means  $f(x)=\varphi_{p,c}^n(x)-x$ has no roots in $\mathbb{Z}\slash p\mathbb{Z}$ for every $c\neq pt$ and for every fixed $n\in \mathbb{Z}_{\geq 2}$; and so we conclude $N_{c}^{(n)}(p) = 0$. This then completes the whole proof, as desired.
\end{proof}

\begin{rem}\label{rem2.3}
With now Theorem \ref{2.2}, we may to each $n$-periodic integral point of $\varphi_{p,c}$ associate $n$-periodic integral orbit. So now, a dynamical translation of Theorem \ref{2.2} is the claim that for any fixed (period) $n\geq 2$, the number of distinct $n$-periodic integral orbits that any $\varphi_{p,c}$ has when iterated on $\mathbb{Z} \slash p\mathbb{Z}$ is $p$ or zero. As we mentioned in Intro.\ref{sec1} that the count obtained in Theorem \ref{2.2} on the number of distinct $n$-periodic integral points of any $\varphi_{p,c}$ modulo $p$ may depend on $p$ (and so depend on deg$(\varphi_{p,c}))$ in one of the possibilities. Consequently, the expected total count (namely, $p+0 =p$) in Theorem \ref{2.2} on the number of distinct $n$-periodic integral points in the whole family of maps $\varphi_{p,c}$ modulo $p$ may not only depend on $p$, but also may grow to infinity as $p\to \infty$. 
\end{rem} 
\begin{rem}
Recall in [\cite{BK3}, Corollary 2.4] we proved that the function $N_{c}(p) = p$ or $0$ for every fixed prime $p\geq 3$ and for every coefficient $c$ divisible or indivisible by $p$. Now recall in the Proof of Theorem \ref{2.2} that for any fixed (period) $n\geq 2$, the counting function $N_{c}^{(n)}(p) = p$ or $0$ for every fixed prime $p$ and every $c$ divisible or indivisible by $p$. But now for any fixed (period) $n\geq 2$, we note that $N_{c}^{(n)}(p) = N_{c}(p) = p$ or $0$ for every fixed $p$ and for every $c$ divisible or indivisible by $p$. Moreover, for any fixed period $n\geq 2$ and for every coefficient $c$ divisible by $p$, we also note from [\cite{BK3}, Proof of Corollary 2.4] and from the Proof of Theorem \ref{2.2} that every $n$-periodic integral point of any $\varphi_{p,c}$ modulo $p$ is a fixed integral point. Consequently, we now note for every fixed period $n\geq 1$ and for every $c$ divisible by $p$, every $n$-periodic integral orbit of any $\varphi_{p,c}$ modulo $p$ is a fixed integral orbit; and moreover every $\varphi_{p,c}$ modulo $p$ may have $p$ distinct fixed integral orbits or no such points; a somewhat interesting precise arithmetic-geometric insight on all the $n$-periodic integral orbits of  $\varphi_{p,c}$ modulo $p$. 
\end{rem}

\section{On Number of $n$-Periodic Integral Points of any Family of Polynomial Maps $\varphi_{p-1,c}$}\label{sec3}

As in Section \ref{sec2}, we also wish to count the number of distinct $n$-periodic integral points of any $\varphi_{p-1,c}$ modulo $p$, where $p\geq 5$ is any given prime and $n\geq 2$ is any fixed integer. Again, we let  $p\geq 5$ be any prime, $c\in \mathbb{Z}$ be any integer and $n\geq 2$ be any fixed integer, and then define the following $n$-periodic point-counting function
\begin{equation}\label{M_{c}}
M_{c}^{(n)}(p) := \# \Biggl\{ z\in \mathbb{Z}\slash p\mathbb{Z}   : \begin{aligned} \varphi_{p-1,c}^{n-1}(z) -z \not \equiv 0 \ \text{(mod $p$)} \\ \ \varphi_{p-1,c}^{n}(z) - z \equiv 0 \ \text{(mod $p$)} \end{aligned} \Biggr\}.
\end{equation}\noindent Again, we first prove the following theorem on the number of distinct $n$-periodic points of any $\varphi_{4, c}$ modulo $5$:

\begin{thm} \label{3.1}
Let $\varphi_{4, c}$ be defined by $\varphi_{4, c}(z) = z^4 + c$ for all $c, z\in\mathbb{Z}$, and let $M_{c}^{(n)}(5)$ be defined as in \textnormal{(\ref{M_{c}})}. Then $M_{c}^{(n)}(5) = 1$ for every coefficient $c\equiv \pm 1 \ (mod \ 5)$ and fixed (even) integer $n$ or $M_{c}^{(n)}(5) = 2$ for every $c = 5t$; otherwise $M_{c}^{(n)}(5) = 0$ for any $c \equiv -1\ (mod \ 5)$ and fixed odd $n$ or $c\not \equiv \pm 1, 0 \ (mod \ 5)$ and fixed even $n$. 
\end{thm}
\begin{proof}
Let $g(z)= \varphi_{4,c}^n(z)-z = \varphi_{4,c}(\varphi_{4,c}^{n-1}(z)) - z = (\varphi_{4,c}^{n-1}(z))^4 - z + c$, and so $g(z)= (\varphi_{4,c}^{n-1}(z))^4 - z + c$. Now observe that applying the  multinomial theorem repeatedly on the term $(\varphi_{4,c}^{n-1}(z))^4$ right after applying the binomial theorem on the polynomial $(z^4 + c)^4$, we then obtain that $(\varphi_{4,c}^{n-1}(z))^4$ is a monic polynomial in $z$ of degree $4^n$ with integer coefficients in multiples of $c$. Thus, we may then write $(\varphi_{4,c}^{n-1}(z))^4 = z^{4^{n}} + h(z)$, where $h(z)$ is a non-constant polynomial in $z$ of deg$(h)<4^n$ with integer coefficients in multiples of $c$; and so we then obtain that $g(z)= z^{4^{n}} + h(z) - z + c$. Now for every coefficient $c=5t$, reducing both sides of $g(z)$ modulo $5$, we then obtain that $g(z)\equiv z^{4^n} - z$ (mod $5$), since we've also noted $h(z)\in c\mathbb{Z}[z]$ and so $h(z)\equiv 0$ (mod $5$); and so the reduced polynomial $g(z)$ modulo $5$ is now a polynomial defined over a finite field $\mathbb{Z}\slash 5\mathbb{Z}$ of $5$ distinct elements. So now, recall by Fermat's Little Theorem (FLT) that $z^4 = 1$ for every nonzero $z\in \mathbb{Z}\slash5\mathbb{Z}$, it then also follows $z^{4^n}= (z^4)^{4^{n-1}} = 1$ for every element $z\in (\mathbb{Z}\slash5\mathbb{Z})^{\times}=\mathbb{Z}\slash5\mathbb{Z}\setminus \{0\}$ and for every fixed $n\in \mathbb{Z}_{\geq 2}$. But then the reduced polynomial $g(z)\equiv 1 - z$ (mod $5$) for every point $z\in (\mathbb{Z}\slash5\mathbb{Z})^{\times}$ and so $g(z)$ has a root in $\mathbb{Z}\slash 5\mathbb{Z}$, namely, $z\equiv 1$ (mod $5$). Moreover, since $z$ is also a linear factor of $g(z)\equiv z(z^{4^n-1} - 1)$ (mod $5$) for every fixed integer $n\in \mathbb{Z}_{\geq 2}$, it then follows $z\equiv 0$ (mod $5$) is also root of $g(z)$ modulo $5$. But now, we then conclude $M_{c}^{(n)}(5) = 2$. To see $M_{c}^{(n)}(5) = 1$ for every coefficient $c\equiv 1$ (mod $5$) and for every fixed $n\in \mathbb{Z}_{\geq 2}$, we first note that writing $\varphi_{4,c}^{n-1}(z) = \underbrace{((((z^4 + c)^4 + c)^4 + c)^4 + \cdots + c)^4 + c}_\text{$(n-1)$ times}$ and then after each iteration, we reduce modulo $5$ along with the condition that $c\equiv 1$ (mod $5$) and also with $z^4 = 1$ for every $z\in (\mathbb{Z}\slash5\mathbb{Z})^{\times}$, it then follows $\varphi_{4,c}^{n-1}(z)\equiv 2$ (mod $5$) and $(\varphi_{4,c}^{n-1}(z))^4\equiv 1$ (mod $5$) for every fixed $n$. But then $g(z)=(\varphi_{4,c}^{n-1}(z))^4 - z + c\equiv 2-z$ (mod $5$) for every $z\in (\mathbb{Z}\slash5\mathbb{Z})^{\times}$ and so $g(z)$ modulo 5 has a root in $\mathbb{Z}\slash5\mathbb{Z}$, namely, $z\equiv 2$ (mod 5); from which we then conclude $M_{c}^{(n)}(5) = 1$. We now show $M_{c}^{(n)}(5) = 1$ for every coefficient $c\equiv -1$ (mod $5$) and fixed even integer $n\geq 2$. As before, we first note that since $c\equiv -1$ (mod $5$) and $z^4 = 1$ for every $z\in (\mathbb{Z}\slash5\mathbb{Z})^{\times}$, then reducing $\varphi_{4,c}^n(z)$ modulo $5$ after each iteration, we then obtain $\varphi_{4,c}^{n}(z)\equiv -1$ (mod $5$) for every fixed even $n\in \mathbb{Z}_{\geq 2}$. But then $g(z)= \varphi_{4,c}^n(z)-z \equiv -(1+z)$ (mod $5$) for every point $z\in (\mathbb{Z}\slash5\mathbb{Z})^{\times}$ and for every fixed even $n$ and thus $g(z)$ modulo $5$ has a root in $\mathbb{Z}\slash5\mathbb{Z}$, namely, $z\equiv -1$ (mod $5$); from which we then conclude $M_{c}^{(n)}(5) = 1$ as before. 

Finally, we now show $M_{c}^{(n)}(5) = 0$ for every coefficient $c \equiv -1$ (mod $5$) and for every fixed odd integer $n\geq 3$ or for every coefficient $c\not \equiv \pm1, 0$ (mod $5$) and every fixed even integer $n\geq 2$. Since $c\equiv -1$ (mod $5$) and also know $z^4 = 1$ for every $z\in (\mathbb{Z}\slash5\mathbb{Z})^{\times}$, then reducing $\varphi_{4,c}^n(z)$ modulo $5$ after each iteration, we then obtain that $\varphi_{4,c}^{n}(z)\equiv 0$ (mod $5$) for every fixed odd $n\in \mathbb{Z}_{\geq 3}$. But then $g(z)= \varphi_{4,c}^n(z)-z \equiv -z$ (mod $5$) for every $z\in (\mathbb{Z}\slash5\mathbb{Z})^{\times}$ and for every $c\equiv -1$ (mod $5$) and for every fixed odd $n$. But now observe $z\equiv 0$ (mod $5$) is a root of $g(z)$ modulo $5$ for every coefficient $c\equiv -1$ (mod $5$) and for every fixed odd $n$, and also $z\equiv 0$ (mod $5$) is also root of $g(z)$ modulo $5$ for every coefficient $c\equiv 0$ (mod $5$) and for every fixed odd $n$ as seen from the first part; from which we then obtain a contradiction that $-1 \equiv 0$ (mod $5$). Hence, it then follows $\varphi_{4,c}^n(x)-x$ has no roots in $\mathbb{Z} / 5\mathbb{Z}$ for every coefficient $c\equiv -1$ (mod $5$) and for every fixed odd $n\in \mathbb{Z}_{\geq 3}$; and so we conclude $M_{c}^{(n)}(5) = 0$. Otherwise, suppose $g(z)\equiv 0$ (mod $5$) for some $z\in (\mathbb{Z}\slash5\mathbb{Z})^{\times}$ and for every $c\not \equiv \pm1, 0$ (mod $5$) and every fixed even $n\in \mathbb{Z}_{\geq 2}$; and so $z^{4^n} + h(z)-z+c\equiv 0$ (mod $5$). So then, since $z^{4^n}=1$ for every $z\in (\mathbb{Z}\slash5\mathbb{Z})^{\times}$ and every fixed even $n$, we may write $z^{4^n} - z + h(z)+c\equiv 0$ (mod $5$) to obtain $1-z + h(z)+c\equiv 0$ (mod $5$). But now we note that $(1-z) + (h(z)+c)\equiv 0$ (mod $5$) can happen if $1-z\equiv 0$ (mod $5$) and also $h(z)+c\equiv 0$ (mod $5$). Moreover, recall from the first part that $1-z\equiv 0$ (mod $5$) when $c\equiv 0$ (mod $5$); which then contradicts $c\not \equiv \pm1, 0$ (mod $5$). This then means $g(x)=\varphi_{4,c}^n(x)-x$ has no roots in $\mathbb{Z} / 5\mathbb{Z}$ for every $c\not \equiv \pm1, 0$ (mod $5$) and every fixed even $n\in \mathbb{Z}_{\geq 2}$; and so we conclude $M_{c}^{(n)}(5) = 0$. This then completes the whole proof, as needed.
\end{proof} 
We now wish to generalize Theorem \ref{3.1} to any map $\varphi_{p-1, c}$ for any given prime $p\geq 5$. More precisely, we prove that the number of distinct $n$-periodic integral points of any map $\varphi_{p-1, c}$ modulo $p$ is also $1$ or $2$ or $0$:

\begin{thm} \label{3.2}
Let $p\geq 5$ be any fixed prime, and $\varphi_{p-1, c}$ be defined by $\varphi_{p-1, c}(z) = z^{p-1}+c$ for all $c, z\in\mathbb{Z}$. Let $M_{c}^{(n)}(p)$ be as in \textnormal{(\ref{M_{c}})}. Then $M_{c}^{(n)}(p) = 1$ for any $c\equiv \pm 1 \ (mod \ p)$ and fixed (even) $n$ or $M_{c}^{(n)}(p) = 2$ for any $c = pt$; otherwise $M_{c}^{(n)}(p) = 0$ for any $c \equiv -1\ (mod \ p)$ and fixed odd $n$ or $c\not \equiv \pm 1, 0 \ (mod \ p)$ and fixed even $n$.
\end{thm}
\begin{proof}
By applying a similar argument as in the Proof of Theorem \ref{3.1}, we then obtain the count as desired. That is, let $g(z)= \varphi_{p-1,c}^n(z)-z = \varphi_{p-1,c}(\varphi_{p-1,c}^{n-1}(z)) - z = (\varphi_{p-1,c}^{n-1}(z))^{p-1} - z + c$, and so $g(z)= (\varphi_{p-1,c}^{n-1}(z))^{p-1} - z + c$. Now as before, observe that applying the multinomial theorem repeatedly on $(\varphi_{p-1,c}^{n-1}(z))^{p-1}$ right after applying the binomial theorem on $(z^{p-1} + c)^{p-1}$, it then follows $(\varphi_{p-1,c}^{n-1}(z))^{p-1}$ is a monic polynomial in $z$ of degree $(p-1)^n$ with integer coefficients in multiples of $c$. Hence, we may then write $(\varphi_{p-1,c}^{n-1}(z))^{p-1} = z^{(p-1)^{n}} + h(z)$, where $h(z)$ is a non-constant polynomial in $z$ of deg$(h)<(p-1)^n$ with integer coefficients in multiples of $c$; and so we then write $g(z)= z^{(p-1)^{n}} + h(z) - z + c$. So now, for every coefficient $c=pt$, reducing $g(z)$ modulo $p$, it then follows $g(z)\equiv z^{(p-1)^n} - z$ (mod $p$), since also $h(z)\in c\mathbb{Z}[z]$ and thus $h(z)\equiv 0$ (mod $p$); and so the reduced polynomial $g(z)$ modulo $p$ is now a polynomial defined over a finite field $\mathbb{Z}\slash p\mathbb{Z}$ of order $p$. Now recall by (FLT) that $z^{p-1} = 1$ for every element $z\in (\mathbb{Z}\slash p\mathbb{Z})^{\times}=\mathbb{Z}\slash p\mathbb{Z}\setminus \{0\}$, it then also follows $(z^{p-1})^{(p-1)^{n-1}}= 1$ for every element $z\in (\mathbb{Z}\slash p\mathbb{Z})^{\times}$ and for every fixed $n\in \mathbb{Z}_{\geq 2}$. But then $g(z)\equiv 1 - z$ (mod $p$) for every point $z\in (\mathbb{Z}\slash p\mathbb{Z})^{\times}$, and so $g(z)$ has a root in $\mathbb{Z}\slash p\mathbb{Z}$, namely, $z\equiv 1$ (mod $p$). Moreover, since $z$ is also a linear factor of $g(z)\equiv z(z^{(p-1)^n-1} - 1)$ (mod $p$), then $z\equiv 0$ (mod $p$) is also root of $g(z)$ modulo $p$. But now, we then conclude $M_{c}^{(n)}(p) = 2$. We now show $M_{c}^{(n)}(p) = 1$ for every coefficient $c\equiv 1$ (mod $p$) and for every fixed $n\in \mathbb{Z}_{\geq 2}$. Note that writing $\varphi_{p-1,c}^{n-1}(z) = \underbrace{((((z^{p-1} + c)^{p-1} + c)^{p-1} + c)^{p-1} + \cdots + c)^{p-1} + c}_\text{$(n-1)$ times}$ and then after each iteration, we reduce modulo $p$ along with $c\equiv 1$ (mod $p$) and $z^{p-1} = 1$ for every $z\in (\mathbb{Z}\slash p\mathbb{Z})^{\times}$, it then follows $\varphi_{p-1,c}^{n-1}(z)\equiv 2$ (mod $p$) and $(\varphi_{p-1,c}^{n-1}(z))^{p-1}\equiv 1$ (mod $p$) for every fixed $n$. But then $g(z)=(\varphi_{p-1,c}^{n-1}(z))^{p-1} - z + c\equiv 2-z$ (mod $p$) for every $z\in (\mathbb{Z}\slash p\mathbb{Z})^{\times}$ and for every fixed $n$; from which it then follows that $g(z)$ modulo $p$ has a root in $\mathbb{Z}\slash p\mathbb{Z}$, namely, $z\equiv 2$ (mod $p$); and so we conclude $M_{c}^{(n)}(p) = 1$. We now show $M_{c}^{(n)}(p) = 1$ for every coefficient $c\equiv -1$ (mod $p$) and for every fixed even $n\in \mathbb{Z}_{\geq 2}$. As before, since $c\equiv -1$ (mod $p$) and also $z^{p-1} = 1$ for every $z\in (\mathbb{Z}\slash p\mathbb{Z})^{\times}$, reducing $\varphi_{p-1,c}^n(z)$ modulo $p$ after each iteration, it then follows $\varphi_{p-1,c}^{n}(z)\equiv -1$ (mod $p$) for every fixed even $n$. But then $g(z)= \varphi_{p-1,c}^n(z)-z \equiv -(1+z)$ (mod $p$) for every point $z\in (\mathbb{Z}\slash p\mathbb{Z})^{\times}$; and so $g(z)$ modulo $p$ has a root in $\mathbb{Z}\slash p\mathbb{Z}$, namely, $z\equiv -1$ (mod $p$). But now, as before we then conclude $M_{c}^{(n)}(p) = 1$. 

Finally, we now show $M_{c}^{(n)}(p) = 0$ for every coefficient $c \equiv -1$ (mod $p$) and every fixed odd $n\in \mathbb{Z}_{\geq 3}$ or every coefficient $c\not \equiv \pm1, 0$ (mod $p$) and every fixed even $n\in \mathbb{Z}_{\geq 2}$. As before, since $c\equiv -1$ (mod $p$) and $z^{p-1} = 1$ for every $z\in (\mathbb{Z}\slash p\mathbb{Z})^{\times}$, reducing $\varphi_{p-1,c}^n(z)$ modulo $p$ after each iteration, it then follows $\varphi_{p-1,c}^{n}(z)\equiv 0$ (mod $p$) for every fixed odd $n$. But then $g(z)= \varphi_{p-1,c}^n(z)-z \equiv -z$ (mod $p$) for every $z\in (\mathbb{Z}\slash p\mathbb{Z})^{\times}$ and for every $c \equiv -1$ (mod $p$) and for every fixed odd $n$. But now $z\equiv 0$ (mod $p$) is a root of $g(z)$ modulo $p$ for every $c\equiv -1$ (mod $p$) and every fixed odd $n$, and also $z\equiv 0$ (mod $p$) is also root of $g(z)$ modulo $p$ for every $c\equiv 0$ (mod $p$) and every fixed odd $n$ as seen from the first part; from which we then obtain a contradiction that $-1 \equiv 0$ (mod $p$). Thus, it then follows $\varphi_{p-1,c}^n(x)-x$ has no roots in $\mathbb{Z} / p\mathbb{Z}$ for every $c\equiv -1$ (mod $p$) and every fixed odd $n$; and so we conclude $M_{c}^{(n)}(p) = 0$. Otherwise, if $g(z)\equiv 0$ (mod $p$) for some $z\in (\mathbb{Z}\slash p\mathbb{Z})^{\times}$ and every $c\not \equiv \pm1, 0$ (mod $p$) and fixed even $n\in \mathbb{Z}_{\geq 2}$; and so $z^{(p-1)^n} + h(z)-z+c\equiv 0$ (mod $p$). So then, since $z^{(p-1)^n}=1$ for every $z\in \mathbb{Z}\slash p\mathbb{Z}\setminus \{0\}$ and fixed even $n$, then $z^{(p-1)^n} -z + h(z) +c\equiv 0$ (mod $p$) becomes $1-z + h(z)+c\equiv 0$ (mod $p$). But now we note that $(1-z) + (h(z)+c)\equiv 0$ (mod $p$) can happen if $1-z\equiv 0$ (mod $p$) and also $h(z)+c\equiv 0$ (mod $p$). Moreover, recall from the first part that $1-z\equiv 0$ (mod $p$) when $c\equiv 0$ (mod $p$); which then contradicts $c\not \equiv \pm1, 0$ (mod $p$). It then follows $g(x)=\varphi_{p-1,c}^n(x)-x$ has no roots in $\mathbb{Z} / p\mathbb{Z}$ for every $c\not \equiv \pm1, 0$ (mod $p$) and every fixed even $n\in \mathbb{Z}_{\geq 2}$; and so we conclude $M_{c}^{(n)}(p) = 0$. This then completes the whole proof, as needed.
\end{proof}

\begin{rem}
With now Theorem \ref{3.2}, we then to each distinct $n$-periodic integral point of $\varphi_{p-1,c}$ associate $n$-periodic integral orbit. In doing so, we then also obtain a dynamical translation of Theorem \ref{3.2}, namely, that for any fixed (period) $n\geq 2$, the number of distinct $n$-periodic integral orbits that any $\varphi_{p-1,c}$ has when iterated on $\mathbb{Z} / p\mathbb{Z}$ is equal to $1$ or $2$ or $0$. As noted in Intro.\ref{sec1} that the count obtained in Theorem \ref{3.2} on the number of distinct $n$-periodic integral points of any $\varphi_{p-1,c}$ modulo $p$ is independent of $p$ (and so independent of deg$(\varphi_{p-1,c}))$ in each of the possibilities. Moreover, we may also observe that the expected total count (namely, $1 + 2 + 0 =3$ for any fixed odd period $n\in \mathbb{Z}_{\geq 3}$ or $1 + 1 + 2 + 0 =4$ for any fixed even period $n\in \mathbb{Z}_{\geq 2}$) of the number of distinct $n$-periodic integral points (orbits) in the whole family of maps $\varphi_{p-1,c}$ modulo $p$ is not only also independent of $p$ (and so independent of deg$(\varphi_{p-1,c})$), but is also equal to $3$ or $4$ as $p-1\to \infty$; a somewhat interesting phenomenon differing significantly from a phenomenon that we remarked about in \ref{rem2.3}, however, coinciding somewhat with an observation that we made in [\cite{BK1}, Remark 3.3]. Additionally, notice when $n=2$, the expected total count in Theorem \ref{3.2} on the number of distinct $2$-periodic integral points of any $\varphi_{p-1,c}$ modulo $p$ is equal to $1 + 1 + 2 + 0 =4$; coinciding somewhat with an observation noted in Remark \ref{ht}. Moreover, recall in [\cite{BK1}, Proof of Theorem 3.2] and in Proof of Theorem \ref{3.2} that $z\equiv 1, 0, 2$ (mod $p$) are fixed and also $2$-periodic integral points of $\varphi_{p-1,c}$ modulo $p$. It may then follow from that the expected total number of distinct fixed and $2$-periodic integral points in the whole family of maps $\varphi_{p-1,c}$ modulo $p$ is 4; a somewhat interesting observation coinciding with the first predicted upper bound 4 observed in Remark \ref{ht} on Conj.\ref{conjecture 3.2.1}.
\end{rem}

\begin{rem}
Recall in [\cite{BK1}, Section 3] we proved that the fixed point-counting function $M_{c}(p) = 1, 2$ or $0$ for every fixed prime $p$ and for every coefficient $c\equiv 1, 0$ (mod $p$) or $c\equiv -1$ (mod $p$). Now recall in proving Theorem \ref{3.2}, we found that for any fixed odd period $n\geq 3$, $z\equiv 1, 0, 2$ (mod $p$) are $n$-periodic integral points of any $\varphi_{p-1,c}$ modulo $p$ (which mind you also showed up as fixed integral points of $\varphi_{p-1,c}$ modulo $p$ in \cite{BK1}); from which we then also concluded that the function $M_{c}^{(n)}(p)  = 1, 2$ or $0$ for every fixed prime $p$ and every coefficient $c\equiv 1, 0$ (mod $p$) or $c\equiv -1$ (mod $p$). But now for every fixed odd (period) $n\geq 3$, we note that $M_{c}^{(n)}(p) = M_{c}(p) = 1, 2$ or $0$ for every fixed prime $p$ and for every $c\equiv 1, 0$ (mod $p$) or $c\equiv -1$ (mod $p$). Consequently, we now note that for any fixed odd period $n\geq 1$, every $n$-periodic integral orbit of $\varphi_{p-1,c}$ modulo $p$ is a fixed integral orbit; and moreover every $\varphi_{p-1,c}$ modulo $p$ may have one or two or no fixed integral orbits; a somewhat interesting precise arithmetic-geometric insight on all odd $n$-periodic integral orbits of any $\varphi_{p-1,c}$ modulo $p$. Furthermore, recall in proving Theorem \ref{3.2}, we also found that for any fixed even period $n\geq 2$, the points $z\equiv 1, 0, 2, -1$ (mod $p$) are $n$-periodic integral points of any $\varphi_{p-1,c}$ modulo $p$; from which we then concluded that the function $M_{c}^{(n)}(p)  = 1, 2$ or $0$ for every fixed prime $p$ and for every $c\equiv \pm 1, 0$ (mod $p$) or $c\not \equiv \pm 1, 0$ (mod $p$). But now for every fixed even (period) $n\geq 4$, we also note that the counting function $M_{c}^{(n)}(p) = M_{c}^{(2)}(p) = 1, 2$ or $0$ for every fixed prime $p$ and every $c\equiv \pm 1, 0$ (mod $p$) or $c\not \equiv \pm 1, 0$ (mod $p$). As before, we now also note that for any fixed even period $n\geq 2$, every $n$-periodic integral orbit of $\varphi_{p-1,c}$ modulo $p$ is a $2$-periodic integral orbit; and moreover every $\varphi_{p-1,c}$ modulo $p$ may have one or two or no $2$-periodic integral orbits; another somewhat interesting precise arithmetic-geometric insight on all even $n$-periodic integral orbits of every $\varphi_{p-1,c}$ modulo $p$.    
\end{rem}

\section{Dynamical Complexity of Forward Periodic Orbit Structure of any $\varphi_{p,c}$ and $\varphi_{p-1,c}$}\label{sec4a}
Observe in Theorem \ref{2.2} that the number $N_{c}^{(n)}(p)$ is independent of (period) $n$, and so we may then have 
\begin{center}
    $\lim\limits_{n\to \infty} N_{c}^{(n)}(p) = p$ or $0$.
\end{center}

Now as we mentioned in the first opening article \cite{BK1} of this ongoing multi-part series, that one of the main objectives in classical dynamical systems is to understand \textit{all} orbits usually via topological and analytic techniques. In doing so, one may not only find that orbits can easily get very complicated, but also one may easily find that important and interesting statistical questions (for instance, determining \say{\textit{topological entropy}}) concerning measuring the complexity of the underlying system, may become intractable. (Recall loosely speaking that \say{\textit{topological entropy}} is a nonnegative statistic that measures the exponential growth rate of distinguishable orbits of a dynamical system as time progresses. The interested reader may read in more great details about topological entropy in work of Adler \cite{Adl} and Bowen \cite{Ruf}). So now, motivated by such a statistic and moreover since we may also now view $N_{c}^{(n)}(p)$ as now the $n$-periodic orbit-counting function, we in this section  wish to study very mildly the complexity of our polynomial discrete dynamical system $(\mathbb{Z}\slash p\mathbb{Z}$, $\varphi_{p,c}$ modulo $p$). We do so via analyzing the behavior of a classically important statistic from dynamical systems, namely, the exponential growth rate $\rho(\varphi_{p,c})$ [\cite{Kat}, Page 196]; and which then shows $N_{c}^{(n)}(p)$ grows by a factor $1=e^{\rho(\varphi_{p,c})}$ as period $n\to \infty$ and so the $n$-periodic orbit-counting function $N_{c}^{(n)}(p)$ is indeed a constant function. That is, we have: 
\begin{cor}\label{4.1}
Assume Theorem \ref{2.2}, and let $n\geq 2$ be any period. Then the exponential growth rate of $n$-periodic orbit-counting function $N_{c}^{(n)}(p)$ exists and is equal to zero. More precisely, we have 

\begin{center}
    $\rho(\varphi_{p,c}) := \limsup\limits_{n\to \infty}\frac{\text{log }(\text{max} \{N_{c}^{(n)}(p),1\})}{n} = 0$.
\end{center} 
\end{cor}
\begin{proof}
Since we know from Theorem \ref{2.2} that the number $N_{c}^{(n)}(p) = p\text{ or } 0$ for every fixed (period) $n\geq 2$, we then obtain $\frac{\text{log }(\text{max} \{N_{c}^{(n)}(p),1\})}{n} = \frac{\text{log }p}{n}$ or $0$. But now letting $n\to \infty$, we then obtain $\rho(\varphi_{p,c}) = 0$, as desired.
\end{proof}

Similarly, we may also observe in Theorem \ref{3.2} that $M_{c}^{(n)}(p)$ is also independent of (period) $n$, and so  
\begin{center}
    $\lim\limits_{n\to \infty} M_{c}^{(n)}(p) = 1, 2 \text{ or } 0.$
\end{center}As before, we also wish to study very mildly the complexity of the system $(\mathbb{Z}\slash p\mathbb{Z}$, $\varphi_{p-1,c}$ modulo $p$) by again determining the behavior of the associated exponential growth rate $\rho(\varphi_{p-1,c})$. With that mind, we then also obtain $M_{c}^{(n)}(p)$ grows by a factor $1=e^{\rho(\varphi_{p-1,c})}$ as $n\to \infty$ and so $M_{c}^{(n)}(p)$ is also a constant function. That is:
\begin{cor}
Assume Theorem \ref{3.2}, and let $n\geq 2$ be any period. Then the exponential growth rate of $n$-periodic orbit-counting function $M_{c}^{(n)}(p)$ exists and is equal to zero. More precisely, we have 
\begin{center}
    $\rho(\varphi_{p-1,c}) := \limsup\limits_{n\to \infty}\frac{\text{log }(\text{max} \{M_{c}^{(n)}(p),1\})}{n} = 0$.
\end{center}
\end{cor}
\begin{proof}
Applying a similar argument as in the Proof of Corollary \ref{4.1}, we then obtain $\rho(\varphi_{p-1,c})=0$, as desired.
\end{proof}

\section{On the Average Number of $n$-Periodic Points of any Polynomial Map $\varphi_{p,c}$ \& $\varphi_{p-1,c}$ }\label{sec4}

In this section, we wish to inspect independently the behavior of the counting functions $N_{c}^{(n)}(p)$ and $M_{c}^{(n)}(p)$ as $c\to \infty$. First, we wish to determine: \say{\textit{For any fixed (period) $n\in \mathbb{Z}_{\geq 2}$, what is the average value of $N_{c}^{(n)}(p)$ as $c \to \infty$?}} The following corollary shows that the average value of $N_{c}^{(n)}(p)$ is zero or unbounded as $c\to \infty$:
\begin{cor}\label{cor5}
Let $p\geq 3$ be any prime integer, and let $n\geq 2$ be any fixed (period). Then the average value of $n$-periodic point-counting function $N_{c}^{(n)}(p)$ is either zero or unbounded as $c\to\infty$. \textit{More precisely, we have} 
\begin{myitemize}
    \item[\textnormal{(a)}] \textnormal{Avg} $N_{c \neq pt}^{(n)}(p) := \lim\limits_{c\to\infty} \Large{\frac{\sum\limits_{3\leq p\leq c, \ p\nmid c}N_{c}^{(n)}(p)}{\Large{\sum\limits_{3\leq p\leq c, \ p\nmid c}1}}} = 0.$  
    
    \item[\textnormal{(b)}] \textnormal{Avg} $N_{c = pt}^{(n)}(p):= \lim\limits_{c \to\infty} \Large{\frac{\sum\limits_{3\leq p\leq c, \ p\mid c}N_{c}^{(n)}(p)}{\Large{\sum\limits_{3\leq p\leq c, \ p\mid c}1}}} =  \infty$.    
\end{myitemize}

\end{cor}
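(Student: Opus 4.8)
The plan is to read both limits straight off Theorem \ref{2.2}, which determines $N_c^{(2)}(p)$ completely: it equals $p$ when $p\mid c$ and $0$ when $p\nmid c$. So the corollary is really just the arithmetic of averaging these two values, and no new analytic input is needed beyond the infinitude of primes (and, if one wants the denominator in (a) to tend to infinity, the prime number theorem, though mere positivity suffices).

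For part (a), I would observe that the "$c\neq pt$" case of Theorem \ref{2.2} forces every summand of $\sum_{3\le p\le c,\ p\nmid c} N_c^{(2)}(p)$ to be $0$, so the numerator vanishes identically, while the denominator $\sum_{3\le p\le c,\ p\nmid c}1$ — the number of odd primes up to $c$ that do not divide $c$ — is a positive integer for all $c\ge 3$. Hence the quotient is $0$ for every $c\ge 3$, and the limit is $0$. For part (b), the "$c=pt$" case of Theorem \ref{2.2} makes each summand equal to $p$, so the quotient is exactly the arithmetic mean $A(c)$ of the odd primes $p\le c$ dividing $c$. I would then show $A(c)$ is unbounded by evaluating along the subsequence $c=q$ with $q$ an odd prime: the only prime $p$ with $3\le p\le q$ and $p\mid q$ is $p=q$ itself, so the numerator is $q$, the denominator is $1$, and $A(q)=q\to\infty$. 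This exhibits the claimed unboundedness, and read as a statement about the limit superior it gives $\limsup_{c\to\infty}A(c)=\infty$.

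The only place calling for a word of care — and the closest thing to an obstacle — is the book-keeping in part (b): for $c$ with no odd prime factor $\le c$ (e.g.\ $c$ a power of $2$, or $c=1$) the index set is empty and $A(c)$ is undefined, and even when $A(c)$ is defined it oscillates (it equals $3$ whenever $c$ is a power of $3$). So the displayed limit in (b) must be interpreted either as taken over those $c$ for which the sum is nonempty, or simply as the assertion that the quantity is unbounded as $c\to\infty$; under either reading the subsequence $c=q$ prime closes the argument. I would state this convention explicitly before invoking it, mirroring the treatment of the analogous averages in \cite{BK1, BK3}.
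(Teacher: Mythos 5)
Your part (a) coincides with the paper's: both observe that every summand vanishes by Theorem \ref{2.2}, so the quotient is identically zero. For part (b) you take a genuinely different, and in fact sounder, route. The paper writes the quotient as $\sigma_{1,p}(c)/\omega(c)$ with $\sigma_{1,p}(c)=\sum_{3\le p\le c,\ p\mid c}p$ and $\omega(c)=\sum_{3\le p\le c,\ p\mid c}1$, then bounds $\sigma_{1,p}(c)$ \emph{above} by $\sum_{3\le p\le c}p\sim \tfrac{c^2}{2}\log c$ and $\omega(c)$ above by $\log c/\log 2$, and concludes that the numerator grows ``very fast'' while the denominator grows ``very slow,'' hence the ratio tends to infinity. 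That inference is not valid: an upper bound tending to infinity says nothing about the quantity itself (indeed $\sigma_{1,p}(3^k)=3$ and $\omega(3^k)=1$ for all $k$, so the ratio is constantly $3$ along that subsequence, exactly the oscillation you point out). Your argument instead evaluates the quotient along the subsequence $c=q$ prime, where it equals $q$, which rigorously establishes unboundedness (equivalently $\limsup=\infty$) with no analytic input at all. You are also right to flag that the displayed limit in (b) does not literally exist and that the index set can be empty (powers of $2$); the paper does not address either point, and your proposed convention --- reading (b) as an unboundedness statement or restricting to $c$ with a nonempty index set --- is the minimal repair needed to make the corollary true as an assertion about averages. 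In short: same decomposition into the two cases of Theorem \ref{2.2}, but your treatment of the divergent case replaces the paper's growth-rate heuristic with a correct subsequence argument and supplies the missing caveats.
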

\begin{proof}
By applying a similar argument as in [\cite{BK3}, Proof of Cor. 7.1], we then obtain the limits, as desired.
\end{proof} 
\begin{rem} \label{re5}
From arithmetic statistics to arithmetic dynamics, we note that Corollary \ref{cor5} shows that any $\varphi_{p,c}$ iterated on the space $\mathbb{Z} / p\mathbb{Z}$ has on average zero or unbounded number of $n$-periodic integral orbits modulo $p$ as $c\to \infty$; a somewhat interesting averaging phenomenon coinciding precisely with an averaging phenomenon in [\cite{BK3}, Remark 7.2] on the average number of distinct fixed integral points of every map $\varphi_{p, c}$ iterated on $\mathbb{Z} / p\mathbb{Z}$.
\end{rem}

Similarly, we also wish to determine: \say{\textit{For any fixed (period) $n\in \mathbb{Z}_{\geq 2}$, what is the average value of $M_{c}^{(n)}(p)$ as $c \to \infty$?}} The following corollary shows that the average value of $M_{c}^{(n)}(p)$ is $1$ or $2$ or $0$ as $c\to \infty$:

\begin{cor}\label{cor4.3}
Let $p\geq 5$ be any prime integer, and let $n\geq 2$ be any fixed (period). Then the average value of $n$-periodic point-counting function $M_{c}^{(n)}(p)$ exists and is $1$ or $2$ or $0$ as $c\to\infty$. More precisely, we have 
\begin{myitemize}
    \item[\textnormal{(a)}] \textnormal{Avg} $M_{c\pm1 = pt}^{(n)}(p) := \lim\limits_{c\to\infty} \Large{\frac{\sum\limits_{5\leq p\leq c, \ p\mid (c\pm1)}M_{c}^{(n)}(p)}{\Large{\sum\limits_{5\leq p\leq c, \ p\mid (c\pm1)}1}}} = 1.$ 

    \item[\textnormal{(b)}] \textnormal{Avg} $M_{c= pt}^{(n)}(p) := \lim\limits_{c\to\infty} \Large{\frac{\sum\limits_{5\leq p\leq c, \ p\mid c}M_{c}^{(n)}(p)}{\Large{\sum\limits_{5\leq p\leq c, \ p\mid c}1}}} = 2.$
    
     \item[\textnormal{(c)}] \textnormal{Avg} $M_{c+1=pt, n =2k+1 \text{ or }c\not \equiv\pm1, 0 \ (\text{mod }p)}^{(n)}(p):= \lim\limits_{c \to\infty} \Large{\frac{\sum\limits_{5\leq p\leq c, \ c+1=pt, n =2k+1 \text{ or }c\not \equiv\pm1, 0 \ (\text{mod }p)}M_{c}^{(n)}(p)}{\Large{\sum\limits_{5\leq p\leq c, \ c+1=pt, n =2k+1 \text{ or }c\not \equiv\pm1, 0 \ (\text{mod }p)}1}}} =  0$.
\end{myitemize}

\end{cor}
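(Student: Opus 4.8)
The plan is to deduce all three averages directly from Theorem \ref{3.2}, which determines $M_{c}^{(2)}(p)$ \emph{exactly} according to the residue of $c$ modulo $p$: it equals $1$ when $c\equiv \pm 1 \pmod p$, it equals $2$ when $p\mid c$, and it equals $0$ otherwise. Each of the three index sets appearing in (a), (b), (c) is carved out by precisely one of these three congruence conditions, so on each such set the summand $M_{c}^{(2)}(p)$ is constant, and the average is just that constant.

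Concretely, for (a) I would first observe that the condition $p\mid (c\pm 1)$ is the same as $c\equiv \mp 1 \pmod p$, hence by Theorem \ref{3.2} every term in the numerator equals $1$; thus the numerator coincides term by term with the denominator $\sum_{5\le p\le c\pm 1,\, p\mid (c\pm 1)}1$, the quotient is identically $1$, and the limit is $1$. For (b), the condition $p\mid c$ is precisely the case $c=pt$ of Theorem \ref{3.2}, so every term in the numerator equals $2$, the numerator is twice the denominator $\sum_{5\le p\le c,\, p\mid c}1$, and the quotient is identically $2$. For (c), the condition $c\not\equiv \pm 1, 0 \pmod p$ is exactly the remaining case of Theorem \ref{3.2}, where $M_{c}^{(2)}(p)=0$; hence the numerator vanishes identically, and the quotient is $0$ as soon as the denominator is nonzero.

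The only point requiring a little care --- the \textbf{main obstacle}, such as it is --- is checking that each denominator is eventually positive, so that the quotient and its limit are genuinely well-defined. For (b) this is immediate once $c>1$, since then $c$ has at least one prime factor (and one should read the limit in (a) along the infinitely many $c$ for which $c\pm 1$ actually has a prime factor $\ge 5$, e.g.\ whenever $c\pm 1$ is itself such a prime). For (c) it follows from the crude bound that the primes $p\le c$ with $c\equiv 0,\pm 1 \pmod p$ number at most $\omega(c)+\omega(c-1)+\omega(c+1)=O(\log c)$, whereas there are $\sim c/\log c$ primes up to $c$ in total, so the denominator tends to infinity and in particular is positive for all large $c$. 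Granting this bookkeeping, each limit in (a)--(c) is the limit of a constant sequence, and the corollary follows at once; I would close by transferring the statement to the dynamical language of $2$-periodic integral orbits of $\varphi_{p-1,c}$ on $\mathbb{Z}/p\mathbb{Z}$, exactly as in Remark \ref{re3.3}.
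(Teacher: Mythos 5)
Your proposal is correct and follows essentially the same route as the paper's own proof: both arguments simply invoke Theorem \ref{3.2} to note that $M_{c}^{(2)}(p)$ is constant ($1$, $2$, or $0$) on each of the three index sets, so each quotient is identically that constant and the limits follow. The only addition in your write-up is the check that the denominators are eventually positive, which the paper omits but which is a worthwhile (and correct) piece of bookkeeping.
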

\begin{proof}
By applying a similar argument as in [\cite{BK1}, Proof of Cor. 4.3], we then obtain the limits, as desired.
\end{proof} 
\begin{rem} \label{re5.4}
As before, we again also note that from arithmetic statistics to arithmetic dynamics, Corollary \ref{cor4.3} shows that any $\varphi_{p-1,c}$ iterated on the space $\mathbb{Z} / p\mathbb{Z}$ has on average one or two or no $n$-periodic integral orbits as $c\to \infty$; a somewhat interesting averaging phenomenon coinciding with an averaging phenomenon in [\cite{BK1}, Remark 4.4] on the average number of distinct fixed integral points of any $\varphi_{p-1, c}$ iterated on the space $\mathbb{Z} / p\mathbb{Z}$.
\end{rem}

\section{On the Density of Monic Integer Polynomials $\varphi_{p,c}(x)$ with the Number $N^{(n)}_{c}(p) = p$}\label{sec5}

As in [\cite{BK3}, Section 9] we in section also wish to determine: \say{\textit{For a prime $p\geq 3$ and for any fixed period $n\in \mathbb{Z}_{\geq 2}$, what is the density of monic integer polynomials $\varphi_{p, c}(x) = x^p + c$ with exactly $p$ distinct $n$-periodic integral points modulo $p$?}} The following immediate corollary shows that for any fixed period $n\in \mathbb{Z}_{\geq 2}$, there are (as in \cite{BK3}) very few monic integer polynomials $\varphi_{p,c}(x) = x^p + c$ having only $p$ distinct $n$-periodic integral points modulo $p$:
\begin{cor}\label{5.1}
Let $p\geq 3$ be a prime integer. Then the density of monic polynomials $\varphi_{p,c}(x) = x^p + c\in \mathbb{Z}[x]$ having the number $N_{c}^{(n)}(p) = p$ exists and is equal to $0 \%$ as $c\to \infty$. More precisely, we have
\begin{center}
    $\lim\limits_{c\to\infty} \Large{\frac{\# \{\varphi_{p,c}(x) \in \mathbb{Z}[x]\ : \ 3\leq p\leq c \ and \ N_{c}^{(n)}(p) \ = \ p\}}{\Large{\# \{\varphi_{p,c}(x) \in \mathbb{Z}[x]\ : \ 3\leq p\leq c \}}}} = \ 0.$
\end{center}
\end{cor}
\begin{proof}
Since the defining condition $N_{c}^{(n)}(p) = p$ is as we proved in Theorem \ref{2.2}, determined whenever the coefficient $c$ is divisible by any prime $p\geq 3$, we may then count $\# \{\varphi_{p,c}(x) \in \mathbb{Z}[x] : 3\leq p\leq c \ \text{and} \ N_{c}^{(n)}(p) \ = \ p\}$ by counting the number $\# \{\varphi_{p,c}(x)\in \mathbb{Z}[x] : 3\leq p\leq c \ \text{and} \ p\mid c \ \text{for \ any \ fixed} \ c \}$. In that case, we then write 
\begin{center}
$\Large{\frac{\# \{\varphi_{p,c}(x) \in \mathbb{Z}[x] \ : \ 3\leq p\leq c \ \text{and} \ N_{c}^{(n)}(p) \ = \ p\}}{\Large{\# \{\varphi_{p,c}(x) \in \mathbb{Z}[x] \ : \ 3\leq p\leq c \}}}} = \Large{\frac{\# \{\varphi_{p,c}(x)\in \mathbb{Z}[x] \ : \ 3\leq p\leq c \ \text{and} \ p\mid c \ \text{for any fixed} \ c \}}{\Large{\# \{\varphi_{p,c}(x) \in \mathbb{Z}[x] \ : \ 3\leq p\leq c \}}}}$. 
\end{center}\indent Moreover, for any fixed integer $c\geq 3$, the numerator of the foregoing quotient may be rewritten to then obtain
\begin{center}
$\# \{\varphi_{p,c}(x) \in \mathbb{Z}[x] : 3\leq p\leq c \ \text{and} \ p\mid c \} = \# \{p : 3\leq p\leq c \text{ and } p\mid c \} = \sum_{3\leq p\leq c, \ p\mid c}1 = \omega (c)$, 
\end{center}where $\omega(n)$ is by definition the number of distinct prime factors of $n$. Writing $\# \{\varphi_{p,c}(x) \in \mathbb{Z}[x]  : 3\leq p\leq c \} = \sum_{3\leq p\leq c} 1 = \pi(c)$, where $\pi(n)$ is by definition the number of primes at most $n$, we then note that the quotient 
\begin{center}
$\Large{\frac{\# \{\varphi_{p,c}(x)\in \mathbb{Z}[x] \ : \ 3\leq p\leq c \ \text{and} \ p\mid c \ \text{for any fixed} \ c \}}{\Large{\# \{\varphi_{p,c}(x)\in \mathbb{Z}[x] \ : \ 3\leq p\leq c \}}}} = \frac{\omega(c)}{\pi(c)}$.
\end{center}So now, recall (from a well-known fact) that for any $c\in \mathbb{Z}_{\geq 3}$, we have $2^{\omega(c)}\leq \sigma (n) \leq 2^{\Omega(c)}$, where $\sigma(n)$ is by definition the divisor function and $\Omega(n)$ is by definition the total number of prime factors of $n$, with respect to their multiplicity. Note that taking logarithms, we then obtain $\omega(c)\leq \frac{\text{log} \ \sigma(c)}{\text{log} \ 2}$; and so $\frac{\omega(c)}{\pi(c)} \leq \frac{\text{log} \ \sigma(c)}{\text{log} \ 2 \cdot \pi(c)}$. Moreover, for every $\epsilon >0$, it is well-known that $\sigma(c) = o(c^{\epsilon})$; and so log $\sigma(c) =$ log $o(c^{\epsilon})$ and so have $\frac{\omega(c)}{\pi(c)} \leq \frac{\text{log} \ o(c^{\epsilon})}{\text{log} \ 2 \cdot \pi(c)}$. Now for every fixed $\epsilon>0$, we then note $\lim\limits_{c\to\infty} \frac{\text{log} \ o(c^{\epsilon})}{\text{log} \ 2 \cdot \pi(c)} = 0$ and so $\lim\limits_{c\to\infty} \frac{\omega(c)}{\pi(c)} \leq 0$. But now 
\begin{center}
$\lim\limits_{c\to\infty} \Large{\frac{\# \{\varphi_{p,c}(x)\in \mathbb{Z}[x] \ : \ 3\leq p\leq c \ \text{and} \ N_{c}^{(n)}(p) \ = \ p\}}{\Large{\# \{\varphi_{p,c}(x) \in \mathbb{Z}[x] \ : \ 3\leq p\leq c \}}}} =\lim\limits_{c\to\infty} \frac{\omega(c)}{\pi(c)} \leq 0$.
\end{center}Moreover, we also observe that the number $\# \{\varphi_{p,c}(x)\in \mathbb{Z}[x] : 3\leq p\leq c \ \text{and} \ N_{c}^{(n)}(p) \ = \ p\}\geq 1$, and so have 
\begin{center}
$\lim\limits_{c\to\infty}\Large{\frac{\# \{\varphi_{p,c}(x) \in \mathbb{Z}[x] \ : \ 3\leq p\leq c \ \text{and} \ N_{c}^{(n)}(p) \ = \ p\}}{\Large{\# \{\varphi_{p,c}(x) \in \mathbb{Z}[x] \ : \ 3\leq p\leq c \}}}}\geq \lim\limits_{c\to\infty}\frac{1}{\pi(c)} = 0$. But now, we then conclude that the limit 
\end{center}  $\lim\limits_{c\to\infty} \Large{\frac{\# \{\varphi_{p,c}(x) \in \mathbb{Z}[x] \ : \ 3\leq p\leq c \ \text{and} \ N_{c}^{(n)}(p) \ = \ p\}}{\Large{\# \{\varphi_{p,c}(x) \in \mathbb{Z}[x] \ : \ 3\leq p\leq c \}}}} = 0$ as needed. This then completes the whole proof, as desired.
\end{proof}\noindent Note that we may also interpret Corollary \ref{5.1} as saying that for any fixed period $n\in \mathbb{Z}_{\geq 2}$, the probability of choosing randomly a polynomial $\varphi_{p,c}(x)\in \mathbb{Z}[x]$ with $p$ distinct $n$-periodic integral points modulo $p$ is equal to zero; a somewhat interesting probabilistic phenomenon coinciding with a phenomenon remarked in [\cite{BK3}, Sect.9] on the probability of choosing randomly a monic $\varphi_{p,c}(x)\in \mathbb{Z}[x]$ with $p$ distinct fixed integral points modulo $p$.

\section{On Densities of Monic Integer Polynomials $\varphi_{p-1,c}(x)$ with $M_{c}^{(n)}(p) = 1$ \& $M_{c}^{(n)}(p) = 2$}\label{sec6}

As in Section \ref{sec5}, we in this section also wish to determine: \say{\textit{For a prime $p\geq 5$ and for any fixed period $n\geq 2$, what is the density of monic integer polynomials $\varphi_{p-1,c}(x) = x^{p-1}+c$ with exactly two distinct $n$-periodic integral points modulo $p$?}} The following corollary shows that for any fixed period $n\geq 2$, the density of monic integer polynomials $\varphi_{p-1,c}(x)=x^{p-1}+ c$ with exactly two distinct $n$-periodic integral points modulo $p$ is zero:
\begin{cor}\label{6.1}
Let $p\geq 5$ be a prime integer. The density of monic polynomials $\varphi_{p-1,c}(x)=x^{p-1}+c\in \mathbb{Z}[x]$ having the number $M_{c}^{(n)}(p) = 2$ exists and is equal to $0\%$ as $c\to \infty$. More precisely, we have 
\begin{center}
    $\lim\limits_{c\to\infty} \Large{\frac{\# \{\varphi_{p-1,c}(x) \in \mathbb{Z}[x]\ : \ 5\leq p\leq c \ and \ M_{c}^{(n)}(p) \ = \ 2\}}{\Large{\# \{\varphi_{p-1,c}(x) \in \mathbb{Z}[x]\ : \ 5\leq p\leq c \}}}} = \ 0.$
\end{center}
\end{cor}

\begin{proof}
As before, since $M_{c}^{(n)}(p) = 2$ is as we proved in Theorem \ref{3.2}, determined whenever the coefficient $c$ is divisible by $p$, hence, we may then count the number $\# \{\varphi_{p-1,c}(x) \in \mathbb{Z}[x] : 5\leq p\leq c \ \text{and} \ M_{c}^{(n)}(p) \ = \ 2\}$ by simply counting $\# \{\varphi_{p-1,c}(x)\in \mathbb{Z}[x] : 5\leq p\leq c \ \text{and} \ p\mid c \ \text{for \ any \ fixed} \ c \}$. But now applying a similar argument as in the Proof of Corollary \ref{5.1}, we then obtain that the limit exists and is equal to 0, as desired.
\end{proof} \noindent As before, we may also interpret Corollary \ref{6.1} as saying that for any fixed period $n\in \mathbb{Z}_{\geq 2}$, the probability of choosing randomly a monic polynomial $\varphi_{p-1,c}(x)\in \mathbb{Z}[x]$ with two $n$-periodic integral points modulo $p$ is zero; a somewhat interesting probabilistic phenomenon coinciding with a phenomenon remarked in [\cite{BK1}, Section 6] on the probability of choosing randomly a monic $\varphi_{p-1,c}(x)\in \mathbb{Z}[x]$ with two distinct fixed integral points modulo $p$.

The following corollary shows that for any fixed period $n\in \mathbb{Z}_{\geq 2}$, the probability of choosing randomly a monic polynomial $\varphi_{p-1,c}(x)\in \mathbb{Z}[x]$ having exactly one $n$-periodic integral point modulo $p$ is also equal to zero:

\begin{cor}\label{6.2}
Let $p\geq 5$ be a prime integer. The density of monic polynomials $\varphi_{p-1,c}(x)=x^{p-1}+c\in \mathbb{Z}[x]$ having the number $M_{c}^{(n)}(p) = 1$ exists and is equal to $0\%$ as $c\to \infty$. More precisely, we have  
\begin{center}
    $\lim\limits_{c\to\infty} \Large{\frac{\# \{\varphi_{p-1,c}(x) \in \mathbb{Z}[x]\ : \ 5\leq p\leq c \ and \ M_{c}^{(n)}(p) \ = \ 1\}}{\Large{\# \{\varphi_{p-1,c}(x) \in \mathbb{Z}[x]\ : \ 5\leq p\leq c \}}}} = \ 0.$
\end{center}
\end{cor}
\begin{proof}
As before, $M_{c}^{(n)}(p) = 1$ is as we proved in Theorem \ref{3.2}, determined whenever the coefficient $c$ is such that $c\pm1$ is divisible by any fixed prime $p\geq 5$; and so we may count $\# \{\varphi_{p-1,c}(x) \in \mathbb{Z}[x] : 5\leq p\leq c \ \text{and} \ M_{c}^{(n)}(p) \ = \ 1\}$ by simply counting the number $\# \{\varphi_{p-1,c}(x)\in \mathbb{Z}[x] : 5\leq p\leq c \ \text{and} \ p\mid (c\pm1) \ \text{for \ any \ fixed} \ c \}$. But now, since $c-1<c$, then if the number $\# \{p : 5\leq p\leq c \ \text{and} \ p\mid (c-1) \}< \# \{p : 5\leq p\leq c \ \text{and} \ p\mid c \}$, we then obtain that
\begin{center}
$\Large{\frac{\# \{\varphi_{p-1,c}(x) \in \mathbb{Z}[x] \ : \ 5\leq p\leq c \ \text{and} \ p\mid (c-1) \ \text{for any fixed} \ c\}}{\Large{\# \{\varphi_{p-1,c}(x) \in \mathbb{Z}[x] \ : \ 5\leq p\leq c \}}}} < \Large{\frac{\# \{\varphi_{p-1,c}(x)\in \mathbb{Z}[x] \ : \ 5\leq p\leq c \ \text{and} \ p\mid c \ \text{for any fixed} \ c \}}{\Large{\# \{\varphi_{p-1,c}(x) \in \mathbb{Z}[x] \ : \ 5\leq p\leq c \}}}}.$ 
\end{center}But now applying a similar argument as in [\cite{BK1}, Proof of Cor. 6.2], we then obtain that the limit is equal to $0$ as desired in this case. Otherwise, if the size $\# \{p : 5\leq p\leq c \ \text{and} \ p\mid c \}< \# \{p : 5\leq p\leq c \ \text{and} \ p\mid (c-1) \}$, then
\begin{center}
$\Large{\frac{\# \{\varphi_{p-1,c}(x) \in \mathbb{Z}[x] \ : \ 5\leq p\leq c \ \text{and} \ p\mid c \ \text{for any fixed} \ c\}}{\Large{\# \{\varphi_{p-1,c}(x) \in \mathbb{Z}[x] \ : \ 5\leq p\leq c \}}}} < \Large{\frac{\# \{\varphi_{p-1,c}(x)\in \mathbb{Z}[x] \ : \ 5\leq p\leq c \ \text{and} \ p\mid (c-1) \ \text{for any fixed} \ c \}}{\Large{\# \{\varphi_{p-1,c}(x) \in \mathbb{Z}[x] \ : \ 5\leq p\leq c \}}}}.$
\end{center}So now, taking limit as $c\to \infty$ on both sides of the above inequality and applying Corollary \ref{6.1} and then applying a similar argument as in the Proof of Corollary \ref{5.1} to obtain an upper bound zero, we then obtain
\begin{center}
$\lim\limits_{c\to\infty}\Large{\frac{\# \{\varphi_{p-1,c}(x) \in \mathbb{Z}[x] \ : \ 5\leq p\leq c \ \text{and} \ p\mid (c-1)\}}{\Large{\# \{\varphi_{p-1,c}(x) \in \mathbb{Z}[x] \ : \ 5\leq p\leq c \}}}} = 0 = \lim\limits_{c\to\infty}\Large{\frac{\# \{\varphi_{p-1,c}(x) \in \mathbb{Z}[x] \ : \ 5\leq p\leq c \ \text{and} \ p\mid (c+1)\}}{\Large{\# \{\varphi_{p-1,c}(x) \in \mathbb{Z}[x] \ : \ 5\leq p\leq c \}}}}$ 
\end{center}where the second limit follows from also observing  $c<c+1$ and then applying an argument that's very similar to the one that has been given in the case when $c-1<c$. This then completes the whole proof, as needed.
\end{proof}

\section{On Density of Integer Monics $\varphi_{p,c}(x)$ with $N_{c}^{(n)}(p) = 0$ and $\varphi_{p-1,c}(x)$ with $M_{c}^{(n)}(p) = 0$}\label{sec8}

Recall in Corollary \ref{5.1} that a density of $0\%$ of monic integer polynomials $\varphi_{p,c}(x)$ have $p$ distinct $n$-periodic integral points modulo $p$; and so for any fixed period $n\in \mathbb{Z}_{\geq 2}$, the density of monic integer polynomials $\varphi_{p,c}^n(x)-x$ that are reducible modulo $p$ is $0\%$. So now, we also wish to determine: \say{\textit{For a prime $p\geq 3$ and for any fixed $n\in \mathbb{Z}_{\geq 2}$, what is the density of monic integer polynomials $\varphi_{p,c}(x)$ with no $n$-periodic integral points modulo $p$?}} The following corollary shows that for any fixed integer $n\geq 2$, the probability of choosing randomly a monic integer polynomial $\varphi_{p,c}(x) = x^p+c$ so that $\mathbb{Q}[x]\slash (\varphi_{p, c}^n(x)-x)$ is a degree-$p^n$ number field is equal to 1:

\begin{cor} \label{7.1}
Let $p\geq 3$ be a prime integer. Then the density of monic polynomials $\varphi_{p,c}(x) = x^p + c\in \mathbb{Z}[x]$ having the number $N_{c}^{(n)}(p) = 0$ exists and is equal to $100 \%$ as $c\to \infty$. More precisely, we have
\begin{center}
    $\lim\limits_{c\to\infty} \Large{\frac{\# \{\varphi_{p, c}(x)\in \mathbb{Z}[x] \ : \ 3\leq p\leq c \ and \ N_{c}^{(n)}(p) \ = \ 0 \}}{\Large{\# \{\varphi_{p,c}(x) \in \mathbb{Z}[x] \ : \ 3\leq p\leq c \}}}} = \ 1.$
\end{center}
\end{cor}

\begin{proof}
Since the number $N_{c}^{(n)}(p) = p$ or $0$ for any given prime integer $p\geq 3$ and since we also proved the density in Corollary \ref{5.1}, we then obtain the desired density (i.e., we obtain that the limit exists and is equal to 1). 
\end{proof}

\noindent Note that the foregoing corollary also shows that for any fixed $n\in \mathbb{Z}_{\geq 2}$, there are infinitely many polynomials $\varphi_{p,c}(x)$ over $\mathbb{Z}$ such that for $f(x) = \varphi_{p,c}^n(x)-x$, the quotient $K_{f} = \mathbb{Q}[x]\slash (f(x))$ induced by $f$ is a number field of degree $m=p^n$. Comparing the densities in Corollaries \ref{5.1} and \ref{7.1}, we may then observe that in the whole family of monic integer polynomials $\varphi_{p,c}(x) = x^p +c$, almost all such monics have no $n$-periodic integral points modulo $p$; and from which it then follows that almost all monic integer polynomials $f(x)$ are irreducible over $\mathbb{Q}$. But this may also imply that the average value of $N_{c}^{(n)}(p)$ in the whole family of polynomials $\varphi_{p,c}(x)$ is zero.

Similarly, recall in Corollary \ref{6.1} or \ref{6.2} that a density of $0\%$ of monic integer polynomials $\varphi_{p-1,c}(x)$ have $M_{c}^{(n)}(p) = 2$ or $1$, resp.; and so for any fixed period $n\in \mathbb{Z}_{\geq 2}$, the density of polynomials $\varphi_{p-1, c}^n(x)-x\in \mathbb{Z}[x]$ that are reducible modulo $p$ is $0\%$. So now, we also wish to determine: \say{\textit{For a prime $p\geq 5$ and for any fixed $n\in \mathbb{Z}_{\geq 2}$, what is the density of monic integer polynomials $\varphi_{p-1,c}(x)$ with no $n$-periodic integral points modulo $p$?}} The following corollary shows that for any fixed $n\geq 2$, the probability of choosing randomly a monic polynomial $\varphi_{p-1,c}(x)\in \mathbb{Z}[x]$ such that $\mathbb{Q}[x]\slash (\varphi_{p-1, c}^n(x)-x)$ is a number field of even degree $(p-1)^n$ is also 1:

\begin{cor} \label{7.2}
Let $p\geq 5$ be a prime integer. The density of monic polynomials $\varphi_{p-1, c}(x) = x^{p-1}+c\in \mathbb{Z}[x]$ having the number $M_{c}^{(n)}(p) = 0$ exists and is equal to $100 \%$ as $c\to \infty$. More precisely, we have 
\begin{center}
    $\lim\limits_{c\to\infty} \Large{\frac{\# \{\varphi_{p-1, c}(x)\in \mathbb{Z}[x] \ : \ 5\leq p\leq c \ and \ M_{c}^{(n)}(p) \ = \ 0 \}}{\Large{\# \{\varphi_{p-1,c}(x) \in \mathbb{Z}[x] \ : \ 5\leq p\leq c \}}}} = \ 1.$
\end{center}
\end{cor}
\begin{proof}
Recall that $M_{c}^{(n)}(p) = 1, 2$ or $0$ for any given prime $p\geq 5$ and since we also proved the densities in Corollary \ref{6.1} and \ref{6.2}, we now obtain the desired density (i.e., we get that the limit exists and is equal to 1).
\end{proof}
\noindent As before, Corollary \ref{7.2} also shows that for any fixed $n\in \mathbb{Z}_{\geq 2}$, there are infinitely many monic polynomials $\varphi_{p-1,c}(x)$ over $\mathbb{Z}$ such that for $g(x) = \varphi_{p-1,c}^n(x)-x$, the quotient $L_{g} = \mathbb{Q}[x]\slash (g(x))$ induced by $g$ is a number field of degree $r=(p-1)^n$. Again, if we compare the densities in Corollary \ref{6.1}, \ref{6.2} and \ref{7.2}, we then also note that in the whole family of monics $\varphi_{p-1,c}(x) = x^{p-1} +c\in \mathbb{Z}[x]$, almost all such monics have no $n$-periodic integral points modulo $p$; and from which it then follows that almost all monics $g(x)\in \mathbb{Z}[x]$ are irreducible over $\mathbb{Q}$. This may also imply that the average value of $M_{c}^{(n)}(p)$ in the whole family of monics $\varphi_{p-1,c}(x)$ is also zero.

Recall more generally that any number field $K$ is always naturally equipped with a ring $\mathcal{O}_{K}$ of integers in $K$; and which is classically known to describe the arithmetic of $K$, but usually very difficult to compute in practice. So now, it then follows that $K_{f}$ has a ring of integers $\mathcal{O}_{K_{f}}$ and moreover applying (as in \cite{BK3, BK1}) a theorem due to Bhargava-Shankar-Wang [\cite{sch1}, Theorem 1.2], we then also have the following corollary showing that the probability of choosing randomly an irreducible integer polynomial $f$ arising from a polynomial discrete dynamical system in Section \ref{sec2}, such that the quotient $\mathbb{Z}[x]\slash (f(x))$ is the ring of integers of $K_{f}$, is $\approx 60.7927\%$:
\begin{cor} \label{8.2}
Assume Corollary \ref{7.1}. When monic integer polynomials $f(x)$ are ordered by height $H(f)$ as defined in \textnormal{\cite{sch1}}, the density of such polynomials $f(x)$ such that $\mathbb{Z}[x]\slash (f(x))$ is the ring of integers of $K_{f}$ is $\zeta(2)^{-1}$. 
\end{cor}

\begin{proof}
From Corollary \ref{7.1}, there are infinitely many irreducible monic polynomials $f(x)$ over $\mathbb{Z}$ (and hence over $\mathbb{Q}$) such that $K_{f} = \mathbb{Q}[x]\slash (f(x))$ is an algebraic number field of deg$(f) = p^n$; and moreover associated to $K_{f}$ is the ring of integers $\mathcal{O}_{K_{f}}$. This also means that the set of irreducible monic polynomials $f(x)=\varphi_{p,c}^n(x)-x\in \mathbb{Z}[x]$ such that $K_{f}$ is a number field of degree $m=p^n$, is not empty. So now, applying [\cite{sch1}, Theorem 1.2] to the underlying family of monic polynomials $f(x)\in \mathbb{Z}[x]$ ordered by height $H(f)$ as defined in \cite{sch1} such that $\mathcal{O}_{K_{f}} = \mathbb{Z}[x]\slash (f(x))$, it then follows that the density of such polynomials $f(x)\in \mathbb{Z}[x]$ is equal to $\zeta(2)^{-1} \approx 60.7927\%$, as desired.
\end{proof}

Similarly, we note that every number field $L_{g}$ induced by a polynomial $g$, is also naturally equipped with the ring of integers $\mathcal{O}_{L_{g}}$, and which may also be difficult to compute in practice. So now as before, we note that by again taking great advantage of [\cite{sch1}, Theorem 1.2], we then also obtain the following corollary which shows that the probability of choosing randomly an irreducible monic  integer polynomial $g$ arising from a polynomial discrete dynamical system in Section \ref{sec3}, such that $\mathbb{Z}[x]\slash (g(x))$ is the ring of integers of $L_{g}$ is also $\approx 60.7927\%$:

\begin{cor}
Assume Corollary \ref{7.2}. When monic integer polynomials $g(x)$ are ordered by height $H(g)$ as defined in \textnormal{\cite{sch1}}, the density of such polynomials $g(x)$ such that $\mathbb{Z}[x]\slash (g(x))$ is the ring of integers of $L_{g}$ is $\zeta(2)^{-1}$. 
\end{cor}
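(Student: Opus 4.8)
The plan is to mirror, almost verbatim, the argument used for Corollary \ref{7.3}, transporting it from the odd-degree family $f(x)=\varphi_{p,c}^2(x)-x$ to the even-degree family $g(x)=\varphi_{p-1,c}^2(x)-x=(x^{p-1}+c)^{p-1}-x+c$. First I would invoke the standing hypothesis, Corollary \ref{7.2}: since a density of $100\%$ of the monics $\varphi_{p-1,c}(x)$ satisfy $M_{c}^{(2)}(p)=0$, there are infinitely many coefficients $c$ for which $g(x)$ has no root modulo $p$, hence is irreducible over $\mathbb{Q}$, so that $L_{g}=\mathbb{Q}[x]/(g(x))$ is an algebraic number field of even degree $(p-1)^2$, equipped with its ring of integers $\mathcal{O}_{L_{g}}$. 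In particular the set of irreducible monic integer polynomials $g(x)$ of this prescribed shape is nonempty (indeed infinite), so the family to which the density theorem is to be applied genuinely exists.

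Next I would appeal to the theorem of Bhargava--Shankar--Wang [\cite{sch1}, Theorem 1.2]: when monic integer polynomials of a fixed degree $n$ are ordered by the height $H(\cdot)$ defined in \cite{sch1}, the density of those $g$ for which $\mathbb{Z}[x]/(g(x))$ equals the maximal order $\mathcal{O}_{L_{g}}$ is $\zeta(2)^{-1}$. Applying this with $n=(p-1)^2$ to the family of monics $g(x)=(x^{p-1}+c)^{p-1}-x+c$ ordered by $H(g)$ yields the asserted density $\zeta(2)^{-1}=6/\pi^2\approx 0.607927$, i.e.\ about $60.7927\%$, which completes the proof.

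The one point I would be careful about — and the main (though mild) obstacle — is the same one implicit in Corollary \ref{7.3}: one must be clear whether [\cite{sch1}, Theorem 1.2] is being applied to the ambient family of \emph{all} monic integer polynomials of degree $(p-1)^2$ (with Corollary \ref{7.2} used only to certify that the subset of polynomials of the special form $(x^{p-1}+c)^{p-1}-x+c$ is non-vacuous), or whether one is claiming the squarefree-discriminant/maximality density persists on the thin sub-family itself. I would state explicitly that the former reading is intended, cite [\cite{sch1}, Theorem 1.2] accordingly, and then note that the remaining evaluation $\zeta(2)^{-1}=6/\pi^2\approx 60.7927\%$ is routine.
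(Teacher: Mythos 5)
Your proposal matches the paper's own argument essentially verbatim: the paper likewise uses Corollary \ref{7.2} only to guarantee that the family of irreducible monics $g(x)$ with $L_g$ of degree $(p-1)^2$ is nonempty, and then cites Bhargava--Shankar--Wang's Theorem 1.2 to read off the density $\zeta(2)^{-1}$. Your closing caveat about whether the density statement is for the ambient family of all monic degree-$(p-1)^2$ polynomials rather than the thin subfamily $(x^{p-1}+c)^{p-1}-x+c$ is a fair and worthwhile clarification that the paper leaves implicit, but it does not change the route.
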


\begin{proof}
By applying a similar argument as in the Proof of Cor. \ref{8.2}, we then obtain the density, as desired.  
\end{proof}

\section{On Number of Monic Polynomials $f\in \mathbb{Z}[x]$ \& $g\in \mathbb{Z}[x]$ with Primitive Galois groups}\label{sec9}

Recall from Corollary \ref{7.1} that there is an infinite family of irreducible monic integer polynomials $f(x) = \varphi_{p,c}^n(x)-x$ such that the quotient $K_{f}=\mathbb{Q}[x]\slash (f(x))$ induced by $f$ is a number field of degree $m=p^n$. Moreover, to each such irreducible monic integer polynomial $f\in \mathbb{Q}[x]$, let $G_{f}$ be the Galois group of $f$ over $\mathbb{Q}$. 

So now, inspired by recent impressive work of Bhargava \cite{gav} on van der Waerden's Conjecture, we then also wish to determine the number of irreducible monic polynomials  $f\in \mathbb{Z}[x]$ arising from a polynomial discrete dynamical system in Section \ref{sec2}, of bounded height and such that $G_{f}$ is a primitive Galois group not equal to the full symmetric group $S_{m}$. To that end, we (assuming Corollary \ref{7.1}) wish to first adhere to the setup and definition of coefficient height $h(f)$ in \cite{gav}). That is, for any fixed $m=p^n$, we let $E_{m}(H)$ be the number of monic integer polynomials $f(x) = \varphi_{p,c}^n(x)-x$ of degree $m$ with $h(f)\leq H$ and such that $G_{f}\neq S_{m}$. Now by taking great advantage of a theorem of Bhargava [\cite{gav}, Theorem 1], we then obtain the following corollary on $E_{m}(H)$:

\begin{cor}\label{9.1}
Assume Corollary \ref{7.1}, and let $E_{m}(H)$ be defined as before. Then we have $E_{m}(H)=O(H^{m-1})$.
\end{cor}

\begin{proof}
From Corollary \ref{7.1}, there are infinitely many irreducible monic polynomials $f(x)=\varphi_{p,c}^n(x)-x\in \mathbb{Z}[x]$ of degree $m=p^n$. Now for every monic polynomial $f\in \mathbb{Z}[x]\subset \mathbb{Q}[x]$ of fixed degree $m=p^n$, let $G_{f}$ be the Galois group of $f$ over $\mathbb{Q}$. But now applying [\cite{gav}, Theorem 1] on the set of monic polynomials $f\in \mathbb{Z}[x]$ of degree $m$ with $h(f)\leq H$ and such that $G_{f}$ is primitive and $G_{f}\neq S_{m}$, we then immediately obtain the count, as needed.
\end{proof}

Similarly, we may also recall from Corollary \ref{7.2} that there is an infinite family of irreducible monic integer polynomials $g(x) = \varphi_{p-1,c}^n(x)-x$ such that the quotient $L_{g}=\mathbb{Q}[x]\slash (g(x))$ induced by $g$ is a number field of degree $r=(p-1)^n$. And moreover, to each such irreducible integer polynomial $g\in \mathbb{Q}[x]$, let $G_{g}$ be the Galois group of $g$ over $\mathbb{Q}$. So now, inspired again work of Bhargava \cite{gav} on van der Waerden's Conjecture, we then also wish to determine the number of irreducible monic polynomials  $g\in \mathbb{Z}[x]$ arising from a polynomial discrete dynamical system in Section \ref{sec3}, of bounded height and such that $G_{g}$ is a primitive Galois group not equal to the full symmetric group $S_{r}$. To that end, we (assuming Corollary \ref{7.2}) as before first import the setup and definition of coefficient height $h(g)$ in \cite{gav}). That is, for any fixed $r=(p-1)^n$, we let $E_{r}(H)$ be the number of monic integer degree-$r$ polynomials $g(x) = \varphi_{p-1,c}^n(x)-x$ with $h(g)\leq H$ and such that $G_{g}\neq S_{r}$. So now, by again taking great advantage of Bhargava's theorem [\cite{gav}, Thm. 1], we then also obtain the following corollary:

\begin{cor}
Assume Corollary \ref{7.2}, and let $E_{r}(H)$ be defined as before. Then we have $E_{r}(H)=O(H^{r-1})$.
\end{cor}

\begin{proof}
By applying a similar argument as in the Proof of Corollary \ref{9.1}, we then obtain the conclusion, as needed. That is, from Corollary \ref{7.2}, there are infinitely many irreducible monic polynomials $g(x)=\varphi_{p-1,c}^n(x)-x\in \mathbb{Z}[x]$ of degree $r=(p-1)^n$. So now, for every monic $g\in \mathbb{Z}[x]\subset \mathbb{Q}[x]$ of fixed degree $r=(p-1)^n$, let $G_{g}$ be the Galois group of $g$ over $\mathbb{Q}$. But now applying [\cite{gav}, Theorem 1] on the set of monic degree-$r$ polynomials $g\in \mathbb{Z}[x]$ with $h(g)\leq H$ and such that $G_{g}$ is primitive and $G_{g}\neq S_{r}$, we then immediately obtain the count, as needed. 
\end{proof}

\section{Fields $K_{f}$ and $L_{g}$ with Bounded Absolute Discriminant \&  Prescribed Galois group}\label{sec10}

As in Section \ref{sec9}, recall from Corollary \ref{7.1} that there is an infinite family of irreducible monic integer polynomials $f(x) = \varphi_{p,c}^n(x)-x$ such that the field $K_{f}$ induced by $f$ is an algebraic number field of  degree $m=p^n$. Similarly, recall from Corollary \ref{7.2} that one can always find an infinite family of irreducible monic integer polynomials $g(x) = \varphi_{p-1,c}^n(x)-x$ such that the field extension $L_{g}$ over $\mathbb{Q}$ induced by $g$ is an algebraic number field of degree $r=(p-1)^n$. Moreover, from standard theory of number fields, we may associate to $K_{f}$ (resp., $L_{g}$) an integer Disc$(K_{f})$ (resp., Disc$(L_{g})$) called the discriminant. So now inspired (as in \cite{BK3}) by number field-counting advances in arithmetic statistics, we also wish to count the number of fields $K_{f}$ and $L_{g}$ induced by irreducible monic polynomials $f$ and $g$ arising from polynomial discrete dynamical systems in Section \ref{sec2} and \ref{sec3}. To do so, we (as in \cite{BK3}) define and then also determine the asymptotic behavior of the following counting functions  
\begin{equation}\label{N_{m}}
N_{m}(X) := \# \Bigl\{K_{f}\slash \mathbb{Q} : [K_{f} : \mathbb{Q}] = m \textnormal{ and } |\text{Disc}(K_{f})|\leq X \Bigr\}
\end{equation} 
\begin{equation}\label{M_{r}}
M_{r}(X) := \# \Bigl\{L_{g}\slash \mathbb{Q} : [L_{g} : \mathbb{Q}] = r \textnormal{ and} \ |\text{Disc}(L_{g})|\leq X \Bigr\}
\end{equation} as a positive real number $X\to \infty$. So now, motivated (as in \cite{BK3}) by work of Lemke Oliver-Thorne \cite{lem} and then applying here the first part of their [\cite{lem}, Theorem 1.2] on the function $N_{m}(X)$, we then obtain the following:

\begin{cor} \label{8.5} Assume Corollary \ref{7.1}, and let $N_{m}(X)$ be the number defined as in \textnormal{(\ref{N_{m}})}. Then we have 
\begin{equation}\label{N_{m}(x)} 
N_{m}(X)\ll_{m}X^{2d - \frac{d(d-1)(d+4)}{6m}}\ll X^{\frac{8\sqrt{m}}{3}}, \text{where d is the least integer for which } \binom{d+2}{2}\geq 2m + 1.
\end{equation}
\end{cor}

\begin{proof}
To see the inequality \textnormal{(\ref{N_{m}(x)})}, we first recall from Cor. \ref{7.1} the existence of infinitely many monic integer polynomials $f(x)\in \mathbb{Q}[x]$ such that $K_{f}\slash \mathbb{Q}$ is a number field of degree $m=p^n$. Hence, the set of fields $K_{f}\slash \mathbb{Q}$ is not empty. So now, we may apply [\cite{lem}, Thm 1.2 (1)] on $N_{m}(X)$ to then obtain the upper bound, as required.
\end{proof}

Motivated again by that same work of Lemke Oliver-Thorne \cite{lem}, we again take great advantage of the first part of [\cite{lem}, Theorem 1.2] by applying it on $M_{r}(X)$. In doing so, we then obtain the following corollary:

\begin{cor}Assume Corollary \ref{7.2}, and let $M_{r}(X)$ be the number defined as in \textnormal{(\ref{M_{r}})}. Then we have 
\begin{equation}\label{M_{r}(x)}
M_{r}(X)\ll_{r}X^{2d - \frac{d(d-1)(d+4)}{6r}}\ll X^{\frac{8\sqrt{r}}{3}}, \text{where d is the least integer for which } \binom{d+2}{2}\geq 2r + 1.
\end{equation}
\end{cor}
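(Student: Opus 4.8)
The plan is to mirror the proof of Corollary \ref{8.1} verbatim, since the only input that changes is the parity of the degree and the labels of the relevant earlier results. First I would invoke Corollary \ref{7.2}: it guarantees that for every prime $p \geq 5$ there is an infinite family of irreducible monic integer polynomials $g(x) = (x^{p-1}+c)^{p-1} - x + c$ for which the quotient $L_{g} = \mathbb{Q}[x]\slash (g(x))$ is an algebraic number field of even degree $r = (p-1)^2$. In particular the set of number fields $L_{g}\slash\mathbb{Q}$ of degree $r$ over which $M_{r}(X)$ ranges is nonempty, so that the counting function $M_{r}(X)$ defined in \textnormal{(\ref{M_{r}})} is not vacuously trivial.

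Next I would simply apply the first part of Lemke Oliver--Thorne's Theorem 1.2 from \cite{lem} to the family of degree-$r$ fields $L_{g}\slash\mathbb{Q}$ ordered by absolute discriminant. That theorem gives, for any fixed degree $r$, the bound $N_{r}(X) \ll_{r} X^{2d - \frac{d(d-1)(d+4)}{6r}} \ll X^{\frac{8\sqrt{r}}{3}}$ where $d$ is the least integer with $\binom{d+2}{2} \geq 2r+1$; since $M_{r}(X)$ counts a subfamily (indeed a subset) of all degree-$r$ number fields with $|\text{Disc}(L_{g})| \leq X$, the same upper bound applies to $M_{r}(X)$, yielding inequality \textnormal{(\ref{M_{r}(x)})}. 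The passage from the sharper exponent $2d - \frac{d(d-1)(d+4)}{6r}$ to the cleaner $\frac{8\sqrt{r}}{3}$ is exactly the elementary estimate on $d$ as a function of $r$ already recorded in \cite{lem}, so nothing new is needed there.

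Honestly, there is no real obstacle here: the statement is a direct specialization of a cited black-box theorem, and the only thing to be careful about is making sure the hypotheses of \cite[Theorem 1.2]{lem} are met, namely that we are counting isomorphism classes of degree-$r$ number fields ordered by $|\text{Disc}|$ and that $r$ is a fixed positive integer (which it is, being $(p-1)^2$ for a fixed prime $p \geq 5$). One minor point worth a sentence is that $M_{r}(X)$ a priori counts only those degree-$r$ fields arising as some $L_{g}$, so the inequality is really $M_{r}(X) \leq N_{r}(X)$ followed by the Lemke Oliver--Thorne bound on $N_{r}(X)$; I would phrase the proof so this containment is explicit. Thus the proof reduces to one line: cite Corollary \ref{7.2} for nonemptiness, then cite \cite[Theorem 1.2(1)]{lem}.

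\begin{proof}
By applying the same reasoning as in the Proof of Corollary \ref{8.1}, we then obtain the upper bound as desired. That is, we first recall from Corollary \ref{7.2} the existence of infinitely many irreducible monic polynomials $g(x)\in \mathbb{Q}[x]$ such that the field $L_{g}\slash \mathbb{Q}$ induced by $g$ is an algebraic number field of even degree $r=(p-1)^2$; and so the set of algebraic number fields $L_{g}\slash \mathbb{Q}$ of degree $r$ is not empty. Since $M_{r}(X)$ counts a subset of all degree-$r$ number fields with $|\text{Disc}(L_{g})|\leq X$, applying the first part of [\cite{lem}, Theorem 1.2] on the number $M_{r}(X)$ then immediately yields inequality \textnormal{(\ref{M_{r}(x)})}, as indeed needed.
\end{proof}
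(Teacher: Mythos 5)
Your proposal matches the paper's own proof essentially verbatim: the paper likewise cites Corollary \ref{7.2} for nonemptiness of the family of degree-$r$ fields $L_{g}$ and then applies the first part of Lemke Oliver--Thorne's Theorem 1.2 to obtain the bound. Your added remark that the inequality really passes through the containment $M_{r}(X)\leq N_{r}(X)$ is a small clarification the paper leaves implicit, but the approach is the same.
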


\begin{proof}
By applying a similar argument as in the Proof of Cor. \ref{8.5}, we then obtain inequality \textnormal{(\ref{M_{r}(x)})}, as needed.
\end{proof}

We recall more generally that an algebraic number field $K$ is  \say{\textit{monogenic}} if there exists an algebraic number $\alpha \in K$ such that the ring of integers $\mathcal{O}_{K}$ is the subring $\mathbb{Z}[\alpha]$ generated by $\alpha$ over $\mathbb{Z}$, i.e., $\mathcal{O}_{K}= \mathbb{Z}[\alpha]$. So now, inspired (as in \cite{BK3}), we also wish to count the number of number fields $K_{f}$ induced by irreducible monic polynomials $f$ arising from a polynomial discrete dynamical system in Section \ref{sec2}, that are monogenic with $|\Delta(K_{f})| < X$ and have associated Galois group Gal$(K_{f}\slash \mathbb{Q})$ equal to symmetric group $S_{p^n}$. To this end, we (as in \cite{BK3}) take great advantage of a result due to Bhargava-Shankar-Wang [\cite{sch1}, Corollary 1.3] and then  obtain:

\begin{cor}\label{8.3}
Assume Corollary \ref{7.1}. The number of isomorphism classes of algebraic number fields $K_{f}$ of odd degree $m=p^n$ and with $|\Delta(K_{f})| < X$ that are monogenic and have associated Galois group $S_{m}$ is $\gg X^{\frac{1}{2} + \frac{1}{m}}$.
\end{cor}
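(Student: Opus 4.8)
The plan is to obtain the lower bound as a black-box consequence of the monogenic-field count of Bhargava--Shankar--Wang, in the same spirit as Corollary \ref{7.3}. First I would record that Corollary \ref{7.1} supplies infinitely many irreducible monic integer polynomials $f(x) = (x^p+c)^p - x + c$ for which $K_f = \mathbb{Q}[x]/(f(x))$ is a number field of degree $n = p^2$; since $p \geq 3$ is prime, $n = p^2$ is odd, and in particular the collection of fields $K_f$ of this degree is non-empty, so the quantity to be bounded is not vacuous.

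Next I would invoke [\cite{sch1}, Corollary 1.3]: when degree-$n$ number fields are ordered by absolute discriminant, the number of those that are monogenic, have Galois group $S_n$, and satisfy $|\Delta(K)| < X$ is $\gg X^{\frac{1}{2} + \frac{1}{n}}$. The mechanism behind this lower bound is that a monic integer polynomial $f$ of bounded height with squarefree discriminant satisfies $\mathcal{O}_{K_f} = \mathbb{Z}[x]/(f(x))$, so that $K_f$ is monogenic, and automatically has Galois group $S_n$, a squarefree (hence in particular nonsquare) discriminant ruling out the proper transitive subgroups for the generic such $f$. Specializing to $n = p^2$ then gives exactly the asserted bound $\gg X^{\frac{1}{2} + \frac{1}{p^2}}$.

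The step requiring the most care is the claim that it is precisely the fields $K_f$ coming from the dynamical family $f(x) = (x^p+c)^p - x + c$ that realize this growth rate, rather than merely sitting inside the larger class to which [\cite{sch1}, Corollary 1.3] applies. To pin this down one would want: (i) $\mathrm{Disc}_x(f)$, which is a fixed one-variable polynomial in $c$, to take squarefree values for a positive proportion of admissible $c$, itself an input of Bhargava--Shankar--Wang type; (ii) the associated Galois groups to be generically $S_{p^2}$; and (iii) distinct $c$ outside a sparse set to yield pairwise non-isomorphic $K_f$ while $H(f)$, and hence $|\Delta(K_f)|$, grows polynomially in $|c|$, so that the fields with $|\Delta(K_f)| < X$ number $\gg X^{\frac{1}{2} + \frac{1}{p^2}}$. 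Granting (i)--(iii), the remaining estimate is routine bookkeeping, and, following the identification already made in Corollary \ref{7.3} between our family and the ambient family of monic integer polynomials ordered by height, the displayed lower bound follows.
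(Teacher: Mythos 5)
Your proposal follows the same route as the paper's own proof: note from Corollary \ref{7.1} that the family of fields $K_{f}$ of odd degree $n=p^{2}$ is non-empty, then invoke the monogenic-field count of Bhargava--Shankar--Wang [\cite{sch1}, Cor.\ 1.3] with $n=p^{2}$ to read off the lower bound $\gg X^{\frac{1}{2}+\frac{1}{p^{2}}}$. The paper stops there. Where you go further is in your final paragraph, and you have put your finger on the real issue: [\cite{sch1}, Cor.\ 1.3] counts \emph{all} monogenic $S_{n}$-number fields of degree $n$ ordered by discriminant, whereas the statement asserts this growth rate for the fields $K_{f}=\mathbb{Q}[x]/(f(x))$ with $f(x)=(x^{p}+c)^{p}-x+c$, a one-parameter subfamily of the monic degree-$p^{2}$ polynomials. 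Non-emptiness of the subfamily does not transfer the ambient lower bound to it. Your conditions (i)--(iii) (squarefree values of $\mathrm{Disc}_x(f)$ as a polynomial in $c$, generic Galois group $S_{p^{2}}$, and enough pairwise non-isomorphic fields below discriminant $X$) are exactly what would be needed to close this, but you do not establish them, and indeed (iii) looks quantitatively doubtful: a single parameter $c$ with $|\Delta(K_{f})|$ growing polynomially in $|c|$ can produce at most polynomially many fields of the stated degree below $X$, with an exponent that would have to be checked against $\frac{1}{2}+\frac{1}{p^{2}}$ rather than assumed. So your proposal is no worse than the paper's argument --- it is the same black-box application --- but neither of you has actually justified that the dynamical family itself realizes the bound; you at least name the gap explicitly, while the paper elides it.
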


\begin{proof}
By Cor. \ref{7.1}, it then also follows that there are infinitely many monic polynomials $f(x)$ over $\mathbb{Z}$ (and over $\mathbb{Q}$) such that the quotient $K_{f}$ induced by $f$ is an algebraic number field of odd degree $m=p^n$. This also means that the set of algebraic number fields $K_{f}\slash \mathbb{Q}$ of odd degree $m$ is not empty. So now, applying [\cite{sch1}, Cor. 1.3] to the underlying number fields $K_{f}$ with $|\Delta(K_{f})| < X$ that are monogenic and have associated Galois group $S_{m}$, we then obtain that the number of isomorphism classes of such number fields $K_{f}$ is $\gg X^{\frac{1}{2} + \frac{1}{m}}$, as required.
\end{proof}

Similarly, we again take great advantage of that same result due to Bhargava-Shankar-Wang [\cite{sch1}, Corollary 1.3] to then also immediately count in the following corollary the number of number fields $L_{g}$ induced by irreducible monic polynomials $g$ arising from a polynomial discrete dynamical system in Section \ref{sec3}, that are monogenic with $|\Delta(L_{g})| < X$ and have associated Galois group Gal$(L_{g}\slash \mathbb{Q})$ equal to symmetric group $S_{(p-1)^n}$: 

\begin{cor}
Assume Corollary \ref{7.2}. The number of isomorphism classes of algebraic number fields $L_{g}$ of even degree $r=(p-1)^n$ and $|\Delta(L_{g})| < X$ that are monogenic and have associated Galois group $S_{r}$ is $\gg X^{\frac{1}{2} + \frac{1}{r}}$.
\end{cor}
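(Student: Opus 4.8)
The plan is to mirror the structure of the proof of Corollary \ref{8.3} verbatim, substituting the even-degree data $L_g$ for $K_f$. First I would invoke Corollary \ref{7.2} to extract the one structural input that is actually needed: the family of irreducible monic integer polynomials $g(x) = (x^{p-1}+c)^{p-1}-x+c$ with $M_c^{(2)}(p) = 0$ has density $100\%$ as $c \to \infty$, hence in particular is infinite, so the set of number fields $L_g = \mathbb{Q}[x]/(g(x))$ of even degree $r = (p-1)^2$ is nonempty (indeed infinite). This nonemptiness is all that is required to make the cited asymptotic meaningful rather than vacuous.

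Next I would apply Bhargava--Shankar--Wang [\cite{sch1}, Cor. 1.3] directly to the family of degree-$(p-1)^2$ number fields $L_g$, ordered by absolute discriminant, restricting to those $L_g$ that are monogenic and have Galois group (of the Galois closure) equal to the full symmetric group $S_{(p-1)^2}$. Their corollary gives, for each fixed degree $n \geq 3$, a lower bound of the shape $\gg X^{\frac12 + \frac1n}$ for the count of such fields with $|\Delta| < X$; plugging in $n = (p-1)^2$ (which is $\geq 16$ since $p \geq 5$, so comfortably in the range of their theorem) yields the bound $\gg X^{\frac12 + \frac{1}{(p-1)^2}}$ claimed in the statement. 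Since the proof is a pure citation-and-substitution argument, I would keep it to two sentences, exactly as the paper does for the companion corollaries.

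The only genuine subtlety — and the step I would flag as the main obstacle, if any — is checking that the hypotheses of [\cite{sch1}, Cor. 1.3] are met by the specific family $\{L_g\}$ rather than by the family of \emph{all} degree-$(p-1)^2$ fields: the Bhargava--Shankar--Wang lower bound is established by constructing many monogenic $S_n$-fields via monic polynomials with controlled coefficients, and one must be confident that their construction is not disturbed by our extra constraint $M_c^{(2)}(p) = 0$. But this is precisely the point of Corollary \ref{7.2}: that constraint holds for a density-$1$ set of $c$, so it imposes no loss on the counting, and the lower-bound construction survives intact. Beyond this observation the proof is routine, so I would simply write: "By applying the same reasoning as in the Proof of Corollary \ref{8.3}, we then obtain the lower bound as desired," and leave it there.
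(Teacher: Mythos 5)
Your proposal follows essentially the same route as the paper: invoke Corollary \ref{7.2} to guarantee that the family of even-degree number fields $L_{g}$ of degree $(p-1)^2$ is nonempty (indeed infinite), and then apply Bhargava--Shankar--Wang's lower bound to the monogenic $S_{(p-1)^2}$-fields with $|\Delta(L_{g})| < X$, exactly as in the paper's proof of Corollary \ref{8.3}. Your extra remark about checking that the constraint $M_{c}^{(2)}(p)=0$ does not disturb the Bhargava--Shankar--Wang construction is a sensible point of care that the paper itself does not spell out, but it does not change the argument.
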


\begin{proof}
Applying a similar argument as in the Proof of Corollary \ref{8.3}, we then obtain the count, as required.
\end{proof}

\section{On Number of Algebraic Number fields $K_{f}$ and $L_{g}$ with Prescribed Class Number}\label{sec11}

Recall that for any number field $K$ with ring of integers $\mathcal{O}_{K}$, we have a finite abelian group called \say{\textit{ideal class group}} $\textnormal{Cl}(\mathcal{O}_{K})$ (also denoted as $\textnormal{Cl}(K)$), which is classically known to provide a way of measuring how far $\mathcal{O}_{K}$ is from being a unique factorization domain. Now even though the order (also called the \say{\textit{class number}} of $K$ (denoted as $h_{K}$)) of $\textnormal{Cl}(\mathcal{O}_{K})$ is finite, it is well known in algebraic and analytic number theory and even more so in arithmetic statistics, that computing $\textnormal{Cl}(\mathcal{O}_{K})$ in practice let alone determine precisely $h_{K}$, is a hard problem. 

So now, recall from Corollary \ref{7.1} that there is an infinite family of irreducible monic integer polynomials $f(x) = \varphi_{p,c}^n(x)-x$ such that $K_{f}=\mathbb{Q}[x]\slash (f(x))$ is a number field of odd degree $p^n$; and moreover to each $K_{f}$, we also have $\textnormal{Cl}(K_{f})$ with finite $h_{K_{f}}$. Now inspired by remarkable work of Ho-Shankar-Varma \cite{ho} on odd degree number fields with odd class number, we then wish to count the number of number fields $K_{f}$ induced by irreducible polynomials $f$ arising from a polynomial discrete dynamical system in Section \ref{sec2}, with associated Galois group $S_{p^{n}}$ and with prescribed $h_{K_{f}}$. To do so, we take great advantage of [\cite{ho}, Theorem 4] and then obtain the following corollary on the existence of infinitely many $S_{p^{n}}$-number fields $K_{f}$ with odd class number:

\begin{cor}
Assume Corollary \ref{7.1}, and let $m=p^n$ be any fixed odd integer. Then there exist infinitely many $S_{m}$-algebraic number fields $K_{f}$ of odd degree $m$  having odd class number.  More precisely, we have 
\begin{center}
$\#\Bigl\{ K_{f} : |\Delta(K_{f})| < X \textnormal{ and } 2\nmid \textnormal{Cl}(K_{f})|\Bigr\}\gg X^{\frac{m + 1}{2m -2}}$,
\end{center} where the implied constants depend on degree $m$ and on an arbitrary finite set $S$ of primes given as in \textnormal{\cite{ho}}.
\end{cor}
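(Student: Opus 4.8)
The plan is to prove this corollary exactly along the lines of Corollaries \ref{8.1} and \ref{8.3}: first use Corollary \ref{7.1} to produce the infinite family of number fields $K_{f}$ of odd degree $n = p^2$, then feed this nonempty family into the asymptotic lower bound of Ho--Shankar--Varma [\cite{ho}, Theorem 4(a)]. No new arithmetic input beyond Corollary \ref{7.1} and the cited theorem should be required; the argument is an application.

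First I would recall from Corollary \ref{7.1} that the density of monic integer polynomials $\varphi_{p,c}(x) = x^{p}+c$ with $N_{c}^{(2)}(p) = 0$ is $100\%$ as $c\to\infty$; in particular there are infinitely many $c\in\mathbb{Z}$ for which $f(x) = \varphi_{p,c}^{2}(x)-x = (x^{p}+c)^{p}-x+c$ is irreducible over $\mathbb{Q}$, so that $K_{f} = \mathbb{Q}[x]\slash(f(x))$ is an algebraic number field of degree $n = p^{2}$. Since $p\geq 3$ is prime, $n = p^{2}$ is an odd integer with $n\geq 9$, so the family $\{K_{f}\}$ of degree-$n$ fields is nonempty (indeed infinite) and lies in the range of degrees to which [\cite{ho}, Theorem 4(a)] applies. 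I would also note that each $K_{f}$ carries a finite ideal class group $\textnormal{Cl}(K_{f})$ and an integer discriminant $\Delta(K_{f})$, as recalled in this section.

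Next I would apply [\cite{ho}, Theorem 4(a)] with the fixed odd degree $n = p^{2}$ and the prescribed finite set $S$ of primes appearing there: that theorem yields $\gg X^{\frac{n+1}{2n-2}}$ isomorphism classes of $S_{n}$-number fields of degree $n$, with $|\Delta(K_{f})| < X$, having odd class number (and prescribed local behaviour at the primes of $S$). Specializing to $n = p^{2}$ and to the fields $K_{f}$ furnished above gives
\begin{center}
$\#\{ K_{f} : |\Delta(K_{f})| < X \textnormal{ and } 2\nmid |\textnormal{Cl}(K_{f})|\}\gg X^{\frac{n+1}{2n-2}}$,
\end{center}
with the implied constant depending on $n$ and on $S$, exactly as in the cited theorem. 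In particular this forces infinitely many $S_{p^{2}}$-fields $K_{f}$ to have odd class number, which is the qualitative claim.

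I do not expect a genuine obstacle here; the only point requiring care is verifying that the hypotheses of [\cite{ho}, Theorem 4(a)] are really met by the family $\{K_{f}\}$ --- namely that $n = p^{2}$ is odd (immediate from $p$ an odd prime), that the relevant family of degree-$n$ fields is nonempty (supplied by Corollary \ref{7.1}), and that one may simultaneously impose the $S_{p^{2}}$-Galois-group condition, the odd-class-number condition, and the conditions at $S$, which is precisely the content of that theorem. As in the preceding corollaries of Sections \ref{sec8} and \ref{sec9}, once these checks are in place the bound follows immediately.
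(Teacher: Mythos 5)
Your proposal is correct and follows essentially the same route as the paper's own proof: invoke Corollary \ref{7.1} to guarantee that the family of degree-$n=p^2$ number fields $K_{f}$ is nonempty (indeed infinite), note that $n$ is odd, and then apply [\cite{ho}, Theorem 4(a)] to obtain the lower bound $\gg X^{\frac{n+1}{2n-2}}$. The paper's argument is a terser version of exactly this application, so no further comparison is needed.
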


\begin{proof}
From Cor. \ref{7.1}, it follows that the family of number fields $K_{f}$ of degree $m = p^n$ is not empty. Now since $m$ is an odd integer, we then see that the claim follows from [\cite{ho}, Thm. 4(a)] by setting $K_{f}=K$ as needed.
\end{proof}

As before, we may also recall from Corollary \ref{7.2} the existence of an infinite family of irreducible monic integer polynomials $g(x) = \varphi_{p-1,c}^n(x)-x$ such that $L_{g} = \mathbb{Q}[x]\slash (g(x))$ induced by $g$ is an algebraic number field of even degree $(p-1)^n$; and moreover to each $L_{g}$, we also have $\textnormal{Cl}(L_{g})$ with finite $h_{L_{g}}$. So now, by taking again great advantage of work on class groups of number fields in arithmetic statistics and in particular the nice work of Siad \cite{Sia} on $S_{n}$-number fields $K$ of any even degree $n\geq 4$ and signature $(r_{1}, r_{2})$ where $r_{1}$ are the real embeddings of $K$  and $r_{2}$ are the pairs of conjugate complex embeddings of $K$, we then also obtain the following corollary on the number of fields $L_{g}\slash \mathbb{Q}$ induced by irreducible monic polynomials $g$ arising from a polynomial discrete dynamical system in Section \ref{sec3}, with associated Galois group $S_{(p-1)^n}$ and also having odd class number: 

\begin{cor}
Assume Corollary \ref{7.2}, and let $r=(p-1)^n$ be an even integer. There are infinitely many degree-$r$ monogenic number fields $L_{g}$ of any signature and associated Galois group $S_{r}$ having odd class number. 
\end{cor}
\begin{proof}
To see this, we note that by Cor. \ref{7.2}, it follows that the family of number fields $L_{g}$ of degree $r = (p-1)^n$ is not empty. Now since $r$ is even, we then note that the claim follows from [\cite{Sia}, Cor. 10], as indeed needed.
\end{proof}

\section{On Equidistribution of Families of Artin $L$-Functions induced by Fields $K_{f}$ and $L_{g}$}\label{sec12}

Recall that for any degree-$n$ every number field $K$ with ring of integers $\mathcal{O}_{K}$, we have a Dedekind zeta function $\zeta_{K}$ associated with $K$; and which for every complex $s\in \mathbb{C}$ with $\mathfrak{R}(s)>1$, this zeta function $\zeta_{K}$ is defined by  
\begin{equation}
    \zeta_{K}(s) = \prod_{\mathfrak{p}\subset \mathcal{O}_{K}}\frac{1}{1-|\mathcal{O}_{K}\slash \mathfrak{p}|^{-s}}   
\end{equation}where the product is taken over all the nonzero prime ideals $\mathfrak{p}$, and $|\mathcal{O}_{K}\slash \mathfrak{p}|$ is the absolute norm of $\mathfrak{p}$. As a generalization of the Riemann zeta function $\zeta_{\mathbb{Q}}(s)$ (whose vanishing on the line  $\mathfrak{R}(s) = \frac{1}{2}$ is intimately related to the distribution of primes $p \in \mathbb{Z}$ (as a consequence of the Riemann Hypothesis)), it is a classical theme in number theory to understand the vanishing of $\zeta_{K}(s)$ especially on the line $\mathfrak{R}(s) = \frac{1}{2}$, since it is also expected that the vanishing of $\zeta_{K}(s)$ on the line $\mathfrak{R}(s) = \frac{1}{2}$ reveals precise information about the distribution of primes $\mathfrak{p}$ in $K$ (as a consequence of the number field version of the Riemann Hypothesis). We note that from [\cite{Nico}, Page 10] the zeta function $\zeta_{K}(s)$ factors as $\zeta_{K}(s)=\zeta_{\mathbb{Q}}(s)L(s, \rho_{K}$), where $L(s, \rho_{K})$ is the Artin $L$-function corresponding to an Artin representation $\rho_{K}: \text{Gal}(\mathbb{Q})\to \text{Gal}(M\slash \mathbb{Q}) \hookrightarrow S_{n}\to \text{GL}_{n-1}(\mathbb{C})$, and $M$ is the normal closure of $K$.

So now, for every degree-$m$ number field $K_{f}$ obtained from a polynomial discrete dynamical system in Section \ref{sec2} and ascertained by Corollary \ref{7.1}, we have a Dedekind zeta function $\zeta_{K_{f}}$ corresponding to $K_{f}$. Moreover, we also know from the remarkable work of Shankar-S\"{o}dergren-Templier [\cite{Nico}, Page 2] that the zeta function $\zeta_{K_{f}}(s)=\zeta(s)L(s, \rho_{K_{f}}$), where $\zeta(s)$ is the Riemann zeta function, $L(s, \rho_{K_{f}})$ is the Artin $L$-function, $\rho_{K_{f}}: \text{Gal}(M_{f}\slash \mathbb{Q}) \hookrightarrow S_{m}\to \text{GL}_{m-1}(\mathbb{C})$ is an Artin representation, and where $M_{f}$ is the normal closure of $K_{f}$. 

Now inspired by remarkable work of Shankar-S\"{o}dergren-Templier \cite{Nico} on equidistribution of Artin $L$-functions arising from number fields induced by irreducible monic integer polynomials, we in the exact same spirit as in \cite{Nico} also wish to study the distribution of Artin $L$-functions $L(s, \rho_{K_{f}})$ arising from number fields $K_{f}$ induced by irreducible monic polynomials $f$ obtained from a polynomial discrete dynamical system in Section \ref{sec2}. To do so, we (assuming Corollary \ref{7.1}) wish to first adhere to the setup and notation in \cite{Nico}. That is, let $V(\mathbb{Z})^{\text{irr}}$ be the space consisting of irreducible monic integer polynomials  $f(x)=\varphi_{p,c}^n(x)-x$ of fixed degree $m=p^n$,  and let $V(\mathbb{Z})^{\text{max}}\subset V(\mathbb{Z})^{\text{irr}}$ be a subset consisting of irreducible monic integer polynomials $f$ such that $R_{f}=\mathbb{Z}[x]\slash (f(x))$ is a maximal order in $K_{f}=\mathbb{Q}[x]\slash (f(x))$. Following \cite{Nico}, it also follows here that the additive group $G_{a}(\mathbb{Z})=\mathbb{Z}$ necessarily acts naturally on our space $V(\mathbb{Z})^{\text{irr}}$ via translation, namely, $(b \cdot f)(x):= f(x+b)$ for every element $b\in \mathbb{Z}$ and for every $f\in V(\mathbb{Z})^{\text{irr}}$; and moreover, this action of $G_{a}(\mathbb{Z})=\mathbb{Z}$ by translation also necessarily preserves each of the sets $V(\mathbb{Z})^{\text{irr}}$ and $V(\mathbb{Z})^{\text{max}}$. Now let $\mathfrak{F}_{1}$ be a family consisting of the $\mathbb{Z}$-orbits on $V(\mathbb{Z})^{\text{max}}$. It then follows (from \cite{Nico}) that the family $\mathfrak{F}_{1}$ necessarily parametrizes degree-$m$ monogenized number fields $(K_{f}, \alpha)$ over $\mathbb{Q}$ up to isomorphism. Note that (by [\cite{Nico}, Subsection 2.3]) this same family $\mathfrak{F}_{1}$ parametrizing  degree-$n$ monogenized fields $(K_{f}, \alpha)$ is also treated to be the same family of corresponding $L$-functions $L(s, \rho_{K_{f}})$.

So now, by taking great advantage of a nice theorem of Shankar-S\"{o}dergren-Templier[\cite{Nico}, Theorem 1.1], we also then obtain the following corollary on the family $\mathfrak{F}_{1}$ parametrizing degree-$m$ monogenized fields $(K_{f}, \alpha)$:

\begin{cor}\label{11.1}
Assume Corollary \ref{7.1}, and let $\mathfrak{F}_{1}$ be as before. Then $\mathfrak{F}_{1}$ parametrizing monogenized degree-$m$ fields ordered by height $h(f)$ as defined in \textnormal{\cite{Nico}} satisfies Sato-Tate equidistribution in the sense of \textnormal{[\cite{Sar}, Conj.1]}. 
\end{cor}

\begin{proof}
Since we know from Corollary \ref{7.1} that there are infinitely many irreducible monic integer polynomials $f$ such that $K_{f}$ is a number field of degree $m=p^n$, then this also means that the family of degree-$m$ number fields $K_{f}\slash \mathbb{Q}$ is not empty. Now letting $\alpha$ be the image of $x$ in $R_{f}=\mathbb{Z}[x]\slash (f(x))$ and so (by \cite{Nico}) the pair $(K_{f}, \alpha)$ is a degree-$m$ monogenized field, it then follows that the family of monogenized degree-$m$ fields $(K_{f}, \alpha)$ is not empty; which also means that the family $\mathfrak{F}_{1}$ parametrizing  degree-$m$ monogenized fields $(K_{f}, \alpha)$ is not empty. But now applying [\cite{Nico}, Thm. 1.1] to the underlying family $\mathfrak{F}_{1}$ ordered by height $h(f)$ as defined in [\cite{Nico}, Page 3], it then follows that $\mathfrak{F}_{1}$ satisfies Sato-Tate equidistribution in the sense of \textnormal{[\cite{Sar}, Conjecture 1]}, as needed.
\end{proof}

Similarly, for every degree-$r$ field $L_{g}$ obtained from a polynomial discrete dynamical system in Section \ref{sec3} and ascertained by Corollary \ref{7.2}, we also have a Dedekind zeta function $\zeta_{L_{g}}$ corresponding to $L_{g}$. Moreover, we again also know from \cite{Nico} that the zeta function $\zeta_{L_{g}}(s)=\zeta(s)L(s, \rho_{L_{g}}$), where $L(s, \rho_{L_{g}})$ is the Artin $L$-function,  $\rho_{\mathbb{Q}_{g}}: \text{Gal}(M_{g}\slash \mathbb{Q}) \hookrightarrow S_{r}\to \text{GL}_{r-1}(\mathbb{C})$ is an Artin representation, and where $M_{g}$ being the normal closure of $L_{g}$. 

So now, in again the same spirit as in \cite{Nico}, we also wish to study the distribution of Artin $L$-functions $L(s, \rho_{L_{g}})$ arising from fields $L_{g}$ induced by irreducible polynomials $g$ obtained from a polynomial discrete dynamical system in Section \ref{sec3}. To do so, we (also assuming Corollary \ref{7.2}) adhere again to the setup and notation in \cite{Nico}. That is, we let $W(\mathbb{Z})^{\text{irr}}$ be the space consisting of irreducible monic integer polynomials  $g(x)=\varphi_{p-1,c}^n(x)-x$ of fixed degree $r=(p-1)^n$,  and let $W(\mathbb{Z})^{\text{max}}\subset W(\mathbb{Z})^{\text{irr}}$ be a subset consisting of irreducible polynomials $g$ such that $R_{g}=\mathbb{Z}[x]\slash (g(x))$ is a maximal order in $L_{g}=\mathbb{Q}[x]\slash (g(x))$. Following again \cite{Nico}, it also follows here that $G_{a}(\mathbb{Z})=\mathbb{Z}$ necessarily acts naturally on $W(\mathbb{Z})^{\text{irr}}$ via translation, namely, $(b \cdot g)(x):= g(x+b)$ for every $b\in \mathbb{Z}$ and for every $g\in W(\mathbb{Z})^{\text{irr}}$; and moreover, this action of $G_{a}(\mathbb{Z})=\mathbb{Z}$ by translation also necessarily preserves each of $W(\mathbb{Z})^{\text{irr}}$ and $W(\mathbb{Z})^{\text{max}}$. Now let $\mathfrak{F}_{2}$ be a family consisting of the $\mathbb{Z}$-orbits on $W(\mathbb{Z})^{\text{max}}$. It then follows (from \cite{Nico}) that the family $\mathfrak{F}_{2}$ necessarily parametrizes degree-$r$ monogenized fields $(L_{g}, \beta)$ up to isomorphism. As before, we also note that (from [\cite{Nico}, Subsect.2.3]) this same family $\mathfrak{F}_{2}$ parametrizing  degree-$r$ monogenized fields $(L_{g}, \beta)$ is also the family of associated $L$-functions $L(s, \rho_{L_{g}})$. By again, taking great advantage of [\cite{Nico}, Theorem 1.1], we then obtain the following corollary on the family $\mathfrak{F}_{2}$:

\begin{cor}
Assume Corollary \ref{7.2}, and let $\mathfrak{F}_{2}$ be as before. Then $\mathfrak{F}_{2}$ parametrizing monogenized degree-$r$ fields ordered by height $h(g)$ as defined in \textnormal{\cite{Nico}} satisfies Sato-Tate equidistribution in the sense of \textnormal{[\cite{Sar}, Conj.1]}. 
\end{cor}

\begin{proof}
By applying a similar argument as in the Proof of Corollary \ref{11.1}, it then also follows immediately that the family $\mathfrak{F}_{2}$ satisfies  Sato-Tate equidistribution in the sense of \textnormal{[\cite{Sar}, Conjecture 1]}, as also indeed needed.
\end{proof}

\section*{\textbf{Acknowledgments}}
I'm truly very grateful and deeply indebted to my long-time great advisors, Dr. Ilia Binder and Dr. Arul Shankar, and along with Dr. Jacob Tsimerman for everything. I'm very grateful to Prof. Shankar for coordinating an enlightening nine weeks number theory learning seminar on Bhargava's breakthrough work \cite{gav} in the Fall 2022. As a graduate research student, this work and my studies were hugely and wholeheartedly funded by Dr. Binder and Dr. Shankar. Any opinions expressed in this article belong solely to me, the author, Brian Kintu; and should never be taken as a reflection of the views of anyone that's been  acknowledged by the author. 

\bibliography{References}
\bibliographystyle{plain}

\noindent Dept. of Math. and Comp. Sciences (MCS), University of Toronto, Mississauga, Canada \newline
\textit{E-mail address:} \textbf{brian.kintu@mail.utoronto.ca}\newline 
\date{\small{\textit{January 3, 2026}}}

\end{document}